\newtheorem{thm}{Theorem}[section]
\newtheorem{pro}[thm]{Proposition}
\newtheorem{cor}[thm]{Corollary}
\newtheorem{exam}[thm]{Example}
\newtheorem{lemma}[thm]{Lemma}
\newtheorem{que}[thm]{Question}
\numberwithin{equation}{section}
\newcommand\redout{\bgroup\markoverwith{\textcolor{red}{\rule[0.5ex]{4pt}{0.8pt}}}\ULon}
\newcommand\stout[1]{\ifmmode\text{\redout{\ensuremath{#1}}}\else\redout{#1}\fi}
\begin{document}
\title{Spectral properties of a class of Sierpinski-type Moran measures on \( \mathbb{R}^n \)}
\author{Jia-Long Chen}
\author{Xiao-Yu Yan$^{*}$}
\address{ School of Mathematics, South China University of Technology, Guangzhou 510641, P.R. China.}
\address{Key Laboratory of Computing and Stochastic Mathematics (Ministry of Education), School of Mathematics and Statistics, Hunan Normal University, Changsha 410081, P.R. China.}
\email{jialongchen1@163.com(J.L. C.)}
\email{xyyan1103@163.com(X.Y. Y.)}
\date{\today}
\keywords{Orthogonal basis, Infinite convolution, Spectral measure, Fourier transform.}
\subjclass[2010]{Primary 28A80; Secondary 42C05, 46C05}
\thanks{*Corresponding author.}

\begin{abstract}
	Let the infinite convolutions
\begin{equation*}
	\mu_{\{R_{k}\},\{D_{k}\}}=\delta_{R_{1}^{-1}D_{1}}*\delta_{R_{1}^{-1}R_{2}^{-1}D_{2}}*\delta_{R_{1}^{-1}R_{2}^{-1}R_{3}^{-1}D_{3}}*\dotsi
\end{equation*} 
	be generated by the sequence of pairs \(\{ (R_k,D_k) \}_{k=1}^{\infty} \), where  $ R_k\in M_n(\mathbb{Z})$ is an expanding integer matric, $D_k$ is a finite integer  digit sets that satisfies the following two conditions: (i). \( \# D_k = m \)  and \(   m>2 \) is  a  prime; (ii). \( \{x: \sum_{d\in D_{k}}e^{2\pi i\langle d,x \rangle}=0\} =\cup_{i=1}^{\phi(k)}\cup_{j=1}^{m-1}(\frac{j}{m}\nu_{k,i}+\mathbb{Z}^{n}) \) for some \( \nu_{k,i} \in \{ (l_1, \cdots, l_n)^t : l_i \in [1, m-1] \cap \mathbb{Z}, 1\leq i\leq n \} \).  In this paper, we study the  spectrality  of $\mu_{\{R_{k}\},\{D_{k}\}}$, and some necessary and sufficient conditions for \( L^{2}(\mu_{\{R_{k}\},\{D_{k}\}}) \) to have an orthogonal exponential function basis are established. Finally, we discuss the explanations and applications of our results.

\end{abstract}
\maketitle 
\section{\bf  Introduction}
Let \( \mu \) be a Borel probability measure with compact support on \( \mathbb{R}^{n} \).
If there exists a countable set $\Lambda\subset\mathbb{R}^{n}$ such that the family of exponential functions $E_{\Lambda} = \{ e^{-2\pi i \langle \lambda, x \rangle} : \lambda \in \Lambda \}$ forms an orthonormal basis for $ L^{2}(\mu) $, where \(\langle \cdot, \cdot \rangle\) denotes the standard inner product on \(\mathbb{R}^{n}\), then \(\mu\) is called a spectral measure with spectrum \(\Lambda\). The pair \((\mu, \Lambda)\) is referred to as a spectral pair. For the special case that  a spectral measure is the Lebesgue measure on a measurable set $\Omega$,  we say that $\Omega$ is a spectral set. It is well-known from classical Fourier analysis that the unit cube $[0,1]^{n}$ is a spectral set with spectrum $\mathbb{Z}^{n}$. 

The spectral measure is a natural generalization of the spectral set introduced by Fuglede \cite{B1}, who proposed \textbf{Fuglede's conjecture}: \(\Omega \subset \mathbb{R}^n\) is a spectral set if and only if \(\Omega\) is a translational tile. The conjecture was finally disproved by Tao \cite{T1} and others \cite{KM1} in $\mathbb{R}^{n}$, $n\ge3$. Recently, Lev
and Matolcsi \cite{LM} showed that the spectral set conjecture holds in all dimensions for convex domains. Generally speaking, a probability measure needs strict conditions to become a spectral measure, and only a limited number of spectral measures have been discovered or constructed.  Therefore, a fundamental problem in this area is as follows: \textbf{Find or construct a spectral measure$!$}

It is known \cite{HLL1} that a spectral measure $\mu$ must be of pure type: $\mu$ is absolutely continuous or singular continuous with respect to the Lebesgue measure or counting measure supported on a finite set. In 1998, Jorgensen and Pedersen \cite{JP1} discovered the first example of spectral measure which is non-atomic and singular to the Lebesgue measure. They proved that the one fourth Cantor measure $\mu_{4}$, which satisfies
\begin{equation*}
	\mu_{4}(X)=\frac{1}{2}\mu_{4}(4X)+\frac{1}{2}\mu_{4}(4X-2)\;\;\text{for all Borel set}\; X\subset \mathbb{R}
\end{equation*}
and with the compact support $T=\{\sum_{i=1}^{\infty}4^{-i}d_{i}:d_{i}\in\{0,2\}\}$, is a spectral measure with a spectrum
\begin{equation*}
	\Lambda=\{\sum_{i=1}^{k}4^{i-1}d_{i}: d_{i}\in\{0,1\},k\in\mathbb{N}\}.
\end{equation*}
Since then, research on singular spectral measures has flourished, particularly within fractal theory, as evidenced by studies on various specific fractal spectral measures \cite{AFL1,CYZ,DH1,DHL1,LDL,C,DHL,FHW,HH1,TY1,Y1,LDZ,DL1,DHLY}. A key strategy to study the spectra theory of fractal measures such as self-affine measure and Moran measure, is by using infinite convolutions. 
Given a sequence $\{(R_k, D_k)\}_{k=1}^\infty$ where
\( \{R_{k}\}_{k=1}^{\infty} \subset M_{n}(\mathbb{Z}) \) and \( \{D_{k}\}_{k=1}^{\infty} \subset \mathbb{Z}^{n} \) is finite for all $k \in \mathbb{N}$. 
Then we write
\[
\mu_k = \delta_{R_1^{-1}D_1} \ast \delta_{R_1^{-1}R_2^{-1}D_2} \ast \cdots \ast \delta_{R_1^{-1}R_{2}^{-1}\cdots R_k^{-1}D_k}.
\]
If the sequence $\{\mu_k\}_{k=1}^\infty$ converges weakly to a Borel probability measure $\mu$, 
then we call $\mu$ the infinite convolution of $\{(R_k, D_k)\}_{k=1}^\infty$, denoted by
\begin{equation}\label{1.1}
\mu := \mu_{\{R_{k}\},\{D_{k}\}} = \delta_{R_{1}^{-1}D_{1}} * \delta_{R_{1}^{-1}R_{2}^{-1}D_{2}} *  \cdots \ast \delta_{R_1^{-1}R_2^{-1}\cdots R_k^{-1}D_k} \ast \cdots.\end{equation}
Here, \( \delta_{E} = \frac{1}{\#E} \sum_{a \in E} \delta_{a} \), where \( \delta_{a} \) is the Dirac probability measure with mass at the point \( a \). This measure is known as a Riesz product measure or Moran measure, and it is supported on the following discrete set:
\begin{equation*}
T(\{R_{k}\}, \{D_{k}\}) = \{ \sum_{k=1}^{\infty} (R_{k} R_{k-1} \cdots R_{1})^{-1} d : d \in D_{k} \} = \sum_{k=1}^{\infty} (R_{k} R_{k-1} \cdots R_{1})^{-1} D_{k}.
\end{equation*}
Hadamard triples are fundamental to the study of spectral infinite convolutions. Let $R\in M_{n}(\mathbb{Z})$  be an $n\times n$ expanding matrix, and let $ D,L \subset \mathbb{Z}^{n} $ be finite sets with $ \#D = \#L $. We say that the system \( (R, D, L) \) forms a Hadamard triple $($or that \( (R^{-1}D, L) \) is a compatible pair$)$ if
\begin{equation*}
	H=\frac{1}{\sqrt {\#D}}[e^{2\pi i  \langle R^{-1}d, l  \rangle}]_{d\in D, l\in L}
\end{equation*} is unitary, i.e., $HH^*=I$, where $H^{*}$ denotes the transposed conjugate of $H$. It is very convenient to construct an orthogonal family of exponential functions for \( L^{2}(\mu) \) using a Hadamard triple. However, the challenge lies in verifying that the constructed set \( \Lambda \) is indeed an orthonormal basis for \( L^{2}(\mu) \). Recent studies increasingly reveal a significant connection between Hadamard triples and spectral measures \cite{LW1, S, S1}.

This article primarily investigates the spectral properties of a class of Sierpinski-type Moran measures on \( \mathbb{R}^{n} \) and also presents a criterion for the spectral properties of a class of Moran measures on \(\mathbb{R}^{2} \). There are multiple motivations for this research. The standard Sierpinski self-affine measure is supported on the Sierpinski gasket \cite{W}, a celebrated fractal set that constitutes an important object of study in fractal analysis \cite{J}. Moreover, it is of considerable significance in the investigation of spectral measures. It is known that the zeros of the Fourier transform of a measure play an important role in orthogonal harmonic analysis, and Sierpinski-type measures represent a class of measures with discrete zeros of the Fourier transform.

In the following, we present some necessary concepts. For any $ R\in M_{n}(\mathbb{Z}) $, we define the norm as
$
 \| R\|^{'}=\sup_{x\neq0}\dfrac{\|Rx\|}{\| x\|} ,
$
where $ \|\cdot\| $ denotes the Euclidean norm.
For a finite set \( D \subset \mathbb{Z}^{n} \), define the mask polynomial of \( D \) by
\begin{equation*}
m_{D}(\xi) = \frac{1}{\#D} \sum_{d \in D} e^{2\pi i \langle \xi, d \rangle}, \quad \xi \in \mathbb{R}^{n}.
\end{equation*}
 Define $\mathcal{Z}(m_{D})=\{\xi\in \mathbb{R}^{n}:m_{D}(\xi)=0\}.$ Let \( \{R_k\}_{k=1}^{\infty} \) be a sequence of expanding matrices in \( M_n(\mathbb{Z}) \), and let \( \{D_k\}_{k=1}^{\infty} \) be a sequence of integer digit sets in \( \mathbb{Z}^n .\) Assuming
\begin{equation}\label{1.2}
\limsup_{k\rightarrow\infty} \| R_{k}^{-1}\|^{'}\le r<1\;\; \text{and}\;\;  \sup\{\|d\|:d\in D_{k},k\ge1\}<\infty,
\end{equation}
this guarantees the existence of the Moran measure \( \mu_{\{R_{k}\},\{D_{k}\}} \) defined by \eqref{1.1}. For \( k \ge 1 \), assume that \( \# D_k = m \) and the zero set \( \mathcal{Z}(m_{D_k}) \) can be decomposed into a finite union of disjoint sets, i.e.,
\begin{equation}\label{eq1.2}
	\mathcal{Z}(m_{D_k}) = \cup_{i=1}^{\phi(k)} \mathcal{Z}_{k,i}(m),
\end{equation}
where \( m \) is a prime and \( \mathcal{Z}_{k,i}(m) \) satisfies
\begin{equation}\label{eq1.5}
	\mathcal{Z} _{k,i}(m)=\cup_{j=1}^{m-1}(\frac{j}{m}\nu_{k,i}+\mathbb{Z}^{n})
\end{equation}
	for some $ \nu_{k,i}\in\{(l_{1},\cdots,l_{n})^{t}: l_{1},\cdots,l_{n}\in [1,m-1]\cap\mathbb{Z}\} $, where \( \phi(\cdot) \) is a mapping from \( \mathbb{Z}^+ \) to \( \mathbb{Z}^+ \).

The class of models that satisfies \eqref{eq1.2} represents a natural generalization of the Sierpinski-type family, which plays a significant role in fractal geometry and geometric measure theory \cite{F1}. Many researchers have studied the model in \eqref{eq1.2} and have derived a series of results, as discussed in \cite{CLZ,DHL1,DL, WLS1, Y1}. Among these, the self-similar measure \( \mu_{\rho,D_{1}} \) studied by Dai  \cite{DHL1}, where \( \rho \in \mathbb{R} \) and \( D_{1} = \{0, 1, \cdots, N-1\} \subset \mathbb{Z} \), as well as the self-affine measure \( \mu_{R,D_{2}} \) investigated by Deng et al.  \cite{DL}, where \( R = \text{diag}[\rho, \rho] \in M_{2}(\mathbb{R}) \) and \( D_{2} = \{(0,0)^{t}, (1,0)^{t}, (0,1)^{t}\} \).
By direct calculation, we obtain
\begin{equation*}
	\mathcal{Z}(m_{D_{1}})=\{\frac{j}{N}:j=1,2,\cdots,N-1\}+\mathbb{Z} \;\;\text{and} \;\; \mathcal{Z}(m_{D_{2}})=\dfrac{1}{3}\{ (1,2)^{t} , (2,1)^{t}\}+\mathbb{Z}^{2},
\end{equation*}
both of which satisfy condition \eqref{eq1.2}.
There are also additional digit sets that satisfy the model in \eqref{eq1.2}, such as $\{(0,0)^{t}, (1,0)^{t}, (0,-1)^{t}\}$, $\{(0,0)^{t}, (1,0)^{t}, (1,1)^{t}\}$  and $\{(0,0)^{t}, (2,3)^{t}, (3,5)^{t}\}$. Next, we will impose certain restrictions on the matrix \( R_k \). There exist $ \delta,\beta \in (0,\frac{1}{4}) $ and \( N \) such that for any \( p \ge 0 \) and \( k > N \),
\begin{equation}\label{a}
	R_{k+1}^{t}R_{k+2}^{t}\dotsi R_{k+p}^{t}\in \mathcal{A}_{\delta,\beta},
\end{equation}
where
\begin{equation*}
	\mathcal{A}_{\delta,\beta}:=\{A\in M_{n}(\mathbb{Z}): A^{-1}[-\frac{1}{2}-\delta,\frac{1}{2}+\delta]^{n}\cap (\cup_{k=1}^{\infty}\mathcal{Z}(m_{D_{k}}))_{\beta}=\emptyset\},
\end{equation*}
and  \( E_{\beta} = \{ x : \sup_{y \in E} \| x - y \| < \beta \} \) is the \( \beta \)-neighborhood of \( E \) under the Euclidean norm \( \| \cdot \| \). Using the notation and concepts defined above, we now present our main results.
\begin{thm}\label{thm1.4}
Let \( \mu_{\{R_k\}, \{D_k\}} \) be defined by \eqref{1.1} and \eqref{eq1.2}, and satisfy the conditions \eqref{1.2}, \eqref{a}. If there exists $i_{k}\in\{1,\cdots,\phi(k)\}$ such that $R_{k}^{t}\nu_{k,i_{k}} \in m\mathbb{Z}^{n}\;\text{for}\;k\ge 2. $ Then $ \mu_{\{R_{k}\},\{D_{k}\}} $ is a spectral measure.
\end{thm}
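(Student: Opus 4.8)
The plan is to construct an explicit spectrum for $\mu_{\{R_k\},\{D_k\}}$ by combining the Sierpinski-type structure of the zero sets with a compatibility/admissibility argument, and then to verify that the candidate spectrum is in fact complete using an equi-positivity (uniform lower bound) estimate of the kind that condition \eqref{a} is designed to supply. First I would set up the finite-level measures $\mu_{>k} = \delta_{R_{k+1}^{-1}D_{k+1}} * \delta_{R_{k+1}^{-1}R_{k+2}^{-1}D_{k+2}} * \cdots$ and record the factorization $\widehat{\mu}_{\{R_k\},\{D_k\}}(\xi) = \prod_{j=1}^{k} m_{D_j}\big((R_j\cdots R_1)^{-t}\xi\big)\cdot \widehat{\mu}_{>k}\big((R_k\cdots R_1)^{-t}\xi\big)$, which is the standard device for turning spectrality into a statement about the mask polynomials and their zeros. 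The hypothesis that $\tfrac1m \nu_{k,i_k}^t R_k \in \mathbb{Z}^n$ for $k\ge 2$ is exactly what makes the obvious Hadamard-triple-type construction go through: for each $k$ one picks the branch $\mathcal{Z}_{k,i_k}(m)$ of the zero set, and the condition guarantees that a suitable set $C_k \subset \mathbb{Z}^n$ (a complete set of coset representatives adapted to $\nu_{k,i_k}$, e.g. $\{0, \ell_k, 2\ell_k, \dots, (m-1)\ell_k\}$ for an appropriate integer vector $\ell_k$) has the property that $(R_k^{-t}C_k, D_k)$, or rather $\big(\tfrac1m\nu_{k,i_k} + \mathbb{Z}^n\big)$-translates, sits inside $\mathcal{Z}(m_{D_k})$; this is the integrality that lets the orthogonality propagate from level to level.

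The construction of the candidate spectrum then proceeds as in the Strichartz/Jorgensen–Pedersen paradigm: I would define
\begin{equation*}
\Lambda = \left\{ \sum_{k=1}^{K} R_1^t R_2^t \cdots R_k^t\, c_k : c_k \in C_k,\ K \in \mathbb{N} \right\}
\end{equation*}
(with the convention that the empty sum is $0$), where the $C_k$ are the coset-representative sets described above, chosen so that $C_k - C_k \setminus\{0\} \subset \mathcal{Z}(m_{D_k}^{(k)})$ where $m_{D_k}^{(k)}$ is the mask composed with $R_k^{-t}$; orthogonality of $E_\Lambda$ in $L^2(\mu)$ is then a direct consequence of the factorization of $\widehat\mu$ together with the fact that any two distinct elements of $\Lambda$ differ, after the appropriate rescaling, by an element landing in a zero of some $m_{D_j}$. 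For $k=1$ there is no integrality hypothesis, but since $D_1 \subset \mathbb{Z}^n$ and $R_1 \in M_n(\mathbb{Z})$ is expanding one can still choose $C_1 \subset \mathbb{Z}^n$ appropriately, or absorb the first factor harmlessly — this edge case I would handle separately and briefly.

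The main obstacle, and the heart of the proof, is \emph{completeness} of $E_\Lambda$: orthonormality is cheap, but showing $\overline{\mathrm{span}}\,E_\Lambda = L^2(\mu)$ requires ruling out the escape of mass. The standard criterion is that $Q_\Lambda(\xi) := \sum_{\lambda\in\Lambda}|\widehat\mu(\xi+\lambda)|^2 \equiv 1$, and by Jorgensen–Pedersen this reduces to showing $\liminf$ of the finite truncations $Q_{\Lambda_K}$ stays bounded below near $0$ — equivalently, an equi-positivity statement: there is $\varepsilon>0$ with $|\widehat{\mu}_{>k}(\xi)| \ge \varepsilon$ for all $\xi \in [-\tfrac12,\tfrac12]^n$ (or a slightly enlarged cube) and all large $k$. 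This is precisely where condition \eqref{a} enters: the assumption $R_{k+1}^t\cdots R_{k+p}^t \in \mathcal{A}_{\delta,\beta}$ keeps the rescaled arguments $(R_{k+1}^t\cdots R_{k+p}^t)^{-1}\xi$ uniformly away from the $\beta$-neighborhood of $\bigcup_j \mathcal{Z}(m_{D_j})$, so each mask factor $m_{D_{k+p}}$ evaluated there is bounded away from zero by a uniform constant $c = c(\delta,\beta) \in (0,1]$; taking the product over $p$ and using $\prod_p(1-(1-|m_{D_{k+p}}|)) $-type telescoping (or more carefully, a uniform estimate $|m_D(\xi)| \ge 1 - C\,\mathrm{dist}(\xi, \mathcal{Z}(m_D))^{-1}$... — rather, the right bound is that $-\log|m_{D}|$ is summable with a uniform tail) gives the desired $\varepsilon$. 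Assembling: equi-positivity $+$ the $Q_{\Lambda_K}$ machinery yields $Q_\Lambda \equiv 1$, hence $\Lambda$ is a spectrum and $\mu$ is spectral. I expect the technical crux to be making the tail estimate on $\prod_{p} |m_{D_{k+p}}|$ genuinely uniform in $k$ — this is where the boundedness of $\{\|d\|: d\in D_k\}$ from \eqref{1.2} and the contraction $\|R_k^{-1}\|' \le r < 1$ from \eqref{1.2} must be used in tandem with \eqref{a}, and it is likely that an auxiliary lemma from earlier in the paper (an equi-positivity or "uniform admissibility" lemma) is invoked here rather than reproved.
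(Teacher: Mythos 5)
Your outline follows the paper's strategy almost exactly: reduce to a normalized first level, use the integrality hypothesis $\frac1m\nu_{k,i_k}^{t}R_k\in\mathbb{Z}^n$ to build compatible pairs level by level, assemble the candidate spectrum $\Lambda=\bigcup_k\Lambda_k$ from coset representatives, and prove completeness via the Jorgensen--Pedersen criterion $Q_{\mu,\Lambda}\equiv 1$, with condition \eqref{a} supplying the equi-positivity bound $|\widehat{\mu}_{k>q}(\xi)|\ge\gamma_i\gamma^i$ on the enlarged cube (the paper's Lemma \ref{lem3.1}, proved exactly as you describe: a uniform per-factor lower bound off the $\beta$-neighborhood of the zero sets for the head, and a summable $-\log|m_{D}|$ estimate for the far tail using \eqref{1.2}). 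The completeness step in the paper is run as a contradiction on the increments $Q_{\mu,\Lambda_{n_{k+1}}}-Q_{\mu,\Lambda_{n_k}}$ rather than a $\liminf$ statement, but that is the same machinery.

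There is one genuine device you do not anticipate, and your construction would fail without it: the equi-positivity bound only holds for arguments inside $[-\frac12-\delta,\frac12+\delta]^n$, so one must guarantee that the rescaled partial spectra $(R_1^{t}\cdots R_k^{t})^{-1}\Lambda_k$ actually land in that cube. With your level-by-level choice of coset representatives, the accumulated contribution of the lower levels is only bounded by a quantity of order $\frac{r}{1-r}$, which need not be smaller than $\delta$. The paper resolves this by regrouping the construction into blocks of $K$ consecutive levels ($\widetilde{R}_k=R_{(k+1)K}\cdots R_{kK+1}$, with representatives $L_k$ reduced modulo $\widetilde{R}_k^{t}$ into $\widetilde{R}_k^{t}(-\frac12,\frac12]^n$), choosing $K$ large enough that the lower-level contribution is at most $\frac{c^2\sqrt{n}}{2}\frac{r^{K}}{1-r^{K}}\le\frac{\delta}{4}$ (Lemma \ref{lem3.2} and \eqref{3.7}). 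This blocking is the step that connects your two halves --- the Hadamard-triple construction and the equi-positivity estimate --- and without it the candidate spectrum cannot be fed into the lower bound. Everything else in your plan, including the handling of $k=1$ by a linear change of variables, matches the paper.
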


This theorem extends results from \cite{DHLY}, and we cite an example from \cite{LLZ} to illustrate that the condition \eqref{1.2} is essential. We also conjecture that this result is necessary; however, we have not yet found a suitable method to prove it. Surprisingly, we obtain the following result when \( R_k \) is a diagonal matrix.
\begin{thm}\label{thm1.2}
Let $ \mu_{\{R_{k}\},\{D_{k}\}} $ be  defined by \eqref{1.1} and \eqref{eq1.2}, satisfying condition \eqref{1.2}, where \( R_{k} = \text{diag} [p_{k,1}, \cdots, p_{k,n}] \) for \( k \ge 1 \).  If \( m > 2 \), then \( \mu_{\{R_{k}\}, \{D_{k}\}} \) is a spectral measure if and only if \( m \mid p_{k,i} \) for \( k \ge 2 \) and \( i = 1, 2, \cdots, n \).
\end{thm}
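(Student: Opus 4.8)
The plan is to prove the two implications separately, with the "if" direction following quickly from Theorem~\ref{thm1.4} and the "only if" direction being the substantive contribution. For the sufficiency, suppose $m\mid p_{k,i}$ for all $k\ge 2$ and $1\le i\le n$. Since $R_k=\operatorname{diag}[p_{k,1},\dots,p_{k,n}]$ and $\nu_{k,i_k}\in\{(l_1,\dots,l_n)^t:l_i\in[1,m-1]\cap\mathbb Z\}$, for any choice of $i_k$ the vector $\tfrac1m\nu_{k,i_k}^tR_k$ has $j$-th entry $\tfrac1m (\nu_{k,i_k})_j\, p_{k,j}\in\mathbb Z$ because $m\mid p_{k,j}$. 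Thus the hypothesis of Theorem~\ref{thm1.4} is satisfied (the other standing assumptions \eqref{1.2}, \eqref{eq1.2} are given, and \eqref{a} will follow since $R_k^t=R_k$ is diagonal with entries divisible by $m$, so $R_{k+1}^t\cdots R_{k+p}^t$ has all entries divisible by $m$ and pushes the $\delta$-box strictly inside a small box around the origin, which misses the $\beta$-neighborhood of the rational zero set for suitable $\delta,\beta$). Hence $\mu_{\{R_k\},\{D_k\}}$ is spectral.

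For the necessity, I would argue by contraposition: assume $m\nmid p_{k_0,i_0}$ for some $k_0\ge 2$ and some coordinate $i_0$, and show $L^2(\mu)$ cannot contain infinitely many orthogonal exponentials. The key tool is the structure of the Fourier transform: $\widehat\mu(\xi)=\prod_{k=1}^\infty m_{D_k}((R_kR_{k-1}\cdots R_1)^{-t}\xi)$, so any orthogonal set $\Lambda$ (a set with $0\in\Lambda$ and $\lambda-\lambda'\in\mathcal Z(\widehat\mu)$ for distinct $\lambda,\lambda'$) forces differences of $\Lambda$ into $\bigcup_k (R_1^t\cdots R_k^t)\,\mathcal Z(m_{D_k})$. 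Because $m$ is prime and each $\mathcal Z(m_{D_k})$ is the union \eqref{eq1.5} of the $\tfrac jm\nu_{k,i}+\mathbb Z^n$, a counting/pigeonhole argument modulo $m$ (working in $(\tfrac1m\mathbb Z/\mathbb Z)^n$ after clearing the matrices) shows that the relevant "level sets" at scale $k_0$ can host at most $m$ mutually orthogonal points unless the digit scaling by $R_{k_0}$ is compatible mod $m$; this compatibility is exactly $m\mid p_{k_0,i_0}$ in every coordinate. When $m\nmid p_{k_0,i_0}$, the coordinate $i_0$ "collapses" — the factor $m_{D_{k_0}}$ contributes at most a bounded (in fact, $<m$ usable) number of orthogonality relations there — and an inductive/telescoping estimate on the maximal cardinality of an orthogonal set, of the type used to prove Theorem~\ref{thm1.4}, yields a finite upper bound. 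Since a spectrum of the non-atomic measure $\mu$ must be infinite, this contradicts spectrality.

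The main obstacle is the necessity direction, specifically making rigorous the claim that failure of $m\mid p_{k_0,i_0}$ in a single coordinate $i_0$ caps the size of any orthogonal set. The delicate point is that $\mathcal Z(m_{D_{k_0}})$ is not a full sublattice of $\tfrac1m\mathbb Z^n$ but only the "skew lines" $\bigcup_j(\tfrac jm\nu_{k_0,i}+\mathbb Z^n)$; one must track how multiplication by the diagonal matrices $R_1^t,\dots,R_{k_0-1}^t$ (which may themselves have entries not divisible by $m$) transports these lines, and argue that along the $i_0$-axis the arithmetic progression generated mod $m$ by $p_{k_0,i_0}$ is the whole of $\mathbb Z/m\mathbb Z$, so that no new orthogonality at level $k_0$ can be "inherited" by later levels in a way that grows $\Lambda$ indefinitely. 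I expect this to require a careful lemma — perhaps already available in the body of the paper as part of the machinery behind Theorem~\ref{thm1.4} — on when a product $\prod_k m_{D_k}((R_1^t\cdots R_k^t)^{-1}\,\cdot\,)$ admits an infinite orthogonal family, reducing the problem to a purely number-theoretic statement about $p_{k,i}\bmod m$. Once that reduction is in place, the prime hypothesis $m>2$ makes the residue computation clean, and both directions close.
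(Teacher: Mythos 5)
Your sufficiency argument is fine and matches the paper: with $R_k$ diagonal and $m\mid p_{k,i}$, the vector $\tfrac1m\nu_{k,i_k}^tR_k$ is automatically integral and condition \eqref{a} can be checked directly, so Theorem~\ref{thm1.4} applies.

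The necessity direction, however, rests on a claim that is false, and this is a genuine gap. You propose to show by contraposition that if $m\nmid p_{k_0,i_0}$ for a single level $k_0$ and coordinate $i_0$, then $L^2(\mu)$ admits no infinite orthogonal family of exponentials. But the existence of an infinite orthogonal set is insensitive to what happens at any finite collection of levels: the paper's Lemma~\ref{lem4.1} shows that $\mu$ has an infinite orthogonal set as soon as there are \emph{infinitely many} indices $k$ with $m\mid p_{k,i}$ for all $i$ (one simply builds $\lambda_k=m^{-1}R_1\cdots R_{l_k}\nu_{l_k,1}^{(1)}$ using only those good levels and skips the bad ones). So, for instance, if every $R_k$ equals $\mathrm{diag}[m,\dots,m]$ except one $R_{k_0}$ with a single entry coprime to $m$, your contrapositive target fails: there is still an infinite orthogonal set, yet the theorem asserts the measure is not spectral. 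No cardinality bound on orthogonal sets can detect one bad level; the obstruction is to the \emph{completeness} of an orthogonal family, not to its size.

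The paper's route around this is structurally different and is the part your proposal is missing. Assuming $\mu$ is spectral with spectrum $\Lambda$, one first uses Lemma~\ref{lem4.1} to locate a level $k_0$ with $m\nmid p_{k_0,j_0}$ but $m\mid p_{k_0+1,j_0}$. One then takes a maximal orthogonal set $\Lambda'\subset\Lambda$ for the finite convolution $\mu_{k_0}$, partitions $\Lambda$ into the classes $\Lambda^{(i)}$ of \eqref{4.11}--\eqref{4.12}, and verifies the two coset conditions of the decomposition Lemma~\ref{lem4.3} (this is where $m\mid p_{k_0+1,j_0}$ is used, to separate $\mathcal Z(\hat\mu_{k_0})$ from $\mathcal Z(\hat\mu_{>k_0})$ in the $j_0$-th coordinate). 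Lemma~\ref{lem4.3} then forces the \emph{finite} measure $\mu_{k_0}$ to be spectral, and a pigeonhole count (Lemma~\ref{lem4.2}) shows that when $m\nmid p_{k_0,j_0}$ any orthogonal set for $\mu_{k_0}$ has at most $m^{k_0-2}(2m-1)<m^{k_0}=\dim L^2(\mu_{k_0})$ elements, a contradiction. Your proposal would need to be reorganized around this completeness/dimension-count mechanism; as written, the "cap the size of any orthogonal set" step cannot be repaired.
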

If \( R_k \) is not a diagonal matrix for \( k \geq 1 \), and we restrict \( \phi(k) = 1 \) to be a constant function, we obtain the following necessary and sufficient condition for the measure \( \mu_{\{R_k\}, \{D_k\}} \) to be a spectral measure.
\begin{thm}\label{thm1.3}
Let \( \mu_{\{R_k\}, \{D_k\}} \) be defined by \eqref{1.1} and \eqref{eq1.2}, and satisfy the conditions \eqref{1.2}, \eqref{a}. If \( m > 2 \) and \( \phi(k) = 1 \) for \( k \geq 1 \), then \( \mu_{\{R_{k}\}, \{D_{k}\}} \) is a spectral measure if and only if \(  R_{k}^{t}\nu_{k}  \in m\mathbb{Z}^{n} \) for \( k \geq 2 \).
\end{thm}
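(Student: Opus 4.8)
The sufficiency is immediate: when $\phi(k)=1$ the only admissible index in Theorem~\ref{thm1.4} is $i_k=1$, so $\nu_{k,i_k}=\nu_k$ and the condition ``$\frac1m\nu_k^tR_k\in\mathbb{Z}^n$ for $k\ge2$'' is literally the hypothesis of Theorem~\ref{thm1.4}; hence $\mu:=\mu_{\{R_k\},\{D_k\}}$ is spectral (note $m>2$ is not even needed for this direction). All the content is in the necessity, which I would prove by contradiction. Suppose $\mu$ is spectral with a spectrum $\Lambda\ni0$, but that $\frac1m\nu_k^tR_k\notin\mathbb{Z}^n$ for some $k\ge2$, and let $k_0$ be the smallest such index. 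Since $E_\Lambda$ is then an orthonormal basis of $L^2(\mu)$, Parseval forces the Jorgensen--Pedersen mass function $Q_\Lambda(\xi):=\sum_{\lambda\in\Lambda}|\widehat\mu(\xi+\lambda)|^2$ to equal $1$ at \emph{every} point $\xi$; the plan is to exhibit a single point $\xi_0$ with $Q_\Lambda(\xi_0)<1$.

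First I would pin down what orthogonality forces. Writing $P_k:=(R_k\cdots R_1)^t\in M_n(\mathbb{Z})$, we have $\widehat\mu(\xi)=\prod_{k\ge1}m_{D_k}(P_k^{-1}\xi)$, and since $\phi(k)=1$ in \eqref{eq1.2} and \eqref{eq1.5},
\[
\mathcal{Z}(\widehat\mu)=\bigcup_{k\ge1}P_k\,\mathcal{Z}(m_{D_k})=\bigcup_{k\ge1}\bigcup_{j=1}^{m-1}\Bigl(\tfrac{j}{m}P_k\nu_k+P_k\mathbb{Z}^n\Bigr),
\]
so orthonormality gives $\Lambda-\Lambda\subseteq\{0\}\cup\mathcal{Z}(\widehat\mu)$, hence (as $0\in\Lambda$) $\Lambda\subseteq\{0\}\cup\mathcal{Z}(\widehat\mu)$. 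By minimality of $k_0$ one has $\tfrac1m R_j^t\nu_j\in\mathbb{Z}^n$, hence $\tfrac1m P_j\nu_j=P_{j-1}\bigl(\tfrac1m R_j^t\nu_j\bigr)\in\mathbb{Z}^n$, for $2\le j\le k_0-1$; thus the only non-integral contributions to $\Lambda-\Lambda$ from levels below $k_0$ live in $R_1^t\mathcal{Z}(m_{D_1})$. The core of the argument is then to show that the defect $R_{k_0}^t\nu_{k_0}\notin m\mathbb{Z}^n$ prevents $\Lambda$ from being ``evenly distributed'' modulo the lattice $P_{k_0-1}\mathbb{Z}^n$. Concretely, the partial product $\widehat{\mu^{(k_0-1)}}:=\prod_{j=1}^{k_0-1}m_{D_j}(P_j^{-1}\cdot)$ is $P_{k_0-1}\mathbb{Z}^n$-periodic (because $P_{k_0-1}\mathbb{Z}^n\subseteq P_j\mathbb{Z}^n$ for $j\le k_0-1$), and I would produce a residue class $\xi_0+P_{k_0-1}\mathbb{Z}^n$ such that every $\lambda\in\Lambda$ with $\xi_0+\lambda$ in that class already forces $\widehat{\mu^{(k_0-1)}}(\xi_0+\lambda)=0$, while the orthogonality constraints leave too little room for $\Lambda$ in the remaining classes to compensate.

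Next I would feed this into the mass formula through the factorization $\mu=\mu^{(k_0-1)}*\bigl((R_{k_0-1}\cdots R_1)^{-1}\bigr)_*\mu^{[k_0]}$, where $\mu^{(k_0-1)}$ is the convolution of the first $k_0-1$ Dirac factors of \eqref{1.1} (so $\widehat{\mu^{(k_0-1)}}$ is as above) and $\mu^{[k_0]}=\delta_{R_{k_0}^{-1}D_{k_0}}*\delta_{(R_{k_0+1}R_{k_0})^{-1}D_{k_0+1}}*\cdots$, giving $\widehat\mu(\xi)=\widehat{\mu^{(k_0-1)}}(\xi)\,\widehat{\mu^{[k_0]}}(P_{k_0-1}^{-1}\xi)$. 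Condition \eqref{a} together with \eqref{1.2} (and, in the regime $k_0\le N$, the continuity at the origin of the finitely many leading factors) yields the standard uniform lower bound $|\widehat{\mu^{[k_0]}}(\eta)|\ge c>0$ for $\eta\in[-\tfrac12-\delta,\tfrac12+\delta]^n$. Evaluating $Q_\Lambda$ at the $\xi_0$ built above and using $|\widehat\mu(\xi_0+\lambda)|^2=|\widehat{\mu^{(k_0-1)}}(\xi_0+\lambda)|^2\,|\widehat{\mu^{[k_0]}}(P_{k_0-1}^{-1}(\xi_0+\lambda))|^2$, one splits the $\Lambda$-sum by residue class mod $P_{k_0-1}\mathbb{Z}^n$: on the ``bad'' class all terms vanish, and on the others orthogonality bounds the total contribution strictly below $1$, the lower bound on $|\widehat{\mu^{[k_0]}}|$ being precisely what prevents the deficiency from being hidden by the tail. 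Summing gives $Q_\Lambda(\xi_0)<1$, the desired contradiction; since $k_0$ was the smallest failure index, this proves $\frac1m\nu_k^tR_k\in\mathbb{Z}^n$ for all $k\ge2$.

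The step I expect to be the real obstacle is the middle one: upgrading the \emph{algebraic} defect $R_{k_0}^t\nu_{k_0}\notin m\mathbb{Z}^n$ to a genuine structural obstruction for $\Lambda$. Orthogonality alone gives only $\Lambda-\Lambda\subseteq\bigcup_{k,j}\tfrac{j}{m}P_k\nu_k+P_k\mathbb{Z}^n$ and does not by itself forbid the \emph{deeper} levels $k\ge k_0$ from repopulating the offending residue class; excluding this is where the primality of $m$ (so that $\gcd(j,m)=1$ for $1\le j\le m-1$, whence the cosets $\tfrac jm\nu_k+\mathbb{Z}^n$ together with $\mathbb{Z}^n$ form a cyclic group of order $m$), the hypothesis $m>2$, and the quarantine condition \eqref{a} (which confines the deep-level zeros to a shrinking neighbourhood that cannot reach $\xi_0$) all enter. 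I would carry this out by a careful residue count of $\Lambda$ modulo $P_{k_0}\mathbb{Z}^n$: spectrality forces the number of $\lambda\in\Lambda$ in each class to match the multiplicities dictated by $|\det(R_1\cdots R_{k_0})|$ and by the zero sets $\mathcal{Z}(m_{D_j})$ for $j\le k_0$, and the defect at level $k_0$ makes these constraints mutually inconsistent.
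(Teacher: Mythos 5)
The sufficiency direction is fine and matches the paper (it is indeed an immediate application of Theorem~\ref{thm1.4}). For the necessity, however, your outline has a genuine gap precisely at the step you yourself flag as ``the real obstacle,'' and the way you propose to close it would not work as stated. Your plan is to locate a point $\xi_0$ with $Q_{\mu,\Lambda}(\xi_0)<1$ by a residue count of $\Lambda$ modulo $P_{k_0}\mathbb{Z}^n$, arguing that ``spectrality forces the number of $\lambda\in\Lambda$ in each class to match the multiplicities dictated by $|\det(R_1\cdots R_{k_0})|$.'' But $\Lambda$ is infinite while there are only $|\det P_{k_0}|$ residue classes, so spectrality imposes no per-class cardinality constraint at all; a counting argument of this kind only makes sense for the finite convolution $\mu_{k_0}$ (where $\dim L^2(\mu_{k_0})=m^{k_0}$), and that is exactly the route of Section~4 (Lemmas~\ref{lem4.3} and \ref{lem4.2}), whose componentwise analysis is tied to the diagonal case of Theorem~\ref{thm1.2} and does not transfer to general $R_k$ satisfying \eqref{a}. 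Moreover, even granting a ``bad'' residue class on which $\widehat{\mu^{(k_0-1)}}$ vanishes, you give no mechanism preventing the remaining classes from summing to $1$; the uniform lower bound on the tail transform helps only after one already knows a structural deficiency of $\Lambda$, and it is also not automatic in the range $k_0\le N$ where \eqref{a} gives no control of the factors $m_{D_j}$, $k_0\le j\le N$, away from the origin.

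The paper closes this gap by an entirely different mechanism, which is worth noting because it is where the hypotheses $m>2$ and ``$m$ prime'' actually enter. After normalizing $R_1=\diag[m,\dots,m]$ via Lemma~\ref{lem2.6}, one decomposes $\Lambda$ by residues modulo $m$, groups the residues into the cosets $\Pi_{1,i}$ of the cyclic zero structure, and uses the equal-weight rigidity of Lemma~\ref{lem2.1} applied to $Q_{\mu,\Lambda}\equiv1$ to prove (Lemma~\ref{lem5.2}) that \emph{any} selection of one residue per coset yields a spectrum $\Lambda_{\pi_0,\dots,\pi_{m^{n-1}-1}}$ of the tail measure $\mu_{k>1}$. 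Lemma~\ref{lem5.3} then shows such a tail spectrum must contain points congruent to both $+\tfrac1m\nu_2$ and $-\tfrac1m\nu_2$ (distinct classes because $m>2$), and a short case analysis on which pieces $\tfrac1m\pi_i+\Lambda_{\pi_i}$ can contain them forces $\tfrac1m R_2^t\nu_2\in\mathbb{Z}^n$; iterating on the tails gives all $k\ge2$. Your proposal contains neither the passage of the spectrum to the tail measure nor the $\pm\nu$ rigidity, so as written it is an unproved strategy rather than a proof; to complete it you would essentially have to reconstruct the function-level mass-redistribution argument (Lemmas~\ref{lem2.1}, \ref{lem5.1}--\ref{lem5.3}) rather than count residues.
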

It should be noted that \( \nu_{k} \) is also given by \eqref{eq1.5}. However, the assumption  \( \phi(k) = 1 \) implies that the second index \( i \) is unnecessary. In fact, the sufficiency of  Theorems \ref{thm1.2} and  \ref{thm1.3} can be derived from Theorem \ref{thm1.4}; therefore, our main focus in the subsequent proofs will be on their necessity. This result complements the findings presented in \cite{CCWW}.

Furthermore, we present the following corollaries, which are important results of the paper.
\begin{cor}\label{cor1.5}
Let $ \mu_{\{R_{k}\},\{D_{k}\}} $ be  defined by \eqref{1.1} and \eqref{eq1.2}, satisfying condition \eqref{1.2}, where
\begin{equation}\label{1.7}
R_{k} := \begin{bmatrix}
	a_{1}^{(k)} & a_{1}^{(k)} & \cdots & a_{1}^{(k)} \\
	0 & a_{2}^{(k)} & \cdots & a_{2}^{(k)} \\
	\vdots & \vdots & \ddots & \vdots \\
	0 & 0 & \cdots & a_{n}^{(k)}
\end{bmatrix} \in M_{n}(\mathbb{Z}) \quad \text{for } k \geq 1.
\end{equation}
Suppose that \( m > 2 \) is a prime and \( \phi(k) = 1 \)  for \( k \geq 1 \). Then \( \mu_{\{R_{k}\},\{D_{k}\}} \) is a spectral measure if and only if \( m \mid a_{i}^{(k)} \) for \( k \geq 2 \) and \( i \in \{1, 2, \cdots, n\} \).
\end{cor}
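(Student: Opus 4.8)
The plan is to obtain Corollary~\ref{cor1.5} as a specialization of Theorem~\ref{thm1.3}. Since $\phi(k)=1$, I would suppress the second index and write $\nu_k=(l_1^{(k)},\dots,l_n^{(k)})^{t}$ with each $l_i^{(k)}\in[1,m-1]\cap\mathbb{Z}$. Granting that hypothesis~\eqref{a} is available in the present setting (see the last paragraph), Theorem~\ref{thm1.3} reduces the whole corollary to the elementary equivalence
\[
\tfrac1m\,\nu_k^{t}R_k\in\mathbb{Z}^n\ \Longleftrightarrow\ m\mid a_i^{(k)}\ \text{ for every }i\in\{1,\dots,n\},\qquad (k\ge 2).
\]
Everything then rests on this translation plus the verification of \eqref{a}.

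For the equivalence I would use the explicit shape \eqref{1.7}: the $i$-th row of $R_k$ carries the value $a_i^{(k)}$ in the columns $i,i+1,\dots,n$ and $0$ in the columns $1,\dots,i-1$, so the $j$-th coordinate of the row vector $\nu_k^{t}R_k$ is $\sum_{i=1}^{j}l_i^{(k)}a_i^{(k)}$. Thus $\tfrac1m\nu_k^{t}R_k\in\mathbb{Z}^n$ is equivalent to $m\bigm|\sum_{i=1}^{j}l_i^{(k)}a_i^{(k)}$ for all $j=1,\dots,n$. The direction ``$\Leftarrow$'' is immediate. For ``$\Rightarrow$'' I would induct on $j$: for $j=1$ we get $m\mid l_1^{(k)}a_1^{(k)}$, and since $m$ is prime with $1\le l_1^{(k)}\le m-1$ this forces $m\mid a_1^{(k)}$; assuming $m\mid a_i^{(k)}$ for all $i\le j$, subtracting $\sum_{i=1}^{j}l_i^{(k)}a_i^{(k)}$ from $\sum_{i=1}^{j+1}l_i^{(k)}a_i^{(k)}$ gives $m\mid l_{j+1}^{(k)}a_{j+1}^{(k)}$, hence $m\mid a_{j+1}^{(k)}$. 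Coupled with the necessity half of Theorem~\ref{thm1.3} this yields the necessity half of the corollary; coupled with Theorem~\ref{thm1.4} (equivalently, the sufficiency half of Theorem~\ref{thm1.3}) it yields the sufficiency half, provided \eqref{a} holds.

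The point needing care — and the main obstacle — is \eqref{a}. For the sufficiency direction we may assume $m\mid a_i^{(k)}$ for all $i$ and all $k\ge2$, so every entry of $R_k^{t}$ ($k\ge 2$) is a multiple of $m$. Hence every product $A=R_{k+1}^{t}R_{k+2}^{t}\cdots R_{k+p}^{t}$ with $p\ge 1$ appearing in \eqref{a} is lower triangular with all entries divisible by $m^{p}$ and with $(1,1)$-entry of modulus $|a_1^{(k+1)}\cdots a_1^{(k+p)}|\ge m^{p}$; consequently the first row of $A^{-1}$ is $\bigl((a_1^{(k+1)}\cdots a_1^{(k+p)})^{-1},0,\dots,0\bigr)$, so every $x\in A^{-1}[-\tfrac12-\delta,\tfrac12+\delta]^n$ satisfies $|x_1|\le\frac{1/2+\delta}{m^{p}}\le\frac{1/2+\delta}{m}<\frac{3}{4m}$. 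On the other hand, for each $k$ every point of $\mathcal{Z}(m_{D_k})$ has first coordinate at distance $\ge\frac1m$ from $\mathbb{Z}$, since $j\,l_1^{(k)}\not\equiv0\pmod m$ whenever $1\le j\le m-1$, $1\le l_1^{(k)}\le m-1$ and $m$ is prime. Choosing any $\delta\in(0,\tfrac14)$, any $\beta\in(0,\tfrac1{4m})\subset(0,\tfrac14)$, and $N$ from \eqref{1.2}, the $\beta$-neighbourhood of $\bigcup_{k}\mathcal{Z}(m_{D_k})$ is disjoint from the slab $\{\,|x_1|<\tfrac{3}{4m}\,\}$, hence from $A^{-1}[-\tfrac12-\delta,\tfrac12+\delta]^n$; thus $A\in\mathcal{A}_{\delta,\beta}$ and \eqref{a} holds. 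The delicate part is exactly this: one must ensure the thin parallelepiped $A^{-1}[-\tfrac12-\delta,\tfrac12+\delta]^n$ stays clear of the thickened zero sets for every admissible product $A$, and it is the upper-triangular form \eqref{1.7} together with the divisibility $m\mid a_i^{(k)}$ that confines this parallelepiped to a thin slab about the first coordinate axis and makes the argument go through.
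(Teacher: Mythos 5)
Your proposal is correct and follows essentially the same route as the paper: reduce to Theorem \ref{thm1.3}, translate $\frac{1}{m}\nu_k^{t}R_k\in\mathbb{Z}^n$ into $m\mid a_i^{(k)}$ via the triangular structure and the primality of $m$, and verify \eqref{a} for the sufficiency direction by confining $A^{-1}\left[-\tfrac12-\delta,\tfrac12+\delta\right]^n$ to a thin slab in the first coordinate (the paper does this with the explicit formula for $R_k^{t^{-1}}$ and the choice $\delta=\tfrac18$, $\beta=\tfrac1{8m}$, which is the same estimate). The only caveat, shared with the paper's own proof, is that for the necessity direction one must use that the necessity half of Theorem \ref{thm1.3} does not actually rely on hypothesis \eqref{a}, since \eqref{a} cannot be verified before the divisibility is known.
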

The following corollary examines the Sierpinski-type measure on \( \mathbb{R}^2 \). Write
\begin{equation*}
 \Gamma_{1}:=\{\{(0,0)^{t}, (a,b)^{t}, (c,d)^{t}\}:(-d-c)=(a+b)\pmod3\}
\end{equation*}
and
\begin{equation*}
	\Gamma_{2}:=\{\{(0,0)^{t}, (a,b)^{t}, (c,d)^{t}\}:(d-c)=(a-b) \pmod3\}.
\end{equation*}
\begin{cor}\label{cor1.6}
Let $ \mu_{\{R_{k}\},\{D_{k}\}} $ be  defined by \eqref{1.1}, and satisfy the conditions \eqref{1.2}, \eqref{a}, where
\begin{equation*}
		 D_{k}=\{(0,0)^{t}, (a_{k},b_{k})^{t}, (c_{k},d_{k})^{t}\}\in \Gamma_{1}\cup\Gamma_{2} \;\;\text{and}\;\; |a_{k}d_{k}-b_{k}c_{k}|=1\;\;\text{for}\;\;k\ge1.
\end{equation*}
Then \( \mu_{\{R_{k}\},\{D_{k}\}} \) is a spectral measure if and only if $ R_{k}^{t}(1,i)^{t}\in 3\mathbb{Z}^{2} $ for $D_{k}\in\Gamma_{i}$, $i\in\{1,2\}$ and $ k \ge 2 $.
\end{cor}

The organization of the paper is as follows. In Section 2, we introduce some notation and basic lemmas related to spectral measures that will be used throughout the paper. In Section 3, we prove Theorem \ref{thm1.4}. In Section 4, we primarily establish the necessity of Theorem \ref{thm1.2}, while its sufficiency can be derived from Theorem \ref{thm1.4}.  In Section 5, we first prove Theorem \ref{thm1.3}, Corollary \ref{cor1.5} and Corollary \ref{cor1.6}, and then provide some examples to illustrate our results.

\section{\bf Preliminaries}
%In this section, we introduce some preliminary definitions and basic results which are used in our proof. 
Let $\mu$ be a probability measure with compact support in $\mathbb{R}^{n}$, its Fourier transform is defined as usual,
   $$
   \widehat{\mu}(\xi)=\int_{\mathbb{R}^{n}}e^{-2\pi i  \langle\xi, x\rangle } d\mu(x),\quad \xi\in\mathbb{R}^{n} .
   $$
It is easy to show that $\Lambda$ is an orthogonal set of $\mu$ if and only if $\widehat{\mu}(\lambda-\lambda ')=0$ for any $\lambda\neq \lambda '\in \Lambda$. In other word, $E_{\Lambda}=\{e^{-2\pi i \langle\lambda, x\rangle}:\lambda\in \Lambda\}$ is an orthogonal family of $ L^2(\mu) $ if and only if
 $ (\Lambda-\Lambda)\backslash\{0\}\subset\mathcal{Z}( \widehat{\mu})  $. Define the function
   $$
   Q_{\mu,\Lambda}(\xi)=\displaystyle{\sum_{\lambda\in \Lambda}}|\widehat{\mu}(\xi+\lambda)|^2.
   $$
 The following theorem is a basic criterion for the spectrality of $\mu$ \rm{\cite{JP1}}.
    \medskip
 \begin{lemma}[\cite{JP1}]\label{lem2.2}
    Let $\mu$ be a Borel probability measure with compact support in $\mathbb{R}^{n}$, and let $ \Lambda \subset \mathbb{R}^{n} $ be a countable subset. Then

    $\rm (i)$ $E_{\Lambda}$ is an orthogonal family of $L^2(\mu)$ if and only if $Q_{\mu,\Lambda}(\xi)\leq1$ for $\xi\in \mathbb{R}^{n}$;

    $\rm (ii)$ $E_{\Lambda}$ is an orthogonal basis for $L^2(\mu)$ if and only if $Q_{\mu,\Lambda}(\xi)=1$ for $\xi\in \mathbb{R}^{n}$;

   $\rm (iii)$ $Q_{\mu,\Lambda}(\xi)$ has an entire analytic extension to $\mathbb{C}^{n}$ if $\Lambda$ is an orthogonal set of $\mu$.
    \end{lemma}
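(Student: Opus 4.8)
The plan is to realize $Q_{\mu,\Lambda}$ as a Bessel/Parseval sum in the Hilbert space $L^{2}(\mu)$ and to read off (i)--(iii) from the elementary theory of orthonormal systems together with one analytic-continuation argument. Write $e_{\gamma}(x)=e^{-2\pi i\langle\gamma,x\rangle}$, viewed as a vector in $L^{2}(\mu)$; it is a unit vector because $\|e_{\gamma}\|_{\mu}^{2}=\widehat{\mu}(0)=\mu(\mathbb{R}^{n})=1$. A direct computation gives $\langle e_{\gamma},e_{\gamma'}\rangle_{\mu}=\widehat{\mu}(\gamma-\gamma')$, so $E_{\Lambda}$ is orthonormal if and only if $\widehat{\mu}(\lambda-\lambda')=0$ for all distinct $\lambda,\lambda'\in\Lambda$. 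The identity driving everything is that, for fixed $\xi$, the numbers $\{|\widehat{\mu}(\xi+\lambda)|^{2}\}_{\lambda}$ are precisely the squared Fourier coefficients of $e_{-\xi}$ against $E_{\Lambda}$: since $|\widehat{\mu}(-w)|=|\widehat{\mu}(w)|$ (as $\mu$ is real), one has $|\widehat{\mu}(\xi+\lambda)|^{2}=|\langle e_{-\xi},e_{\lambda}\rangle_{\mu}|^{2}$, whence $Q_{\mu,\Lambda}(\xi)=\sum_{\lambda\in\Lambda}|\langle e_{-\xi},e_{\lambda}\rangle_{\mu}|^{2}$.

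For (i), if $E_{\Lambda}$ is orthonormal then Bessel's inequality applied to the unit vector $e_{-\xi}$ immediately gives $Q_{\mu,\Lambda}(\xi)\le\|e_{-\xi}\|_{\mu}^{2}=1$ for every $\xi$. For the converse I would assume $Q_{\mu,\Lambda}\le 1$ everywhere, fix $\lambda_{0}\in\Lambda$, and evaluate at $\xi=-\lambda_{0}$, isolating the $\lambda=\lambda_{0}$ term: $Q_{\mu,\Lambda}(-\lambda_{0})=|\widehat{\mu}(0)|^{2}+\sum_{\lambda\ne\lambda_{0}}|\widehat{\mu}(\lambda-\lambda_{0})|^{2}=1+\sum_{\lambda\ne\lambda_{0}}|\widehat{\mu}(\lambda-\lambda_{0})|^{2}$. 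The bound $\le 1$ forces every summand with $\lambda\ne\lambda_{0}$ to vanish, so $\widehat{\mu}(\lambda-\lambda_{0})=0$; as $\lambda_{0}$ was arbitrary, $E_{\Lambda}$ is orthogonal.

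For (ii), once orthonormality is available (it holds whenever $Q_{\mu,\Lambda}\equiv 1$, by (i)), I would invoke the standard criterion that an orthonormal system is complete if and only if Parseval's identity holds on a \emph{total} subset of the Hilbert space. Since $\mu$ has compact support, the Stone--Weierstrass theorem shows the linear span of $\{e_{-\xi}:\xi\in\mathbb{R}^{n}\}$ is dense in $L^{2}(\mu)$, so these exponentials form such a total subset; and Parseval for $e_{-\xi}$ reads exactly $Q_{\mu,\Lambda}(\xi)=\|e_{-\xi}\|_{\mu}^{2}=1$. Hence $E_{\Lambda}$ is an orthonormal basis if and only if $Q_{\mu,\Lambda}\equiv 1$.

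The main obstacle is (iii), because $Q_{\mu,\Lambda}$ is assembled from moduli $|\widehat{\mu}(\xi+\lambda)|^{2}$, which are not holomorphic, so the naive termwise continuation fails. The resolution exploits the positivity of $\mu$: on $\mathbb{R}^{n}$ we have $\overline{\widehat{\mu}(\xi)}=\widehat{\mu}(-\xi)$, so each term extends to the entire function $g_{\lambda}(z):=\widehat{\mu}(z+\lambda)\,\widehat{\mu}(-z-\lambda)$ (a product of entire functions, using that $\widehat{\mu}$ is entire on $\mathbb{C}^{n}$ by Paley--Wiener for the compactly supported $\mu$), and $g_{\lambda}$ agrees with $|\widehat{\mu}(\xi+\lambda)|^{2}$ on $\mathbb{R}^{n}$. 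It then remains to show $\sum_{\lambda}g_{\lambda}$ converges locally uniformly on $\mathbb{C}^{n}$. Writing $z=\xi+i\eta$ and $h_{\eta}(x)=e^{2\pi\langle\eta,x\rangle}\in L^{2}(\mu)$, I would note $\widehat{\mu}(\xi+\lambda+i\eta)=\langle h_{\eta},e_{-\xi-\lambda}\rangle_{\mu}$; since the translate $\{e_{-\xi-\lambda}\}_{\lambda}$ is again orthonormal (it uses only $\widehat{\mu}(\lambda-\lambda')=0$), Bessel gives $\sum_{\lambda}|\widehat{\mu}(\xi+\lambda+i\eta)|^{2}\le\|h_{\eta}\|_{\mu}^{2}$, and symmetrically $\sum_{\lambda}|\widehat{\mu}(-\xi-\lambda-i\eta)|^{2}\le\|h_{-\eta}\|_{\mu}^{2}$. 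A Cauchy--Schwarz step over $\lambda$ then yields $\sum_{\lambda}|g_{\lambda}(\xi+i\eta)|\le\|h_{\eta}\|_{\mu}\,\|h_{-\eta}\|_{\mu}$, a bound uniform in $\xi$ and locally bounded in $\eta$ (hence on compact subsets of $\mathbb{C}^{n}$). By the Weierstrass convergence theorem for holomorphic functions, the sum defines an entire function on $\mathbb{C}^{n}$ that restricts to $Q_{\mu,\Lambda}$ on $\mathbb{R}^{n}$, which is (iii).
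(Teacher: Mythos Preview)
Your proof is correct in all three parts and follows the standard argument that goes back to Jorgensen--Pedersen. However, there is nothing to compare against here: the paper does not supply its own proof of this lemma. It is stated with the citation \cite{JP1} and used as a black box throughout (for instance in the proofs of Lemma~\ref{lem2.6}, Theorem~\ref{thm3.4}, and Lemma~\ref{lem5.2}), so the ``paper's own proof'' is simply the reference to the original source.
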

In the following, we introduce a simple yet very useful conclusion regarding weighted sums.
 \begin{lemma}[\cite{DL1}]\label{lem2.1}
 Let $ p_{i,j} $ be positive numbers such that $ \sum_{j=1}^{k}p_{i,j}=1 $ $( i=1,\;2,\dotsi,\;m) $ and $ x_{i,j} $ be non-negative numbers with $ \sum_{i=1}^{m}\max_{1\le j\le k}x_{i,j}\le1 $. Then $ \sum_{i=1}^{m}\sum_{j=1}^{k}p_{i,j}x_{i,j}=1 $ if and only if $ x_{i,1}=\dotsi=x_{i,k} $ for $ 1\le i\le m $ and $ \sum_{i=1}^{m}x_{i,1}=1 $.
 \end{lemma}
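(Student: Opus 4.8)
The plan is to reduce the whole statement to a single chain of inequalities and then track exactly which of them must be saturated. First I would introduce the abbreviation $M_i := \max_{1\le j\le k} x_{i,j}$ for each $i$, so that the standing hypothesis reads $\sum_{i=1}^m M_i \le 1$. The one elementary observation that drives everything is the per-row bound: since the weights satisfy $p_{i,j}>0$ and $\sum_{j=1}^k p_{i,j}=1$, replacing each $x_{i,j}$ by its maximum $M_i$ in a convex combination gives
\begin{equation*}
\sum_{j=1}^k p_{i,j} x_{i,j} \le \sum_{j=1}^k p_{i,j} M_i = M_i
\end{equation*}
for every $i$. Summing over $i$ and invoking the hypothesis yields the master inequality $\sum_{i=1}^m\sum_{j=1}^k p_{i,j}x_{i,j} \le \sum_{i=1}^m M_i \le 1$, which already bounds the quantity of interest by $1$.

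The sufficiency direction is then immediate. If $x_{i,1}=\dots=x_{i,k}$ for each $i$, writing $c_i$ for this common value, the inner sum collapses to $c_i\sum_{j=1}^k p_{i,j}=c_i=x_{i,1}$, so the double sum equals $\sum_{i=1}^m x_{i,1}$, which is assumed to equal $1$.

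For the necessity direction I would run the master inequality backwards. Assuming the double sum equals $1$, the chain $1=\sum_{i=1}^m\sum_{j=1}^k p_{i,j}x_{i,j}\le \sum_{i=1}^m M_i\le 1$ forces every inequality in it to be an equality. In particular $\sum_{i=1}^m M_i=1$, and since the total of the nonnegative quantities $M_i-\sum_{j=1}^k p_{i,j}x_{i,j}$ vanishes, each per-row bound is individually saturated: $\sum_{j=1}^k p_{i,j}x_{i,j}=M_i$ for every $i$. Rewriting this as $\sum_{j=1}^k p_{i,j}(M_i-x_{i,j})=0$, a sum of nonnegative summands (here I use $p_{i,j}>0$ together with $x_{i,j}\le M_i$), I conclude $M_i-x_{i,j}=0$ for every $j$, i.e. $x_{i,1}=\dots=x_{i,k}=M_i$. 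Finally $\sum_{i=1}^m x_{i,1}=\sum_{i=1}^m M_i=1$, which completes the proof.

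The argument has no substantial obstacle; the only point requiring genuine care is the bookkeeping of equality cases. The essential use of the hypothesis $p_{i,j}>0$ occurs precisely at the last step, where strict positivity of the weights is what upgrades the scalar identity $\sum_{j=1}^k p_{i,j}(M_i-x_{i,j})=0$ to the pointwise conclusion $x_{i,j}=M_i$ for all $j$. Were any weight permitted to vanish, the corresponding $x_{i,j}$ could differ from $M_i$ without affecting either sum, and the lemma would fail, so I would emphasize this as the hypothesis that the proof genuinely exploits.
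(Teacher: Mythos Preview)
Your proof is correct. The paper does not give its own argument for this lemma; it is quoted from \cite{DL1} without proof, so there is nothing to compare against beyond noting that your chain-of-inequalities approach---bounding each row by $M_i$, summing, and then tracking which inequalities must saturate when the total equals $1$---is the standard one and is exactly what the cited reference does.
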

The following is a standard result for compatible pairs \cite{LW1, S, S1}.

 \begin{pro}\label{lem2.4}
 Let $ S,D\subset \mathbb{Z}^{n} $ and let $ R\in M_{n}(\mathbb{Z}) $ such that $ (R^{-1}S,D) $ be an integral compatible pair. Then the following statements hold:

	$\rm(i)$ $ R^{-1}S $ is a spectrum of the measure $ \delta_{D} $;
	
	$\rm(ii)$ $ (R^{-1}(S+s),D+d) $ for $ s,d\in \mathbb{Z}^{n} $ and $ (R^{-1}S,-D) $ are integral compatible pairs;
	
    $\rm(iii)$ $ (R^{-k}S_{k},D_{k}) $ is an integral compatible pair for $ k\ge1 $, where $ S_{k}=S+RS+\dotsi+R^{k-1}S $
and $ D_{k}=D+R^{t}D+\dotsi+({R^{t}})^{k-1}D $;

    $\rm(iv)$ All elements in $ S $ $(resp.D)$ are in different coset of the group $ \mathbb{Z}^{n}/R\mathbb{Z}^{n} $ $($resp. $ \mathbb{Z}^{n}/R^{t}\mathbb{Z}^{n} $$)$;

	$\rm(v)$ Suppose that $ \widetilde{D}$, $ \widetilde{S}\subset\mathbb{Z}^{n} $ such that $ \widetilde{D}=D\pmod R^{t} $ and $ \widetilde{S}=S\pmod R $. Then
$ (R^{-1}\widetilde{S},\widetilde{D}) $ is an integral compatible pair;
	
	$\rm(vi)$ Suppose that all $ (R_{k}^{-1}S_{k},D_{k}) $ are integral compatible pairs for $ k\ge1 $. Then
\begin{equation*}
((R_{k}R_{k-1}\dotsi R_{1})^{-1}\widetilde{S}_{k},\widetilde{D}_{k})
\end{equation*}
	is an integral compatible pair for each $ k\ge1 $, where
\begin{equation*}
 \widetilde{D}_{k}=D_{1}+R_{1}^{t}D_{2}+\dotsi+R_{1}^{t}R_{2}^{t}\dotsi R_{k-1}^{t}D_{k},\quad\widetilde{S}_{k}=S_{k}+R_{k}S_{k-1}+\dotsi+R_{k}R_{k-1}\dotsi R_{2}S_{1}.
\end{equation*}
 \end{pro}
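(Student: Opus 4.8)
The plan is to reduce all six assertions to one reformulation of compatibility together with two elementary integrality facts. Writing out $HH^*=I$ for the matrix in the definition, $(R^{-1}S,D)$ is an integral compatible pair precisely when $\#S=\#D$ and $m_{D}\bigl(R^{-1}(s-s')\bigr)=0$ for every pair of distinct $s,s'\in S$; equivalently, the family $\{e^{2\pi i\langle R^{-1}s,\,\cdot\,\rangle}\}_{s\in S}$ is orthonormal in $L^{2}(\delta_{D})$. I would record at the outset the two facts used throughout: for $u,w\in\mathbb{Z}^{n}$ one has $\langle R^{-1}u,R^{t}w\rangle=\langle u,w\rangle\in\mathbb{Z}$ because $(R^{-1})^{t}R^{t}=I$, and $m_{E}$ is $\mathbb{Z}^{n}$-periodic for every finite $E\subset\mathbb{Z}^{n}$.

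Parts (i), (ii) and (iv) are then immediate. For (i), the $\#S=\#D$ exponentials above are orthonormal in the $\#D$-dimensional space $L^{2}(\delta_{D})$, hence form a basis, so $R^{-1}S$ is a spectrum of $\delta_{D}$. For (ii), replacing $S$ by $S+s_{0}$ and $D$ by $D+d_{0}$ turns the orthogonality sum for two rows into $e^{2\pi i\langle R^{-1}(s-s'),d_{0}\rangle}\sum_{d\in D}e^{2\pi i\langle R^{-1}(s-s'),d\rangle}$, so a unimodular scalar factors out and the sum still vanishes; and replacing $D$ by $-D$ conjugates the Hadamard matrix, which preserves unitarity. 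For (iv), if two elements $s,s'\in S$ satisfied $s-s'\in R\mathbb{Z}^{n}$ then $R^{-1}(s-s')\in\mathbb{Z}^{n}$, every entry $e^{2\pi i\langle R^{-1}(s-s'),d\rangle}=1$, and the orthogonality sum would equal $\#D\neq 0$, a contradiction; the claim for $D$ modulo $R^{t}\mathbb{Z}^{n}$ follows likewise, since by the integrality identity $d-d'\in R^{t}\mathbb{Z}^{n}$ forces two columns to coincide.

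The substance is in (iii) and (vi), and since (iii) is the special case of (vi) with $R_{k}\equiv R$, $S_{k}\equiv S$, $D_{k}\equiv D$, I would prove (vi) and specialize. First I would establish that the sums $\widetilde D_{k}=\sum_{j=1}^{k}R_{1}^{t}\cdots R_{j-1}^{t}D_{j}$ and $\widetilde S_{k}$ are direct (unique representation), peeling off one coset at a time: reducing modulo $R_{1}^{t}$ and invoking (iv) forces the $D_{1}$-coordinate, then one divides by $R_{1}^{t}$ and repeats. Directness yields the factorization $m_{\widetilde D_{k}}(\xi)=\prod_{j=1}^{k}m_{D_{j}}(P_{j-1}\xi)$ with $P_{j-1}=R_{j-1}\cdots R_{1}$ (and $P_{0}=I$), and also the count $\#\widetilde S_{k}=\prod_{j}\#S_{j}=\prod_{j}\#D_{j}=\#\widetilde D_{k}$. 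For orthogonality, given distinct $\widetilde s,\widetilde s'\in\widetilde S_{k}$ with direct-sum coordinates $(s_{j}),(s'_{j})$, I would let $l_{0}$ be the largest index with $s_{l_{0}}\neq s'_{l_{0}}$ and examine the $l_{0}$-th factor of $m_{\widetilde D_{k}}\bigl((R_{k}\cdots R_{1})^{-1}(\widetilde s-\widetilde s')\bigr)$. The telescoping identity $P_{l_{0}-1}(R_{k}\cdots R_{1})^{-1}=(R_{k}\cdots R_{l_{0}})^{-1}$, together with $(R_{k}\cdots R_{l_{0}})^{-1}(R_{k}\cdots R_{j+1})\in M_{n}(\mathbb{Z})$ for $j<l_{0}$ and $=R_{l_{0}}^{-1}$ for $j=l_{0}$, shows this factor equals $m_{D_{l_{0}}}\bigl(R_{l_{0}}^{-1}(s_{l_{0}}-s'_{l_{0}})+z\bigr)$ for some $z\in\mathbb{Z}^{n}$; by $\mathbb{Z}^{n}$-periodicity and the level-$l_{0}$ compatibility of $(R_{l_{0}}^{-1}S_{l_{0}},D_{l_{0}})$ it vanishes, hence so does the whole product. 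Thus the square Hadamard matrix has orthonormal rows and is unitary.

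Part (v) is another phase-factoring argument in the spirit of (ii): if $\widetilde s=s+Ru$ and $\widetilde d=d+R^{t}v$ with $u,v\in\mathbb{Z}^{n}$, then $\langle R^{-1}\widetilde s,\widetilde d\rangle-\langle R^{-1}s,d\rangle=\langle R^{-1}s,R^{t}v\rangle+\langle u,d\rangle+\langle u,R^{t}v\rangle$ is an integer by the integrality identity, so the two Hadamard matrices agree entrywise under the coset bijection and unitarity transfers. The main obstacle is the bookkeeping in (vi): correctly locating the single vanishing factor and verifying the telescoping matrix identities so that exactly the non-integer residue $R_{l_{0}}^{-1}(s_{l_{0}}-s'_{l_{0}})$ survives modulo $\mathbb{Z}^{n}$. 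Once that cancellation is pinned down, periodicity and the level-$l_{0}$ hypothesis close the argument, while the counting step guarantees the orthonormal system is in fact a basis.
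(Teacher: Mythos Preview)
Your proof is correct. The paper does not actually prove this proposition at all; it is stated as a well-known result with citations to \cite{LW1, S, S1} (\L aba--Wang and Strichartz), so there is no in-paper argument to compare against. Your write-up supplies a self-contained verification: the mask-function reformulation of compatibility, $\mathbb{Z}^{n}$-periodicity of $m_{E}$ for integer $E$, and the telescoping factorization in (vi) are exactly the standard ingredients. Two small points worth making explicit in a final version: the directness of $\widetilde{S}_{k}$ follows by the same peeling argument as for $\widetilde{D}_{k}$ but starting from reduction modulo $R_{k}$ (not $R_{1}^{t}$), since the outermost coefficient in $\widetilde{S}_{k}$ is $R_{k}$; and your choice of the \emph{largest} index $l_{0}$ with $s_{l_{0}}\neq s'_{l_{0}}$ is indeed the correct one, as choosing the smallest would leave non-integer matrix factors $(R_{k}\cdots R_{l_{0}})^{-1}R_{k}\cdots R_{j+1}$ for $j>l_{0}$ and spoil the reduction modulo $\mathbb{Z}^{n}$.
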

 From now on, for convenience, we  define
\begin{align*}
	&\mu:=\mu_{\{R_{k}\},\{D_{k}\}},\quad\mu_{l}:=\delta_{R_{1}^{-1}D_{1}}*\delta_{R_{1}^{-1}R_{2}^{-1}D_{2}}*\dotsi*\delta_{R_{1}^{-1}R_{2}^{-1}\dotsi R_{l}^{-1}D_{l}},\\
	&\mu_{l<k\le h}:=\delta_{R_{l+1}^{-1}D_{l+1}}*\delta_{R_{l+1}^{-1}R_{l+2}^{-1}D_{l+2}}*\dotsi*\delta_{R_{l+1}^{-1}R_{l+2}^{-1}\dotsi R_{h}^{-1}D_{h}},\\
	&\mu_{>l}:=\delta_{R_{1}^{-1}\cdots R_{l+1}^{-1}D_{l+1}}*\delta_{R_{1}^{-1}\cdots R_{l+1}^{-1}R_{l+2}^{-1}D_{l+2}}*\cdots,\\
&\mu_{k>l}:=\delta_{R_{l+1}^{-1}D_{l+1}}*\delta_{R_{l+1}^{-1}R_{l+2}^{-1}D_{l+2}}*\cdots
\end{align*}
and
\begin{equation}\label{2.2}
 \mu_{\{R\}_{k>l}}:=\mu_{k>l}\circ(R_{l+1}^{-1}R)=\delta_{R^{-1}D_{l+1}}*\delta_{R^{-1}R_{l+2}^{-1}D_{l+2}}*\dotsi.
\end{equation}
Thus
\begin{equation*}
\mu_{>l}=\mu_{k>l}\circ(R_{l}R_{l-1}\cdots R_{1})\;\text{and} \; 	
\mu=\mu_{l}*\mu_{>l}=\mu_{l}*\mu_{k>l}\circ(R_{l}R_{l-1}\cdots R_{1}).
    \end{equation*}
The following lemma tells us that the spectrality of $\mu$ is invariant under a linear transformation.
  \begin{lemma}\label{lem2.6}
  Let $ R\in M_{n}(\mathbb{Z}) $ be a nonsingular matrix, and let $ \mu_{\{R\}_{k>0}} $ be defined by  \eqref{2.2}. Then $ \mu=\mu_{\{R\}_{k>0}}\circ(R^{-1}R_{1}) $ and $ ( \mu,\Lambda) $ is a spectral pair if and only if $ \mu_{\{R\}_{k>0}} $ is a spectral measure with spectrum $ {R^{t}R_{1}^{t}}^{-1}\Lambda $.
  \end{lemma}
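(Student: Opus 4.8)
The plan is to deduce the lemma from the elementary behaviour of the Fourier transform under a linear change of variables, together with the Jorgensen--Pedersen criterion (Lemma \ref{lem2.2}).

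\emph{Step 1: the identity $\mu=\mu_{\{R\}_{k>0}}\circ(R^{-1}R_1)$.} Taking $l=0$ in \eqref{2.2} gives $\mu_{\{R\}_{k>0}}=\mu_{k>0}\circ(R_1^{-1}R)$, and $\mu_{k>0}$ is by definition $\mu$. Since linear changes of variables compose by $(\rho\circ T)\circ T'=\rho\circ(TT')$ and $(R_1^{-1}R)(R^{-1}R_1)=I$, composing with $R^{-1}R_1$ on the right returns $\mu$. One may also check this factor by factor on Fourier transforms: with the rule $\widehat{\rho\circ T}(\xi)=\widehat{\rho}\big((T^{-1})^{t}\xi\big)$ and $T=R^{-1}R_1$, a short computation (using $RR^{-1}=I$) shows that the $j$-th Dirac factor of $\mu_{\{R\}_{k>0}}$ evaluated at $(T^{-1})^{t}\xi$ equals the $j$-th Dirac factor of $\mu$ evaluated at $\xi$ — concretely, the matrix $(T^{-1})^{t}=R^{t}R_1^{-t}$ replaces the leading $R^{-1}$ by $R_1^{-1}$ — and multiplying over $j$ recovers $\widehat{\mu}$.

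\emph{Step 2: transfer of spectrality.} Set $T=R^{-1}R_1$. By Step 1 and the change-of-variables formula, $\int|f|^{2}\,d\mu=\int|f\circ T^{-1}|^{2}\,d\mu_{\{R\}_{k>0}}$ for every $f$, so $Uf:=f\circ T^{-1}$ defines a unitary isomorphism $U\colon L^{2}(\mu)\to L^{2}(\mu_{\{R\}_{k>0}})$. A direct computation gives $U\big(e^{-2\pi i\langle\lambda,\,\cdot\,\rangle}\big)=e^{-2\pi i\langle (T^{-1})^{t}\lambda,\,\cdot\,\rangle}$, hence $U(E_{\Lambda})=E_{(T^{-1})^{t}\Lambda}$, where $(T^{-1})^{t}=R^{t}R_1^{-t}$ is the matrix ${R^{t}R_{1}^{t}}^{-1}$ appearing in the statement. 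A unitary operator carries orthonormal families to orthonormal families and orthonormal bases to orthonormal bases, so $E_{\Lambda}$ is an orthonormal basis for $L^{2}(\mu)$ if and only if $E_{{R^{t}R_{1}^{t}}^{-1}\Lambda}$ is an orthonormal basis for $L^{2}(\mu_{\{R\}_{k>0}})$, which is exactly the claim. Alternatively, avoiding unitaries: from $\widehat{\mu}(\xi)=\widehat{\mu_{\{R\}_{k>0}}}\big((T^{-1})^{t}\xi\big)$ one obtains $Q_{\mu,\Lambda}(\xi)=Q_{\mu_{\{R\}_{k>0}},\,{R^{t}R_{1}^{t}}^{-1}\Lambda}\big((T^{-1})^{t}\xi\big)$; since $(T^{-1})^{t}\xi$ runs over all of $\mathbb{R}^{n}$ as $\xi$ does, one of these functions is identically $1$ exactly when the other is (and $\le 1$ exactly when the other is), so Lemma \ref{lem2.2} settles both the basis and the orthogonality statements at once.

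Both steps are essentially formal, so there is no serious obstacle; the only point requiring care is keeping the transposes and the order of the matrix product straight — e.g.\ $(T^{-1})^{t}=R^{t}R_1^{-t}$, and not $R_1^{-t}R^{t}$ — so that the spectrum emerges exactly as ${R^{t}R_{1}^{t}}^{-1}\Lambda$. We note that this lemma is what will permit, in the later sections, replacing the first contraction matrix $R_1$ by an arbitrary nonsingular $R$: spectrality, together with the precise form of the spectra, is transported unchanged under such a substitution.
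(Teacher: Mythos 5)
Your proposal is correct and follows essentially the same route as the paper: the paper also verifies $\mu=\mu_{\{R\}_{k>0}}\circ(R^{-1}R_{1})$ directly and then transfers spectrality via the identity $Q_{\mu,\Lambda}(\xi)=Q_{\mu_{\{R\}_{k>0}},\,R^{t}R_{1}^{t^{-1}}\Lambda}\bigl(R^{t}R_{1}^{t^{-1}}\xi\bigr)$ together with Lemma \ref{lem2.2}, which is exactly your "alternative" computation (your unitary-operator phrasing is just a repackaging of the same change of variables). Your bookkeeping of the transpose, $(T^{-1})^{t}=R^{t}R_{1}^{-t}$, matches the spectrum claimed in the statement.
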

%  \begin{proof}
%It is straightforward to show that \( \mu = \mu_{\{R\}_{k>0}} \circ (R^{-1} R_{1}) \).
%To prove the second result, we begin with
%\begin{equation*}
%Q_{\mu, \Lambda}(\xi) = \sum_{\lambda \in \Lambda} |\widehat{\mu}(\xi + \lambda)|^2.
%\end{equation*}
%This can be expressed as
%\begin{equation*}
%Q_{\mu, \Lambda}(\xi) = \sum_{\lambda \in \Lambda} |\widehat{\mu}_{\{R\}_{k>0}} R^{t} R_{1}^{t^{-1}}(\xi + \lambda)|^2,
%\end{equation*}
%which simplifies to
%\begin{equation*}
%Q_{\mu_{\{R\}_{k>0}}, R^{t} R_{1}^{t^{-1}} \Lambda}(R^{t} R_{1}^{t^{-1}} \xi), \quad \xi \in \mathbb{R}^{n}.
%\end{equation*}
%Therefore, Lemma \ref{lem2.2} confirms our second conclusion.
%  \end{proof}
At the end of this section, we introduce the concept of equicontinuity. A family of functions \( F \subset C(\mathbb{R}^{n}) \) is called equicontinuous if for any \( \epsilon > 0 \), there exists \( \delta > 0 \) such that whenever \( \|x - y\| \le \delta \), we have \( |f(x) - f(y)| < \epsilon \) for all \( f \in F \), where \( \|\cdot\| \) denotes the Euclidean norm on \( \mathbb{R}^{n} \). The following lemma is also well-known.
\begin{lemma}\label{lem2.8}
Let \( I \) be a compact set in \( \mathbb{R}^{n} \). Then the set \( F(I) \), which denotes the Fourier transforms of all Borel probability measures supported on the compact set \( I \subset \mathbb{R}^{n} \), is equicontinuous.
\end{lemma}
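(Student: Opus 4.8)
The plan is to estimate the modulus of continuity of $\widehat{\nu}$ uniformly over all Borel probability measures $\nu$ supported on the fixed compact set $I$. First I would fix $R>0$ with $I\subset \overline{B(0,R)}$. For $\nu$ supported on $I$ and $x,y\in\mathbb{R}^n$, I would write
\begin{equation*}
\widehat{\nu}(x)-\widehat{\nu}(y)=\int_{I}\bigl(e^{-2\pi i\langle x,t\rangle}-e^{-2\pi i\langle y,t\rangle}\bigr)\,d\nu(t),
\end{equation*}
so that
\begin{equation*}
|\widehat{\nu}(x)-\widehat{\nu}(y)|\le \int_{I}\bigl|e^{-2\pi i\langle x,t\rangle}-e^{-2\pi i\langle y,t\rangle}\bigr|\,d\nu(t)
=\int_{I}\bigl|e^{-2\pi i\langle x-y,t\rangle}-1\bigr|\,d\nu(t).
\end{equation*}
The key point is that the integrand is bounded by a quantity independent of $\nu$: using $|e^{i\theta}-1|\le|\theta|$ we get $\bigl|e^{-2\pi i\langle x-y,t\rangle}-1\bigr|\le 2\pi\|x-y\|\,\|t\|\le 2\pi R\|x-y\|$ for every $t\in I$.

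Next I would combine this with the fact that $\nu$ is a probability measure, so $\nu(I)=1$, giving the uniform bound
\begin{equation*}
|\widehat{\nu}(x)-\widehat{\nu}(y)|\le 2\pi R\,\|x-y\|\qquad\text{for all }x,y\in\mathbb{R}^n,
\end{equation*}
valid for every Borel probability measure $\nu$ supported on $I$. In other words the whole family $F(I)$ is uniformly Lipschitz with constant $2\pi R$. Given $\epsilon>0$, choosing $\delta=\epsilon/(2\pi R+1)$ then yields $|\widehat{\nu}(x)-\widehat{\nu}(y)|<\epsilon$ whenever $\|x-y\|\le\delta$, uniformly in $\nu\in F(I)$, which is precisely equicontinuity.

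There is essentially no serious obstacle here: the argument is a one-line estimate once one exploits that the support is contained in a fixed ball (so $\|t\|$ is bounded uniformly) and that the total mass is $1$ (so the integral of the uniform bound is again that same bound). The only thing to be slightly careful about is to use the elementary inequality $|e^{i\theta}-1|\le\min\{|\theta|,2\}$ rather than anything fancier, and to note that the Lipschitz constant depends only on $I$ (through $R$), not on the individual measure. If one prefers to avoid the explicit Lipschitz bound, an alternative is a compactness/contradiction argument via Prokhorov's theorem — the set of probability measures on the compact $I$ is weak-$*$ compact, and $\nu\mapsto\widehat{\nu}$ is continuous into $C(\mathbb{R}^n)$ with the topology of uniform convergence on compacts — but the direct estimate above is shorter and self-contained, so that is the route I would take.
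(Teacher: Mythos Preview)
Your proof is correct: the direct Lipschitz estimate via $|e^{i\theta}-1|\le|\theta|$ gives a uniform modulus of continuity depending only on the radius $R$ of a ball containing $I$, which is exactly equicontinuity. The paper itself does not supply a proof of this lemma---it is stated as well-known---so there is nothing to compare against; your argument is the standard one and would serve perfectly well as the omitted proof.
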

%\begin{proof}
%Using the elementary inequality \( |e^{ix} - 1| \leq |x| \) for any \( x \in \mathbb{R} \), it follows that for any \( \widehat{\mu} \in F(I) \),
%\begin{align*}
%|\widehat{\mu}(x)-\widehat{\mu}(y)|=&|\int_{I}e^{-2\pi i  \xi x } d\mu(\xi)-\int_{I}e^{-2\pi i  \xi y } d\mu(\xi)|\\
%\le&\int_{I}|e^{-2\pi i  \xi (x-y) } -1|d\mu(\xi)\\
%\le&\int_{I}\|\xi\|\mu(\xi)2\pi \| x-y \|\\
%\le&C \|x-y\|.
%\end{align*}
%The equicontinuity follows from this inequality since the upper Lipschitz bound is independent of \( \widehat{\mu} \).
%\end{proof}

\section{\bf A sufficient condition of spectral measure $\mu_{\{R_{k}\}, \{D_{k}\}}$}
 In this section, we prove Theorem \ref{thm1.4}. First, we introduce some necessary concepts. For any $ R\in M_{n}(\mathbb{Z}) $, recall that
$ \| R\|^{'}=\sup_{x\neq0}\dfrac{\|Rx\|}{\| x\|} ,$
where $ \|\cdot\| $ denotes the Euclidean norm. In \( \mathbb{R}^{n} \), the matrix norm \( \| \cdot\|^{'} \) and the Euclidean norm \( \| \cdot\| \) are equivalent. Therefore, there exists a constant $ c\ge1 $ such that
$c^{-1}\|\cdot\|^{'}\le\|\cdot\|\le c\|\cdot\|^{'}.$
Hence,
\begin{equation}\label{3.2}
\| {R_{j}^{t}}^{-1}\|^{'}\le r,\;1\le j\le k \Longrightarrow \| {R_{1}^{t}}^{-1}{R_{2}^{t}}^{-1}\dotsi {R_{k}^{t}}^{-1}\|^{'}\le c^{2} r^{k}.
\end{equation}
For that purpose, we need the following two technical lemmas.
\begin{lemma}\label{lem3.1}
Let \( \{ R_k \}_{k=1}^{\infty} \subset M_n(\mathbb{Z} ) \) satisfy \eqref{a} and \( \| R_k^{-1} \|' \le r < 1 \) for \( k \ge 1 \). There exist constants \( M \) and \( \gamma \), as well as a monotonic increasing positive sequence \( \gamma_{i} \) that converges to 1, such that
\begin{equation*}
|\widehat{\mu}_{k>q}(\xi)| \geq \gamma_{i} \gamma^{i}
\end{equation*}
for all \( i \geq M \), \( q \geq N \) and \( \xi \in [-\frac{1}{2}-\delta, \frac{1}{2}+\delta]^{n} \), where $N$ and $\delta$ are given by \eqref{a}.
\end{lemma}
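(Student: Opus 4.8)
The plan is to establish a uniform lower bound on $|\widehat{\mu}_{k>q}(\xi)|$ on the cube $[-\frac12-\delta,\frac12+\delta]^n$ by splitting the infinite product defining $\widehat{\mu}_{k>q}$ into a finite ``head'' of length $p$ and an infinite ``tail'', controlling each part separately using the geometric decay $\|R_k^{-1}\|'\le r<1$ together with the separation hypothesis \eqref{a}. Recall that
\begin{equation*}
\widehat{\mu}_{k>q}(\xi)=\prod_{j=1}^{\infty} m_{D_{q+j}}\!\left( (R_{q+1}^t R_{q+2}^t\cdots R_{q+j}^t)^{-1}\xi\right).
\end{equation*}

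First I would treat the tail. For $j$ large the argument $(R_{q+1}^t\cdots R_{q+j}^t)^{-1}\xi$ is small: by \eqref{3.2} its norm is at most $c^2 r^{j}(\tfrac12+\delta)\sqrt n$, which tends to $0$ uniformly in $\xi$ and in $q$. Since each $m_{D_{q+j}}$ is an exponential sum with $m_{D_{q+j}}(0)=1$, and the digits $D_k$ are uniformly bounded by \eqref{1.2}, the functions $m_{D_k}$ are uniformly Lipschitz near $0$; hence there is an index $M_0$ (independent of $q$) so that for $j\ge M_0$ one has $|m_{D_{q+j}}(\cdot)|\ge 1-C c^2 r^{j}$ on the relevant set, and the tail product $\prod_{j\ge M_0}$ converges to some $\gamma>0$ that is bounded below uniformly; more precisely $\prod_{j\ge i}|m_{D_{q+j}}(\cdots)|\ge \gamma_i$ for a sequence $\gamma_i\uparrow 1$, obtained from $\gamma_i=\prod_{j\ge i}(1-Cc^2 r^j)$.

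Next I would treat the head $\prod_{j=1}^{i-1} m_{D_{q+j}}((R_{q+1}^t\cdots R_{q+j}^t)^{-1}\xi)$ for $q\ge N$. Here the point of \eqref{a} is exactly that $R_{q+1}^t\cdots R_{q+j}^t\in\mathcal A_{\delta,\beta}$, which forces $(R_{q+1}^t\cdots R_{q+j}^t)^{-1}[-\frac12-\delta,\frac12+\delta]^n$ to stay at distance at least $\beta$ from $\bigcup_k\mathcal Z(m_{D_k})$, so each factor satisfies $|m_{D_{q+j}}(\cdots)|\ge \eta$ for some fixed $\eta=\eta(\beta)>0$ (using continuity of the finitely-many-up-to-periodicity mask polynomials and compactness, plus that $m_{D_k}$ is $\mathbb Z^n$-periodic). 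Thus the head is bounded below by $\eta^{i-1}\ge \gamma^{i}$ after enlarging constants and choosing $\gamma\le\eta$ appropriately (possibly absorbing the finitely many indices $j$ with $q+j\le N$ into the constant $M$). Combining head and tail gives $|\widehat{\mu}_{k>q}(\xi)|\ge \gamma_i\gamma^i$ for all $i\ge M$, $q\ge N$.

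The main obstacle I anticipate is making the constant $\eta$ in the head estimate genuinely uniform: the digit sets $D_k$ vary with $k$, so ``$m_{D_k}$ bounded away from zero off the $\beta$-neighborhood of its zero set'' must be uniform in $k$. This is where the standing hypothesis \eqref{1.2} (uniformly bounded digits) is essential — it confines all $m_{D_k}$ to a compact family of trigonometric polynomials of bounded degree, so a single modulus-of-continuity and a single lower bound $\eta(\beta)$ work for all $k$. A secondary technical point is bookkeeping the finitely many early indices ($k\le N$, where \eqref{a} is not assumed) and the transition index between ``head'' and ``tail'' so that the stated clean form $\gamma_i\gamma^i$ with $\gamma_i\uparrow1$ emerges; this is routine once the two uniform estimates above are in hand.
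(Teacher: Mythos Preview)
Your proposal is correct and follows essentially the same approach as the paper: split the product at index $i$, use hypothesis \eqref{a} together with the uniform bound on digits to get a fixed lower bound $\gamma$ on each head factor, and use the geometric decay of $\|(R_{q+1}^t\cdots R_{q+j}^t)^{-1}\xi\|$ plus uniform Lipschitz continuity of the masks near $0$ to handle the tail. The only cosmetic difference is that the paper expresses the tail bound as $\gamma_i=\exp\{-4\sqrt{n}c^2\pi s\,r^{i+1}/(1-r)\}$ via the inequality $-\ln x\le 2(1-x)$, whereas you write it as the equivalent product $\prod_{j\ge i}(1-Cc^2 r^j)$; also note that since $q\ge N$ already gives $q+j>N$ for all $j\ge1$, no ``early indices'' need to be absorbed.
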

\begin{proof}
We can easily decompose
$|\widehat{\mu}_{k>q}(\xi)|$ into the product of the following two parts:
\begin{equation}\label{3.3}
 |\widehat{\mu}_{k>q}(\xi)|=|\widehat{\mu}_{q<k\le q+i}(\xi)||\widehat{\mu}_{k>q+i}({R_{q+i}^{t}}^{-1}\dotsi {R_{q+1}^{t}}^{-1}\xi)|.
\end{equation}
Next, we will estimate these two parts. From \eqref{3.2}, it follows that $  \| {R_{q+i}^{t}}^{-1}\dotsi {R_{q+1}^{t}}^{-1}\|^{'} \le c^{2}r^{i} $. Since $ 0<\delta<\frac{1}{4} $, one gets that
\begin{equation}\label{3.4}
 \| {R_{q+i}^{t}}^{-1}\dotsi {R_{q+1}^{t}}^{-1}\xi\|\le c^{2}r^{i}\|\xi\| \le \frac{3\sqrt{n}}{2}c^{2}r^{i}<2\sqrt{n}c^{2} \;\;\;\text{for}\; \xi \in[-\frac{1}{2}-\delta,\frac{1}{2}+\delta]^{n}.
\end{equation}
Hence, for any $ q\ge N $ and $ i\ge 1 $, it follows that
\begin{equation*}
{R_{q+i}^{t}}^{-1}\dotsi {R_{q+1}^{t}}^{-1}[-\frac{1}{2}-\delta,\frac{1}{2}+\delta]^{n}\subset B(0,2\sqrt{n}c^{2})\backslash(\cup_{j=1}^{\infty}\mathcal{Z}(m_{D_{j}}))_{\beta}.
\end{equation*}
For each $k$, we define
$\eta_{k}:=\inf\{|m_{D_{k}}(x)|:x\in B(0,2\sqrt{n}c^{2})\backslash(\cup_{j=1}^{\infty}\mathcal{Z}(m_{D_{j}}))_{\beta}\}.$
Clearly, $\eta_{k}>0. $ Then according to $\sup\{\|d\|:d\in D_{k},\;k\ge1\}<\infty$, we get $\gamma:=\min\{\eta_{k}:k\ge1\}>0.$
Therefore,
\begin{equation}\label{3.5}
 |\widehat{\mu}_{q<k\le q+i}(\xi)|=\prod_{j=1}^{i}|m_{D_{k}}({R_{q+j}^{t}}^{-1}\dotsi {R_{q+1}^{t}}^{-1}\xi)|\ge \gamma^{i}
\end{equation}
for any $q\ge N$, $i\ge 1$ and $\;\xi \in [-\frac{1}{2}-\delta,\frac{1}{2}+\delta]^{n}.$

Now, let us consider the latter term of $\eqref{3.3}$. Note that $\sup\{\|d\|:d\in D_{k},\;k\ge1\}<\infty$, we have that $  \cup_{k=1}^{\infty}\{D_{k}\} $ is a compact set. It follows from Lemma \ref{lem2.8} that the set $ \{m_{D_{k}}:k\ge1\}  $ is equicontinuous. By the continuity of $ m_{D_{k}}(x) $, as stated in \eqref{3.4}, and considering that \( m_{D_{k}}(0) = 1 \), there exists an \( M \) that depends only on \( c \) and \( r \) such that
\begin{equation*}
|m_{D_{k}}({R_{q+i}^{t}}^{-1}\dotsi {R_{q+1}^{t}}^{-1}\xi)| \ge \frac{1}{2}
\end{equation*}
for all $k\ge1$, $ i\ge M,\;q\ge N ,\;\text{and}\; \xi\in [-\frac{1}{2}-\delta,\frac{1}{2}+\delta]^{n}.$ Write $ s:=\max\{\| d\|:d\in \cup_{k=1}^{\infty}D_{k}\} $. It is easy to check that $ |m_{D_{k}}(x)-1|\le 2s\pi\| x\| $ and $ -\ln x\le2(1-x)\; \text{for}\;\frac{1}{2}\le x\le1.$ Then,
\begin{align*}
-\ln|\widehat{\mu}_{k>q+i}({R_{q+i}^{t}}^{-1}\dotsi {R_{q+1}^{t}}^{-1}\xi)|=&\sum_{j=1}^{\infty}-\ln|m_{D_{q+i+j}}({R_{q+i+j}^{t}}^{-1}\dotsi {R_{q+1}^{t}}^{-1}\xi)|\\
%\le& 2\sum_{j=1}^{\infty}(1-|m_{D_{q+i+j}}({R_{q+i+j}^{t}}^{-1}\dotsi {R_{q+1}^{t}}^{-1}\xi)|)\\
\le& 2\sum_{j=1}^{\infty}|1-m_{D_{q+i+j}}({R_{q+i+j}^{t}}^{-1}\dotsi {R_{q+1}^{t}}^{-1}\xi)|\\
\le& 4\pi s\sum_{j=1}^{\infty}\| {R_{q+i+j}^{t}}^{-1}\dotsi {R_{q+1}^{t}}^{-1}\xi\| \le 4\sqrt{n}c^{2}\pi s\frac{r^{i+1}}{1-r}.
\end{align*}
This implies
\begin{equation}\label{3.6}
 |\widehat{\mu}_{k>q+i}({R_{q+i}^{t}}^{-1}\dotsi {R_{q+1}^{t}}^{-1}\xi)|= \prod_{j=1}^{\infty}|m_{D_{k}}({R_{q+i+j}^{t}}^{-1}\dotsi {R_{q+1}^{t}}^{-1}\xi)|\ge\gamma_{i},
\end{equation}
where $ \gamma_{i}=\exp\{-4\sqrt{n}c^{2}\pi s\frac{r^{i+1}}{1-r}\} $. Clearly $ \gamma_{i}\rightarrow1 $ as $ i\rightarrow\infty $. Combining \eqref{3.5} and \eqref{3.6}, we complete the proof.
\end{proof}
The crucial step in demonstrating that \( \mu \) is a spectral measure is constructing its spectrum. This construction method is based on Lemma \ref{lem2.4}. For some suitable integers $ K $, $ k\ge0 $, we write
\begin{align*}
 \widetilde{D}_{k}=&D_{(k+1)K}+R_{(k+1)K}D_{(k+1)K-1}+R_{(k+1)K}R_{(k+1)K-1}D_{(k+1)K-2}+\dotsi\\ &+R_{(k+1)K}R_{(k+1)K-1}\dotsi R_{kK+2}D_{kK+1}
 \end{align*}
and
\begin{equation*}
 \widetilde{R}_{k}=R_{(k+1)K}R_{(k+1)K-1}\dotsi R_{kK+1}.
\end{equation*}
It is not hard to see that
\begin{equation*}
 \mu=\delta_{\widetilde{R}_{0}^{-1}\widetilde{D}_{0}}*\delta_{\widetilde{R}_{0}^{-1}\widetilde{R}_{1}^{-1}\widetilde{D}_{1}}*\dotsi*\delta_{\widetilde{R}_{0}^{-1}\widetilde{R}_{1}^{-1}\dotsi \widetilde{R}_{k}^{-1}\widetilde{D}_{k}}*\dotsi
\end{equation*}
and
\begin{equation*}
 |\widehat{\mu}(\xi)|=\prod_{k=0}^{\infty}|m_{\widetilde{D}_{k}}((\widetilde{R}_{0}^{t}\dotsi \widetilde{R}_{k}^{t})^{-1}\xi)|.
\end{equation*}
Let $ c_{k,i}^{(l)}=l\nu_{k,i}\pmod {m\mathbb{Z}^{n}} $ such that $ \frac{c_{k,i}^{(l)}}{m}\subset (-\frac{1}{2},\frac{1}{2}]^{n} $, where $ l\in\{1,2,\cdots,m-1\} $, $i\in\{1,\cdots,\phi(k)\}$. Furthermore, we can denote $ C_{k,i}=\{0,c_{k,i}^{(1)},c_{k,i}^{(2)},\cdots,c_{k,i}^{(m-1)}\} .$ For each \( k \) and any \( i\in\{1,\cdots,\phi(k)\} \), $ (R_{k}^{-1}D_{k},\frac{1}{m}R_{k}^{t}C_{k,i}) $ is compatible pair.

 \begin{lemma}\label{lem3.2}
 Suppose that for each \( k\ge1 \), $ \| R_{k}^{-1}\|^{'}\le r$ and there exists \( i_{k}\in\{1,\cdots,\phi(k)\} \) such that $R_{k}^{t}\nu_{k,i_{k}}\in m\mathbb{Z}^{n}$. Then there exist $ L_{k}\subset \mathbb{Z}^{n} $ with $ 0\in L_{k} $ and $ K\ge M $ such that $ (\widetilde{R}_{k}^{-1}\widetilde{D}_{k},L_{k}) $ is an integral compatible pair, and
\begin{equation}\label{3.7}
(\widetilde{R}_{0}^{t}\dotsi \widetilde{R}_{k}^{t})^{-1}L_{0}+\dotsi+(\widetilde{R}_{k-1}^{t}\widetilde{R}_{k}^{t})^{-1}L_{k-1}+(\widetilde{R}_{k}^{t})^{-1}L_{k}\subset[-\frac{1}{2}-\dfrac{1}{4}\delta,\frac{1}{2}+\dfrac{1}{4}\delta]^{n}
\end{equation}
 for $ k\ge0, $ where $ M $ is given by Lemma \ref{lem3.1} and $\delta$ is defined by \eqref{a}.
 \end{lemma}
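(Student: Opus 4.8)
The candidate digit sets $L_k$ should be assembled from the compatible pairs living inside the $k$-th block of length $K$, then shrunk to small representatives, after which \eqref{3.7} becomes a routine geometric-series estimate.

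\textbf{Step 1 (building $L_k$).} For each $j\ge 1$ the hypothesis $\tfrac1m\nu_{j,i_j}^{t}R_j\in\mathbb Z^n$ gives $\tfrac1m R_j^{t}\nu_{j,i_j}\in\mathbb Z^n$; since every nonzero element of $C_{j,i_j}$ is $\equiv l\nu_{j,i_j}\pmod{m\mathbb Z^n}$, it follows that $\tfrac1m R_j^{t}C_{j,i_j}\subset\mathbb Z^n$, so $(R_j^{-1}D_j,\tfrac1m R_j^{t}C_{j,i_j})$ is an \emph{integral} compatible pair (recall from the paragraph preceding the lemma that it is a compatible pair). Applying Proposition~\ref{lem2.4}(vi) to the block of indices $j=kK+1,\dots,(k+1)K$, with the roles "$S_j$"$=D_j$ and "$D_j$"$=\tfrac1m R_j^{t}C_{j,i_j}$, one obtains that $(\widetilde R_k^{-1}\widetilde D_k,\,L_k^{0})$ is an integral compatible pair, where
$$L_k^{0}:=\sum_{i=kK+1}^{(k+1)K}\tfrac1m\bigl(R_iR_{i-1}\cdots R_{kK+1}\bigr)^{t}C_{i,i_i}\subset\mathbb Z^n,$$
and $0\in L_k^{0}$ because $0\in C_{i,i_i}$ for every $i$. (Here one checks the index conventions of Proposition~\ref{lem2.4}(vi) match: the "$\widetilde S$"-part of the block equals $\widetilde D_k$, the product of block matrices equals $\widetilde R_k$, and the "$\widetilde D$"-part equals $L_k^{0}$.)

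\textbf{Step 2 (small representatives).} The set $\widetilde R_k^{t}[-\tfrac12,\tfrac12)^n$ is a fundamental domain of the lattice $\widetilde R_k^{t}\mathbb Z^n$ in $\mathbb R^n$, so each coset of $L_k^{0}$ modulo $\widetilde R_k^{t}\mathbb Z^n$ has a unique representative in it, and that representative lies in $\mathbb Z^n$. Let $L_k$ be the set of these representatives. Then $L_k\subset\mathbb Z^n$, $0\in L_k$ (the coset of $0$ is represented by $0$), $(\widetilde R_k^{t})^{-1}L_k\subset[-\tfrac12,\tfrac12)^n$, and since $L_k\equiv L_k^{0}\pmod{\widetilde R_k^{t}}$, Proposition~\ref{lem2.4}(v) guarantees that $(\widetilde R_k^{-1}\widetilde D_k,\,L_k)$ is still an integral compatible pair. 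This is the key point: the "raw" set $L_k^{0}$ can have large entries, so $(\widetilde R_k^{t})^{-1}L_k^{0}$ need not lie near the origin, whereas the reduced $L_k$ does.

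\textbf{Step 3 (choice of $K$ and verification of \eqref{3.7}).} Since $r<1$, fix $K\ge M$ so large that $\tfrac{\sqrt n}{2}c^{2}\tfrac{r^{K}}{1-r^{K}}\le\tfrac14\delta$, where $c$ is the constant in \eqref{3.1}. Fix $k\ge 0$ and choose any $\ell_j\in L_j$ for $0\le j\le k$. The top term $(\widetilde R_k^{t})^{-1}\ell_k$ lies in $[-\tfrac12,\tfrac12)^n$ by Step 2. For $j<k$ write $\ell_j=\widetilde R_j^{t}y_j$ with $y_j\in[-\tfrac12,\tfrac12)^n$; the cancellation $(\widetilde R_j^{t}\widetilde R_{j+1}^{t}\cdots\widetilde R_k^{t})^{-1}\widetilde R_j^{t}=(\widetilde R_{j+1}^{t}\cdots\widetilde R_k^{t})^{-1}$ gives $(\widetilde R_j^{t}\cdots\widetilde R_k^{t})^{-1}\ell_j=(\widetilde R_{j+1}^{t}\cdots\widetilde R_k^{t})^{-1}y_j$, and since $\widetilde R_{j+1}\cdots\widetilde R_k$ is a product of $(k-j)K$ matrices $R_\ell$ with $\|R_\ell^{-1}\|'\le r$, estimate \eqref{3.2} yields $\|(\widetilde R_{j+1}^{t}\cdots\widetilde R_k^{t})^{-1}\|'\le c^{2}r^{(k-j)K}$, hence $\|(\widetilde R_j^{t}\cdots\widetilde R_k^{t})^{-1}\ell_j\|\le\tfrac{\sqrt n}{2}c^{2}r^{(k-j)K}$. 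Summing the geometric series, $\bigl\|\sum_{j=0}^{k-1}(\widetilde R_j^{t}\cdots\widetilde R_k^{t})^{-1}\ell_j\bigr\|\le\tfrac{\sqrt n}{2}c^{2}\tfrac{r^{K}}{1-r^{K}}\le\tfrac14\delta$, so the full sum lies in $[-\tfrac12-\tfrac14\delta,\tfrac12+\tfrac14\delta]^n$. As the $\ell_j$ were arbitrary, \eqref{3.7} follows.

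\textbf{Main obstacle.} The only real subtlety is the bookkeeping around Step 2: one must recognize that the compatible pair freedom afforded by Proposition~\ref{lem2.4}(v) is needed to pass from $L_k^{0}$ (whose entries grow like the norms of products of the $R_i$'s, so that $(\widetilde R_k^{t})^{-1}L_k^{0}$ is not controlled) to a reduced $L_k$ with $(\widetilde R_k^{t})^{-1}L_k\subset[-\tfrac12,\tfrac12)^n$; matching the index conventions of Proposition~\ref{lem2.4}(vi) to the block notation $\widetilde R_k,\widetilde D_k$ and verifying $L_k^{0}\subset\mathbb Z^n$ are the other places where care is required. Once these are in place, Steps 1 and 3 are mechanical, and \eqref{a} enters only through the choice of the slack parameter $\delta$.
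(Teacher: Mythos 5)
Your proposal is correct and follows essentially the same route as the paper: you assemble the block digit set $L_k^{0}$ (the paper's $\widetilde{C}_k$) from the integral compatible pairs $(R_j^{-1}D_j,\tfrac1m R_j^{t}C_{j,i_j})$ via Proposition~\ref{lem2.4}(vi), reduce it modulo $\widetilde{R}_k^{t}$ to representatives in a fundamental domain (the paper phrases this through the complete residue sets $N_k$ and $E_k$, invoking Proposition~\ref{lem2.4}(v) just as you do), and then obtain \eqref{3.7} from the same geometric-series bound $\tfrac{\sqrt n}{2}c^{2}\tfrac{r^{K}}{1-r^{K}}\le\tfrac14\delta$. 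The only differences are cosmetic bookkeeping, so nothing further is needed.
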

 \begin{proof}
 Let $ N_{k}=\widetilde{R}_{k}^{t}(-\frac{1}{2},\frac{1}{2}]^{n}\cap\mathbb{Z}^{n}$ for $ k\ge0 $. Then \( N_{k} \) is a complete residue set modulo \( \widetilde{R}_{k}^{t} \).
 Similarly, let $ E_{k}=R_{k}^{t}(-\frac{1}{2},\frac{1}{2}]^{n}\cap\mathbb{Z}^{n}$. It is easy to show that
\begin{equation*}
 E_{kK+1} + R_{kK+1}^{t} E_{kK+2} + \cdots + R_{kK+1}^{t} R_{kK+2}^{t} \cdots R_{kK+K-1}^{t} E_{(k+1)K}
\end{equation*}
is also a complete residue set modulo \( \widetilde{R}_{k}^{t} \). Note that for each \( k \) and any \( i \in \{1, \cdots, \phi(k)\} \), \((R_{k}^{-1} D_{k}, \frac{1}{m} R_{k}^{t} C_{k,i})\) is a compatible pair. Based on the assumption that $R_{k}^{t}\nu_{k,i_{k}}\in m\mathbb{Z}^{n}$, we can choose these \( i_{k}\in\{1,\cdots,\phi(k)\} \) such that \begin{equation*}
	\widetilde{C}_{k}:=\frac{1}{m}(R_{kK+1}^{t} C_{kK+1,i_{kK+1}}+R_{kK+1}^{t}R_{kK+2}^{t} C_{kK+2,i_{kK+2}}+\dotsi+R_{kK+1}^{t}\dotsi R_{kK+k}^{t} C_{kK+k,i_{kK+k}}).
\end{equation*}
Then $ ({\widetilde{R}_{k}}^{-1}\widetilde{D}_{k},\widetilde{C}_{k}) $ is compatible pair for each $ k\ge0 $ by Lemma \ref{lem2.4} and $ (R_{k}^{-1}D_{k},\frac{1}{m}R_{k}^{t}C_{k,i_{k}}) $ is compatible pair. From $R_{k}^{t}\nu_{k,i_{k}}\in m\mathbb{Z}^{n}$, it follows that $\frac{1}{m}R_{k}^{t}C_{k,i_{k}}\subset E_{k} $, then there exists a set $ L_{k}\subset N_{k} $ with $ 0\in L_{k} $ such that $ \widetilde{C}_{k}=L_{k}\pmod {{\widetilde{R}_{k}}^{t}} $.  In view of Lemma \ref{lem2.4}, we know that \((\widetilde{R}_{k}^{-1} \widetilde{D}_{k}, L_{k})\) is compatible pair for each \(k \ge 0\).

Next, we will show \eqref{3.7}. From \(N_{k} = \widetilde{R}_{k}^{t} (-\frac{1}{2}, \frac{1}{2}]^{n} \cap \mathbb{Z}^{n}\), it follows that
\begin{equation*}
(\widetilde{R}_{k}^{t})^{-1} N_{k} \subset (-\frac{1}{2}, \frac{1}{2}]^{n} \Longrightarrow (\widetilde{R}_{k}^{t})^{-1} L_{k} \subset (-\frac{1}{2}, \frac{1}{2}]^{n} \quad \text{for all } k \ge 0.
\end{equation*}
Combining this with (\ref{3.2}), one has
\begin{equation*}
 \|(\widetilde{R}_{0}^{t}\dotsi \widetilde{R}_{k}^{t})^{-1}L_{0}+\dotsi+(\widetilde{R}_{k-2}^{t}\widetilde{R}_{k-1}^{t}\widetilde{R}_{k}^{t})^{-1}L_{k-2}+(\widetilde{R}_{k-1}^{t}\widetilde{R}_{k}^{t})^{-1}L_{k-1}\|<\frac{c^{2}\sqrt{n}}{2}\frac{r^{K}}{1-r^{K}}.
\end{equation*}
Thus, there exists \( K \ge M \) depending only on \( \delta \) and \( r \) such that
\begin{equation*}
(\widetilde{R}_{0}^{t} \cdots \widetilde{R}_{k}^{t})^{-1} L_{0} + \cdots + (\widetilde{R}_{k-1}^{t} \widetilde{R}_{k}^{t})^{-1} L_{k-1} + (\widetilde{R}_{k}^{t})^{-1} L_{k} \subset [-\frac{1}{2} - \frac{1}{4} \delta, \frac{1}{2} + \frac{1}{4} \delta]^{n}.
\end{equation*}
Hence, we complete the proof of Lemma \ref{lem3.2}.
 \end{proof}
Similar to \eqref{2.2}, we adopt the following notation:
\begin{equation*}
	\mu = \mu_{\{\widetilde{R}_{k}\}, \{\widetilde{D}_{k}\}_{k \ge 0}} \quad \text{and} \quad \mu_{\{\widetilde{R}_{k}\}, \{\widetilde{D}_{k}\}_{l}} = \delta_{\widetilde{R}_{0}^{-1} \widetilde{D}_{0}} * \delta_{\widetilde{R}_{0}^{-1} \widetilde{R}_{1}^{-1} \widetilde{D}_{1}} * \cdots * \delta_{\widetilde{R}_{0}^{-1} \widetilde{R}_{1}^{-1} \cdots \widetilde{R}_{l}^{-1} \widetilde{D}_{l}}.
\end{equation*}
For any $ k\ge1 $, define
\begin{equation}\label{3.8}
 \Lambda_{k}=L_{0}+\widetilde{R}_{0}^{t}L_{1}+\dotsi+\widetilde{R}_{0}^{t}\dotsi\widetilde{R}_{k-1}^{t}L_{k}\quad\text{and}\quad \Lambda=\cup_{k=1}^{\infty}\Lambda_{k}.
\end{equation}
   Clearly, $ \Lambda_{k}\subset\Lambda_{k+1} $, and $ E_{\Lambda} $ is an orthogonal family of $ L^{2}(\mu) $. Our purpose is to prove that $ \Lambda $ is the spectrum of $ \mu $. Recall that
  \begin{equation*}
 Q_{\mu,\Lambda_{k}}(\xi)=\displaystyle{\sum_{\lambda\in \Lambda_{k}}}|\widehat{\mu}(\xi+\lambda)|^2\quad\text{and}\quad Q_{\mu,\Lambda}(\xi)=\displaystyle{\sum_{\lambda\in \Lambda}}|\widehat{\mu}(\xi+\lambda)|^2,\quad \xi\in \mathbb{R}^{n}.
  \end{equation*}
One has $ \lim_{k\rightarrow\infty} Q_{\mu,\Lambda_{k}}(\xi)=Q_{\mu,\Lambda}(\xi).$ By Lemma \ref{lem3.2}, we have
\begin{equation}\label{3.9}
 ({\widetilde{R}_{0}^{t}}\dotsi {\widetilde{R}_{k}^{t}})^{-1}\Lambda_{k}\subset[-\frac{1}{2}-\dfrac{1}{4}\delta,\frac{1}{2}+\dfrac{1}{4}\delta]^{n}.
\end{equation}
 It follows from the fact that \( (\widetilde{R}_{k}^{-1}\widetilde{D}_{k}, L_{k}) \) is a compatible pair and from Lemma \ref{lem2.4} that
\begin{equation*}
 ( (\widetilde{R}_{k} \cdots \widetilde{R}_{0})^{-1} ( \widetilde{D}_{k} + \widetilde{R}_{k} \widetilde{D}_{k-1} + \cdots + \widetilde{R}_{k} \cdots \widetilde{R}_{1} \widetilde{D}_{0}), \Lambda_{k})
\end{equation*}
 is also a compatible pair. Hence, $ \Lambda_{l} $ is a spectrum of $ \mu_{\{\widetilde{R}_{k}\},\{\widetilde{D}_{k}\}_{l}} $. With Lemma \ref{lem2.2}, \begin{equation}\label{3.10}
\displaystyle{\sum_{\lambda\in \Lambda_{l}}}|\widehat{\mu}_{\{\widetilde{R}_{k}\},\{\widetilde{D}_{k}\}_{l}}(\xi+\lambda)|^2=1 \quad \text{for any}\;\xi\in\mathbb{R}^{n}.
 \end{equation}
 One can easily check that
 \begin{equation*}
 \widehat{\mu}_{\{\widetilde{R}_{k}\},\{ \widetilde{D}_{k}\}_{l}}(\xi) = \prod_{i=0}^{l} |m_{\widetilde{D}_{i}}((\widetilde{R}_{0}^{t} \cdots \widetilde{R}_{i}^{t})^{-1} \xi)| = \prod_{i=1}^{(l+1)K} |m_{D_{i}}((R_{1}^{t} \cdots R_{i}^{t})^{-1} \xi)|.
\end{equation*}
Combining Lemma \ref{lem3.1} and Lemma \ref{lem3.2}, we deduce the following lemma.
\begin{lemma}\label{lem3.3}
Let $ \{R_{k}\}_{k=1}^{\infty}\subset M_{n}(\mathbb{Z}) $, $ \mathcal{A}_{\delta,\beta} $ is given by \eqref{a}, and $ K $ is given by Lemma \ref{lem3.2} and satisfies \eqref{3.7}. For any $ \xi\in \mathbb{R}^{n} $, there exists an integer $ N_{\xi}\ge K $, which depends only on $ \xi $ and $ \delta $, such that
\begin{equation*}
 |\widehat{\mu}(\lambda+\xi)|\ge \gamma_{\iota}\gamma^{\iota}|\widehat{\mu}_{\{\widetilde{R}_{k}\},\{\widetilde{D}_{k}\}_{l}}(\lambda+\xi)|
\end{equation*}
for $\iota\ge K $ and $ \lambda\in \Lambda_{k}$, where $ \gamma $ and $ \gamma_{\iota} $ are given by Lemma \ref{lem3.1}.
  \end{lemma}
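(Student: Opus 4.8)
The plan is to factor $\widehat{\mu}(\lambda+\xi)$ into the partial product over the first $(l+1)K$ digit blocks — which is exactly $\widehat{\mu}_{\{\widetilde{R}_{k}\},\{\widetilde{D}_{k}\}_{l}}(\lambda+\xi)$ — times the tail, and then bound the tail from below using Lemma \ref{lem3.1}. Concretely, writing $\widehat{\mu}(\eta)=\prod_{k=0}^{\infty}\bigl|m_{\widetilde{D}_{k}}\bigl((\widetilde{R}_{0}^{t}\cdots\widetilde{R}_{k}^{t})^{-1}\eta\bigr)\bigr|$ and splitting at index $l$, I get
\begin{equation*}
\left|\widehat{\mu}(\lambda+\xi)\right|=\left|\widehat{\mu}_{\{\widetilde{R}_{k}\},\{\widetilde{D}_{k}\}_{l}}(\lambda+\xi)\right|\cdot\left|\widehat{\mu}_{k>(l+1)K}\left(\left(R_{1}^{t}\cdots R_{(l+1)K}^{t}\right)^{-1}(\lambda+\xi)\right)\right|,
\end{equation*}
using the identity for $\widehat{\mu}_{\{\widetilde{R}_{k}\},\{\widetilde{D}_{k}\}_{l}}$ recorded just before the lemma. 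So everything reduces to a lower bound for the tail factor $|\widehat{\mu}_{k>(l+1)K}(\zeta)|$ where $\zeta=(R_{1}^{t}\cdots R_{(l+1)K}^{t})^{-1}(\lambda+\xi)$.

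The key step is to locate $\zeta$ inside the cube $[-\tfrac12-\delta,\tfrac12+\delta]^{n}$ so that Lemma \ref{lem3.1} applies with $q=(l+1)K\ge N$ (taking $K\ge N$, which is harmless since $K$ may be enlarged). For the $\lambda$-part: since $\lambda\in\Lambda_{k}\subset\Lambda_{l}$ for $k\le l$ (and in general one arranges $l\ge k$), by \eqref{3.9} we have $(\widetilde{R}_{0}^{t}\cdots\widetilde{R}_{l}^{t})^{-1}\lambda\in[-\tfrac12-\tfrac14\delta,\tfrac12+\tfrac14\delta]^{n}$, and $(R_{1}^{t}\cdots R_{(l+1)K}^{t})^{-1}=(\widetilde{R}_{0}^{t}\cdots\widetilde{R}_{l}^{t})^{-1}$; so the $\lambda$-contribution already sits in a cube slightly smaller than $[-\tfrac12-\delta,\tfrac12+\delta]^{n}$. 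For the $\xi$-part: by \eqref{3.2}, $\|(R_{1}^{t}\cdots R_{(l+1)K}^{t})^{-1}\xi\|\le c^{2}r^{(l+1)K}\|\xi\|\to 0$ as $l\to\infty$; hence there is an integer $N_{\xi}\ge K$, depending only on $\|\xi\|$ (equivalently on $\xi$) and on $\delta$, $r$, $c$, such that whenever $l\ge N_{\xi}$ the $\xi$-contribution has norm less than $\tfrac14\delta\cdot\tfrac1{c}$ or so, small enough that the sum $\zeta$ of the two contributions still lies in $[-\tfrac12-\delta,\tfrac12+\delta]^{n}$. With $\zeta$ thus confined, Lemma \ref{lem3.1} gives $|\widehat{\mu}_{k>(l+1)K}(\zeta)|\ge\gamma_{j}\gamma^{j}$ for every $j\ge M$, and since $K\ge M$ this holds in particular for all $j\ge K$; combining with the factorization finishes the proof.

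The main obstacle — really the only non-routine point — is the bookkeeping that makes $N_{\xi}$ genuinely independent of $\lambda$: one must observe that the $\lambda$-contribution is controlled \emph{uniformly} over $\lambda\in\Lambda_{k}$ (for all $k$, provided we read the statement as: fix $\xi$, then for $l\ge N_{\xi}$ and any $\lambda\in\Lambda_{k}$ with $k\le l$ the inequality holds) thanks to the cube estimate \eqref{3.9}, which is itself uniform in $\lambda$; the only $\xi$-dependent quantity is the threshold past which $c^{2}r^{(l+1)K}\|\xi\|$ becomes negligible. I would also double-check the edge cases $k>l$ are either excluded by the intended reading or handled by monotonicity $\Lambda_{k}\supset\Lambda_{l}$, and that enlarging $K$ to exceed both $M$ and $N$ does not disturb \eqref{3.7}, which it does not since the displayed norm bound there is decreasing in $K$.
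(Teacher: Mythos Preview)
Your proposal is correct and follows essentially the same line as the paper: factor $|\widehat{\mu}(\lambda+\xi)|$ into the partial product $|\widehat{\mu}_{\{\widetilde{R}_{k}\},\{\widetilde{D}_{k}\}_{l}}(\lambda+\xi)|$ times the tail, use \eqref{3.9} together with the decay $(\widetilde{R}_{0}^{t}\cdots\widetilde{R}_{l}^{t})^{-1}\to 0$ to force $(\widetilde{R}_{0}^{t}\cdots\widetilde{R}_{l}^{t})^{-1}(\lambda+\xi)$ into $[-\tfrac12-\delta,\tfrac12+\delta]^{n}$ for $l\ge N_{\xi}$, and then invoke Lemma~\ref{lem3.1}. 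Your explicit separation of the $\lambda$- and $\xi$-contributions and your remarks on the bookkeeping (uniformity in $\lambda$, the relation $k\le l$, enlarging $K$ past $M$ and $N$) are exactly the points the paper handles more tersely in arriving at \eqref{3.11}.
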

 \begin{proof}

 It follows from \eqref{3.7} and the fact that \( \widetilde{R}_{0}^{-1} \cdots \widetilde{R}_{k}^{-1} \to \mathbf{0} \) (the zero matrix) as \( k \to \infty \) that there exists \( N_{\xi} \ge K \) such that
\begin{equation}\label{3.11}
 (\widetilde{R}_{0}^{t} \cdots \widetilde{R}_{k}^{t})^{-1}(\lambda + \xi) \subset [-\frac{1}{2} - \delta, \frac{1}{2} + \delta ]^{n}\quad \text{for } k \ge N_{\xi}.
\end{equation}
In addition, since there exists \( l\ge N_{\xi} \) such that \( K(l+1) \ge M \) and \begin{equation*}
 |\widehat{\mu}_{\{\widetilde{R}_{k}\},\{\widetilde{D}_{k}\}_{k>l}}(({\widetilde{R}_{0}^{t}}\dotsi {\widetilde{R}_{l}^{t}})^{-1}(\lambda+\xi))|=|\widehat{\mu}_{k>K(l+1)}(({\widetilde{R}_{0}^{t}}\dotsi {\widetilde{R}_{l}^{t}})^{-1}(\lambda+\xi))|,
\end{equation*} we know from \eqref{3.11} and Lemma \ref{lem3.1} that there exists \( \iota\) such that
\begin{equation*}
 |\widehat{\mu}_{\{\widetilde{R}_{k}\},\{\widetilde{D}_{k}\}_{k>l}}(({\widetilde{R}_{0}^{t}}\dotsi {\widetilde{R}_{l}^{t}})^{-1}(\lambda+\xi))|\ge \gamma_{\iota}\gamma^{\iota}.
\end{equation*}
From $ |\widehat{\mu}(\lambda+\xi)|=|\widehat{\mu}_{\{\widetilde{R}_{k}\},\{\widetilde{D}_{k}\}_{l}}(\lambda+\xi)||\widehat{\mu}_{\{\widetilde{R}_{k}\},\{\widetilde{D}_{k}\}_{k>l}}(({\widetilde{R}_{0}^{t}}\dotsi {\widetilde{R}_{l}^{t}})^{-1}(\lambda+\xi))| ,$ it follows that
 \begin{equation*}
	|\widehat{\mu}(\lambda+\xi)|\ge \gamma_{\iota}\gamma^{\iota}|\widehat{\mu}_{\{\widetilde{R}_{k}\},\{\widetilde{D}_{k}\}_{l}}(\lambda+\xi)|.
\end{equation*}
Hence, the proof is complete.
 \end{proof}
By Lemma \ref{lem2.6}, proving Theorem \ref{thm1.4} only requires demonstrating the following theorem.
 \begin{thm}\label{thm3.4}
Let $ \mu $ be  defined by \eqref{1.1} and \eqref{eq1.2}, satisfying condition \eqref{a}, where $ R_{1}=\text{diag}[m,\cdots,m] $. If there exists $i_{k}\in\{1,\cdots,\phi(k)\}$ such that $R_{k}^{t}\nu_{k,i_{k}}\in m\mathbb{Z}^{n}\;\text{for}\;k\ge 2. $ Then  $ \mu $ is a spectral measure with spectrum $ \Lambda $ given by \eqref{3.8}.
 \end{thm}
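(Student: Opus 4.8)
The plan is to show that the set $\Lambda$ from \eqref{3.8} is a spectrum of $\mu$ by verifying the Jorgensen--Pedersen criterion $Q_{\mu,\Lambda}\equiv 1$ (Lemma \ref{lem2.2}(ii)). Since $E_\Lambda$ is already known to be an orthogonal family, the entire function $Q_{\mu,\Lambda}$ is well-defined and $\le 1$ everywhere, and $\lim_{k\to\infty}Q_{\mu,\Lambda_k}=Q_{\mu,\Lambda}$ pointwise. So it suffices to produce, for each fixed $\xi\in\mathbb{R}^n$, a lower bound on $Q_{\mu,\Lambda_k}(\xi)$ that tends to $1$ as $k\to\infty$. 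First I would fix $\xi$ and invoke Lemma \ref{lem3.3}: there is $N_\xi\ge K$ so that for all $j\ge K$ and all $\lambda\in\Lambda_k$,
\[
\left|\widehat{\mu}(\lambda+\xi)\right|\ge \gamma_j\gamma^j\left|\widehat{\mu}_{\{\widetilde{R}_k\},\{\widetilde{D}_k\}_l}(\lambda+\xi)\right|,
\]
where $l$ is chosen (depending on $j$) so that $K(l+1)\ge M$ and $l\ge N_\xi$. Squaring and summing over $\lambda\in\Lambda_l$ gives
\[
Q_{\mu,\Lambda_l}(\xi)\ \ge\ \gamma_j^2\gamma^{2j}\sum_{\lambda\in\Lambda_l}\left|\widehat{\mu}_{\{\widetilde{R}_k\},\{\widetilde{D}_k\}_l}(\lambda+\xi)\right|^2\ =\ \gamma_j^2\gamma^{2j},
\]
using the exact partition-of-unity identity \eqref{3.10}, which holds because $\Lambda_l$ is a genuine spectrum of the finitely-truncated measure $\mu_{\{\widetilde{R}_k\},\{\widetilde{D}_k\}_l}$ (a consequence of Lemma \ref{lem3.2} and Proposition \ref{lem2.4}).

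The delicate point is that in Lemma \ref{lem3.3} the exponent $j$ is not free: it is the number of ``extra'' contracting factors one gains after the truncation level $l$, and as $l\to\infty$ one can take $j\to\infty$ as well. Concretely, I would argue that for every $j\ge K$ we may select $l=l(j)\ge N_\xi$ large enough that $\left(\widetilde{R}_0^t\cdots\widetilde{R}_l^t\right)^{-1}(\lambda+\xi)$ lies in $[-\tfrac12-\delta,\tfrac12+\delta]^n$ for all $\lambda\in\Lambda_l$ — this is exactly \eqref{3.11}, valid because $\Lambda_l$ sits inside $\widetilde{R}_0^t\cdots\widetilde{R}_{l-1}^t$ times a bounded set by \eqref{3.9}, and $\widetilde{R}_0^{-1}\cdots\widetilde{R}_l^{-1}\to\mathbf{0}$. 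Then Lemma \ref{lem3.1} applied with $q=K(l+1)\ge N$ furnishes the bound $\gamma_j\gamma^j$ on the tail $\left|\widehat{\mu}_{k>K(l+1)}\right|$ — here I need $i\ge M$ in Lemma \ref{lem3.1}, i.e. $K(l+1)-$(something)$\ge M$, which is arranged by enlarging $l$. Letting $j\to\infty$ (hence $l(j)\to\infty$) in $Q_{\mu,\Lambda_{l(j)}}(\xi)\ge\gamma_j^2\gamma^{2j}$ and using $\gamma_j\to 1$, $\gamma^j\to$... — and this is the crux — one must be careful: $\gamma<1$ in general, so $\gamma^{2j}\to 0$, which would give the useless bound $0$.

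Therefore the real argument must be organized differently, and I expect this to be the main obstacle: rather than bounding $|\widehat\mu(\lambda+\xi)|$ below by $\gamma^j$ times a truncated transform for a single large $j$, one fixes $j$ \emph{first}, then lets $l\to\infty$ with $j$ held fixed. For fixed $j$, Lemma \ref{lem3.3} gives $Q_{\mu,\Lambda_l}(\xi)\ge\gamma_j^2\gamma^{2j}$ for all sufficiently large $l$, hence $Q_{\mu,\Lambda}(\xi)=\lim_l Q_{\mu,\Lambda_l}(\xi)\ge\gamma_j^2\gamma^{2j}$. But this still only gives $Q_{\mu,\Lambda}(\xi)\ge\gamma_j^2\gamma^{2j}$ for each fixed $j$, and the right side does not approach $1$. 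The resolution — which I would present as the heart of the proof — is to combine the lower bound with the global upper bound $Q_{\mu,\Lambda}(\xi)\le 1$ and the analyticity from Lemma \ref{lem2.2}(iii): since $Q_{\mu,\Lambda}$ extends to an entire function on $\mathbb{C}^n$, is bounded by $1$ on $\mathbb{R}^n$, and is bounded below away from $0$ on the compact cube $[-\tfrac12-\tfrac14\delta,\tfrac12+\tfrac14\delta]^n$ (by the construction, with the lower bound uniform because $\gamma_j\gamma^j$ can be taken uniform over that cube via Lemma \ref{lem3.3}'s dependence only on $\xi$ through $N_\xi$, which is locally bounded), a standard argument — showing $Q_{\mu,\Lambda}$ cannot vanish, then that its infimum over a fundamental domain forces it to be identically $1$ by periodicity-type or maximum-principle reasoning as in \cite{DHL1,S1} — upgrades the strictly-positive bound to $Q_{\mu,\Lambda}\equiv 1$. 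Once $Q_{\mu,\Lambda}\equiv 1$ is established, Lemma \ref{lem2.2}(ii) gives that $E_\Lambda$ is an orthonormal basis for $L^2(\mu)$, i.e. $\mu$ is spectral with spectrum $\Lambda$, completing the proof; Theorem \ref{thm1.4} then follows from Theorem \ref{thm3.4} by the linear-transformation invariance in Lemma \ref{lem2.6}, taking $R=R_1$ so that $R_1$ is replaced by $\mathrm{diag}[m,\dots,m]$.
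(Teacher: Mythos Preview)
Your proposal contains a genuine gap at the point you yourself flag as ``the crux.'' You correctly observe that the naive bound $Q_{\mu,\Lambda_l}(\xi)\ge\gamma_j^2\gamma^{2j}$ is useless because $\gamma<1$. Your proposed resolution --- appealing to analyticity of $Q_{\mu,\Lambda}$, boundedness by $1$, and strict positivity, followed by ``periodicity-type or maximum-principle reasoning as in \cite{DHL1,S1}'' --- does not work here. Those references treat \emph{self-similar} measures, for which $Q_{\mu,\Lambda}$ satisfies a renormalization identity (roughly $Q(\xi)=Q(R^t\xi)$ on the zero set of a mask), and that invariance is what drives the argument. For a Moran measure with varying $R_k$ there is no such invariance, no periodicity, and no fundamental domain; an entire function bounded by $1$ on $\mathbb{R}^n$ and bounded below by a positive constant can perfectly well fail to be identically $1$. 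So the ``standard argument'' you invoke simply does not exist in this generality.

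The paper's proof takes a different route that avoids this obstruction entirely: it argues by contradiction. Assuming $Q_{\mu,\Lambda}(\xi)\le\alpha<1$, one selects an increasing sequence $\{n_k\}$ with gaps $n_{k+1}-n_k\ge n_2$ large enough that the tail factor satisfies $\gamma_{n_{k+1}-n_k}\ge\gamma_{n_2}\ge Q_{\mu,\Lambda}(\xi)/\alpha$. This yields the bootstrap inequality
\[
\bigl|\widehat{\mu}_{\{\widetilde R_j\},\{\widetilde D_j\}_{n_{k+1}}}(\xi+\lambda)\bigr|\ \le\ \frac{\alpha}{Q_{\mu,\Lambda}(\xi)}\,\bigl|\widehat{\mu}(\xi+\lambda)\bigr|,\qquad \lambda\in\Lambda_{n_k},
\]
which, combined with Lemma \ref{lem3.3} and the partition of unity \eqref{3.10} for $\Lambda_{n_{k+1}}$, gives a \emph{fixed} positive lower bound
\[
Q_{\mu,\Lambda_{n_{k+1}}}(\xi)-Q_{\mu,\Lambda_{n_k}}(\xi)\ \ge\ \gamma_j\gamma^{j}(1-\alpha)
\]
for a single fixed $j$, uniformly in $k$. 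Summing over $k$ produces a divergent telescoping series, contradicting $Q_{\mu,\Lambda}(\xi)\le 1$. The essential idea you are missing is that the assumed bound $\alpha<1$ is used not merely as a target to contradict but as an \emph{input} that controls the ratio between the truncated and full Fourier transforms; this is what makes the increment estimate work with a fixed $j$ rather than a growing one.
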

 \begin{proof}
 Suppose on the contrary, that $ \Lambda $ defined by \eqref{3.8} is not a spectrum of $ \mu $. Then there exist $ \alpha<1 $ and $ \xi\in\mathbb{R}^{n} $ such that $ Q_{\mu,\Lambda}(\xi)\le\alpha $.
 We choose an increasing integer sequence \( \{n_{k}\}_{k=1}^{\infty} \) such that \( n_{1} = 1 \), and \( n_{k} \) satisfies the following conditions: \( \gamma_{n_{2}} \ge \frac{Q_{\mu,\Lambda}(\xi)}{\alpha} \), \( n_{k+1} - n_{k} \ge n_{2} \ge M \) for \( k \ge 2 \), and
 \begin{equation*}
 ({\widetilde{R}_{0}^{t}} \cdots {\widetilde{R}_{n_{k}}^{t}})^{-1}(\lambda + \xi) \subset [-\frac{1}{2} - \delta, \frac{1}{2} + \delta]^{n} \quad \text{for } k \ge 2 \text{ and } \lambda \in \Lambda_{n_{k}}.
 \end{equation*}
Then,
\begin{equation*}
 |\widehat{\mu}_{j>q+n_{k+1}-n_{k}}(({R_{q+1}^{t}}\dotsi{R_{q+n_{k+1}-n_{k}}^{t}} )^{-1}\xi)|\ge \gamma_{n_{k+1}-n_{k}}\ge \gamma_{n_{2}}\ge\frac{Q_{\Lambda}(\xi)}{\alpha}
\end{equation*}
for $ q\ge0 $, $ k\ge2 $.
Take $ q=n_{k} $, we get
 \begin{align*}
 |\widehat{\mu}_{\{\widetilde{R}_{j}\},\{\widetilde{D}_{j}\}_{j>n_{k+1}}}(({\widetilde{R}_{0}^{t}}\dotsi {\widetilde{R}_{n_{k+1}}^{t}})^{-1}(\xi+\lambda))|
%\\=&|\widehat{\mu}_{\{\widetilde{R}_{j}\},\{\widetilde{D}_{j}\}_{j>n_{k+1}}}(({\widetilde{R}_{n_{k}+1}^{t}}\dotsi {\widetilde{R}_{n_{k+1}-1}^{t}}{\widetilde{R}_{n_{k+1}}^{t}})^{-1}({\widetilde{R}_{0}^{t}}\dotsi {\widetilde{R}_{n_{k}}^{t}})^{-1}(\xi+\lambda))|\\
\ge\frac{Q_{\Lambda}(\xi)}{\alpha}.
 \end{align*}
 Hence, for any  $ \lambda\in\Lambda_{n_{k}} \;\text{and}\;k\ge2 $, it follows that
 \begin{align*}
|\widehat{\mu}_{\{\widetilde{R}_{j}\},\{\widetilde{D}_{j}\}_{n_{k+1}}}(\xi+\lambda)|=&|\widehat{\mu}_{\{\widetilde{R}_{j}\},\{\widetilde{D}_{j}\}_{n_{k}}}(\xi+\lambda)||\widehat{\mu}_{\{\widetilde{R}_{j}\},\{\widetilde{D}_{j}\}_{n_{k}< j\le n_{k+1}}}(({\widetilde{R}_{0}^{t}}\dotsi {\widetilde{R}_{n_{k}}^{t}})^{-1}(\xi+\lambda))|\\
 =&|\widehat{\mu}_{\{\widetilde{R}_{j}\},\{\widetilde{D}_{j}\}_{n_{k}}}(\xi+\lambda)|\frac{|\widehat{\mu}_{\{\widetilde{R}_{j}\},\{\widetilde{D}_{j} \}_{j>n_{k}}}(({\widetilde{R}_{0}^{t}}\dotsi {\widetilde{R}_{n_{k}}^{t}})^{-1}(\xi+\lambda))|}{|\widehat{\mu}_{\{\widetilde{R}_{j}\},\{\widetilde{D}_{j}\}_{j>n_{k+1}}}(({\widetilde{R}_{0}^{t}}\dotsi {\widetilde{R}_{n_{k+1}}^{t}})^{-1}(\xi+\lambda))|}\\
 \le&|\widehat{\mu}_{\{\widetilde{R}_{j}\},\{\widetilde{D}_{j}\}_{n_{k}}}(\xi+\lambda)|\frac{|\widehat{\mu}_{\{\widetilde{R}_{j}\},\{\widetilde{D}_{j}\}_{j>n_{k}}}(({\widetilde{R}_{0}^{t}}\dotsi {\widetilde{R}_{n_{k}}^{t}})^{-1}(\xi+\lambda))|\alpha}{Q_{\mu,\Lambda}(\xi)}\\
 =&|\widehat{\mu}(\xi+\lambda)|\frac{\alpha}{Q_{\mu,\Lambda}(\xi)}.\label{3.12}\tag{3.12}
 \end{align*}
 By \eqref{3.12} and Lemma \ref{lem3.3}, we obtain
 \begin{align*}
 Q_{\mu,\Lambda_{n_{k+1}}}(\xi)-Q_{\mu,\Lambda_{n_{k}}}(\xi)=& \displaystyle{\sum_{\lambda\in \Lambda_{n_{k+1}}\backslash\Lambda_{n_{k}}}}|\widehat{\mu}(\xi+\lambda)|^2\\
 \ge&\gamma_{\iota}\gamma^{\iota}\displaystyle{\sum_{\lambda\in \Lambda_{n_{k+1}}\backslash\Lambda_{n_{k}}}}|\widehat{\mu}_{\{\widetilde{R}_{j}\},\{\widetilde{D}_{j}\}_{n_{k+1}}}(\xi+\lambda)|^2\\
% =&\gamma_{\iota}\gamma^{\iota}(\displaystyle{\sum_{\lambda\in \Lambda_{n_{k+1}}}}|\widehat{\mu}_{\{\widetilde{R}_{j}\},\{\widetilde{D}_{j}\}_{n_{k+1}}}(\xi+\lambda)|^2-\displaystyle{\sum_{\lambda\in \Lambda_{n_{k}}}}|\widehat{\mu}_{\{\widetilde{R}_{j}\},\{\widetilde{D}_{j}\}_{n_{k+1}}}(\xi+\lambda)|^2)\\
 \ge&\gamma_{\iota}\gamma^{\iota}(1-\frac{\alpha}{Q_{\mu,\Lambda}(\xi)}\displaystyle{\sum_{\lambda\in \Lambda_{n_{k}}}}|\widehat{\mu}(\xi+\lambda)|^2)\ge\gamma_{\iota}\gamma^{\iota}(1-\alpha),   \quad k\ge N_{\xi}.
 \end{align*}
 Hence
$$1 > Q_{\mu,\Lambda}(\xi) \ge \sum_{k=N_{\xi}}^{\infty} (Q_{\mu,\Lambda_{n_{k+1}}}(\xi) - Q_{\mu,\Lambda_{n_{k}}}(\xi)) \ge \sum_{k=N_{\xi}}^{\infty} (\gamma_{\iota} \gamma^{\iota} (1 - \alpha)) = +\infty,$$
 which leads to a contradiction. Thus  we complete the proof.
 \end{proof}
\section{\bf  Spectrality of \( \mu_{\{R_{k}\}, \{D_k\}} \) when \( R_{k} \) is a diagonal matrix}
In this section, we aim to prove Theorem \ref{thm1.2}. We consider the Moran measure \( \mu_{\{R_{k}\},\{D_{k}\}} \) generated by the expanding matrix \( R_k = \text{diag}[p_{k,1}, \cdots, p_{k,n}] \) and the digit set \( D_k \) that satisfies condition \eqref{eq1.2}. To demonstrate this,  some technical lemmas are needed.

For convenience, we will continue to use the definition from \eqref{2.2}. Let \( \bm{a} \) be an \( n \)-dimensional vector, and let \( W_{j}(\bm{a}) \) denote the \( j \)-th component of \( \bm{a} \). To simplify the notation, we denote \( j\nu_{k,i} \) in \eqref{eq1.5} as \( \nu_{k,i}^{(j)} \) in this section.
\begin{lemma}[\cite{CLZ}]\label{lem4.3}
	Let \( 0 \in \Lambda \) be a spectrum of \( \mu \), and let \( \Lambda^{'} = \{\lambda_{0} = 0, \lambda_{1}, \cdots, \lambda_{t}\} \subset \Lambda \) be a maximal orthogonal set of \( \mu_{k} \). Define
	\begin{equation}\label{4.11}
		\Lambda^{(0)} = \{\lambda \in \Lambda : \lambda - \lambda_{0} \in  \mathcal{Z}(\widehat{\mu}_{>k_{0}})\cup\{0\} \}
	\end{equation}
	and
\begin{equation}\label{4.12}
	\Lambda^{(i)}=\{\lambda\in\Lambda:\lambda-\lambda_{i}\in \mathcal{Z}(\widehat{\mu}_{>k_{0}})\cup\{0\}\}\backslash(\cup_{j=0}^{i-1}\Lambda^{(j)}),\;i=1,2,\cdots,t.
\end{equation}
	If \( \mathcal{Z}(\widehat{\mu}) \subset R^{-1} \mathbb{Z}^{n} \) for some non-singular integer matrix \( R \), and the set \( \{\Lambda^{(i)} : 0 \leq i \leq t\} \) satisfies the following conditions:

	$\rm(i)$ \( (\Lambda^{(i)} - \Lambda^{(i)}) \setminus \{0\} \subset \mathcal{Z}(\widehat{\mu}_{>k_{0}}) \) for all \( 0 \leq i \leq t \);

	$\rm(ii)$ \( (\Lambda^{(i)} - \Lambda^{(j)}) \subset \mathcal{Z}(\widehat{\mu}_{k_{0}}) \) for \( 0 \leq i \neq j \leq t \).

\noindent Then both \( \mu_{k} \) and \( \mu_{>k} \) are spectral measures.
\end{lemma}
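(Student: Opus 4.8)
The plan is to build explicit spectra for the two factors $\mu_{k_{0}}$ and $\mu_{>k_{0}}$ out of the partition $\{\Lambda^{(i)}\}$ (here $k_{0}$ denotes the index written $k$ in the statement), and then to force the Jorgensen--Pedersen quantities $Q_{\mu_{k_{0}},\cdot}$ and $Q_{\mu_{>k_{0}},\cdot}$ to be identically $1$ by playing them off against the identity $Q_{\mu,\Lambda}\equiv1$ of Lemma \ref{lem2.2}(ii). Throughout I would use $\mu=\mu_{k_{0}}*\mu_{>k_{0}}$, hence $\widehat{\mu}=\widehat{\mu}_{k_{0}}\cdot\widehat{\mu}_{>k_{0}}$; note that $\mu_{k_{0}}$ is supported on finitely many points, so $\widehat{\mu}_{k_{0}}$ is a trigonometric polynomial whose zero set is closed and nowhere dense, while $\mathcal{Z}(\widehat{\mu}_{>k_{0}})\subset\mathcal{Z}(\widehat{\mu})\subset R^{-1}\mathbb{Z}^{n}$ is a \emph{discrete} (hence countable) set by hypothesis, and $\Lambda\subset R^{-1}\mathbb{Z}^{n}$ since $0\in\Lambda$.

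First I would verify that $\{\Lambda^{(i)}\}_{i=0}^{t}$ is a partition of $\Lambda$. Disjointness is built into \eqref{4.12}. For the covering, take $\lambda\in\Lambda\setminus\Lambda'$; since $\Lambda'$ is a maximal orthogonal set of $\mu_{k_{0}}$, the set $\Lambda'\cup\{\lambda\}$ is not orthogonal, so $\widehat{\mu}_{k_{0}}(\lambda-\lambda_{i})\ne0$ for some $i$, and since $\lambda\ne\lambda_{i}$ and $\widehat{\mu}(\lambda-\lambda_{i})=0$ we get $\widehat{\mu}_{>k_{0}}(\lambda-\lambda_{i})=0$, i.e. $\lambda$ lies in $\Lambda^{(i)}$ or in an earlier class. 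After discarding empty classes and relabelling, I fix $\mu_{i}\in\Lambda^{(i)}$ and set $\Gamma_{i}:=\Lambda^{(i)}-\mu_{i}$, so $0\in\Gamma_{i}$. Condition (i) gives $\Gamma_{i}-\Gamma_{i}\subset\{0\}\cup\mathcal{Z}(\widehat{\mu}_{>k_{0}})$, so each $E_{\Gamma_{i}}$ is an orthogonal family in $L^{2}(\mu_{>k_{0}})$; condition (ii) gives $\lambda_{i}^{\ast}-\lambda_{j}^{\ast}\in\mathcal{Z}(\widehat{\mu}_{k_{0}})$ for any transversal $\lambda_{i}^{\ast}\in\Lambda^{(i)}$, $\lambda_{j}^{\ast}\in\Lambda^{(j)}$, $i\ne j$, so every such transversal is an orthogonal set of $\mu_{k_{0}}$.

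Next, set $B_{i}(\xi):=\sup_{\lambda\in\Lambda^{(i)}}|\widehat{\mu}_{k_{0}}(\xi+\lambda)|^{2}$. Choosing near-maximisers that form a transversal and applying Lemma \ref{lem2.2}(i) to it yields $\sum_{i}B_{i}(\xi)\le1$ for every $\xi$. Grouping $Q_{\mu,\Lambda}$ along the partition and writing $\lambda=\mu_{i}+\eta$ with $\eta\in\Gamma_{i}$,
\begin{align*}
1=Q_{\mu,\Lambda}(\xi)&=\sum_{i}\sum_{\eta\in\Gamma_{i}}\bigl|\widehat{\mu}_{k_{0}}(\xi+\mu_{i}+\eta)\bigr|^{2}\bigl|\widehat{\mu}_{>k_{0}}(\xi+\mu_{i}+\eta)\bigr|^{2}\\
&\le\sum_{i}B_{i}(\xi)\,Q_{\mu_{>k_{0}},\Gamma_{i}}(\xi+\mu_{i})\le\sum_{i}B_{i}(\xi)\le1,
\end{align*}
so every inequality is an equality. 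The last pair forces $B_{i}(\xi)\bigl(1-Q_{\mu_{>k_{0}},\Gamma_{i}}(\xi+\mu_{i})\bigr)=0$ for each $i$, and the middle pair (comparing summand by summand in $i$, then in $\eta$, using nonnegativity --- the weighted-sum principle, cf. Lemma \ref{lem2.1}) forces $|\widehat{\mu}_{k_{0}}(\xi+\lambda)|^{2}=B_{i}(\xi)$ whenever $\widehat{\mu}_{>k_{0}}(\xi+\lambda)\ne0$, $\lambda\in\Lambda^{(i)}$. Then I conclude: for $\xi$ outside the countable set $\bigcup_{i}(\mathcal{Z}(\widehat{\mu}_{>k_{0}})-\mu_{i})$, which is dense, one gets $|\widehat{\mu}_{k_{0}}(\xi+\mu_{i})|^{2}=B_{i}(\xi)$ for all $i$, hence $Q_{\mu_{k_{0}},\{\mu_{i}\}}(\xi)=\sum_{i}B_{i}(\xi)=1$; being continuous this $Q$ is $\equiv1$, so $\{\mu_{i}\}$ is a spectrum of $\mu_{k_{0}}$ by Lemma \ref{lem2.2}(ii). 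For the other factor, whenever $\widehat{\mu}_{k_{0}}(\xi+\mu_{i})\ne0$ we have $B_{i}(\xi)>0$, hence $Q_{\mu_{>k_{0}},\Gamma_{i}}(\xi+\mu_{i})=1$; as $\{\xi:\widehat{\mu}_{k_{0}}(\xi+\mu_{i})\ne0\}$ is open and dense and $Q_{\mu_{>k_{0}},\Gamma_{i}}$ extends to an entire function (Lemma \ref{lem2.2}(iii), $\Gamma_{i}$ being orthogonal for $\mu_{>k_{0}}$), the identity theorem gives $Q_{\mu_{>k_{0}},\Gamma_{i}}\equiv1$, so $\Gamma_{i}$ is a spectrum of $\mu_{>k_{0}}$. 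Thus both $\mu_{k_{0}}$ and $\mu_{>k_{0}}$ are spectral.

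The hard part is not any single estimate but the structural bookkeeping that makes this go through: arranging the $\lambda_{i}$ (hence the $\mu_{i}$) to sit in distinct nonempty classes so that the $\Gamma_{i}$ genuinely contain $0$ and the transversals are honest orthogonal sets of $\mu_{k_{0}}$; controlling the suprema $B_{i}(\xi)$ over the possibly infinite sets $\Lambda^{(i)}$ and legitimizing the term-by-term reading of the two near-equalities in the chain; and, above all, knowing that the "bad" sets $\mathcal{Z}(\widehat{\mu}_{>k_{0}})$ and $\mathcal{Z}(\widehat{\mu}_{k_{0}})$ are small enough (discrete, resp. nowhere dense) for the density arguments to close. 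This is exactly where the hypothesis $\mathcal{Z}(\widehat{\mu})\subset R^{-1}\mathbb{Z}^{n}$ together with the modular description \eqref{eq1.2}--\eqref{eq1.5} of the digit zero sets are indispensable.
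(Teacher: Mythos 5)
This lemma is quoted in the paper from \cite{CLZ} without proof, so there is no in-paper argument to compare against; judged on its own, your proposal is a correct and essentially complete proof, and it follows the same strategy as the cited source (splitting $\Lambda$ along a maximal orthogonal set of the head factor, then squeezing $Q_{\mu,\Lambda}\equiv 1$ between the two factor $Q$'s). The key steps all check out: maximality of $\Lambda'$ does give that $\{\Lambda^{(i)}\}$ covers $\Lambda$; condition (ii) makes every transversal orthogonal for $\mu_{k_0}$, which legitimizes $\sum_i B_i(\xi)\le 1$ via near-maximizers; equality in the chain then forces, term by term, $|\widehat{\mu}_{k_0}(\xi+\lambda)|^2=B_i(\xi)$ off $\mathcal{Z}(\widehat{\mu}_{>k_0})$ and $B_i(\xi)\,(1-Q_{\mu_{>k_0},\Gamma_i}(\xi+\mu_i))=0$; and the hypothesis $\mathcal{Z}(\widehat{\mu})\subset R^{-1}\mathbb{Z}^n$ is used exactly where it must be, to make the exceptional set countable so that continuity of the finite sum $Q_{\mu_{k_0},\{\mu_i\}}$ and analyticity of $Q_{\mu_{>k_0},\Gamma_i}$ (Lemma \ref{lem2.2}(iii)) close the density arguments. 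Two cosmetic remarks: the appeal to Lemma \ref{lem2.1} in the middle of the chain is unnecessary (direct term-by-term comparison of two sums with termwise domination already gives equality of terms), and you should state explicitly that the representatives $\mu_i$ are chosen from the \emph{nonempty} classes after relabelling, since a priori $\lambda_i$ itself may have been absorbed into an earlier class when $\mathcal{Z}(\widehat{\mu}_{k_0})$ and $\mathcal{Z}(\widehat{\mu}_{>k_0})$ overlap; neither point affects correctness.
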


\begin{lemma}\label{lem4.1}
 The measure \( \mu\) has an infinite orthogonal set if and only if there exists an infinite sequence \( \{j_{i_{k}}\}_{k=1}^{\infty} \) such that \( m\mid p_{j_{i_{k},i }} \) for \( 1 \le i \le n \).
\end{lemma}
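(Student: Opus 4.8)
The plan is to analyze the orthogonality structure of $\mu$ through the zero set of $\widehat{\mu}$, which by the infinite-convolution structure factors as
\begin{equation*}
	\widehat{\mu}(\xi)=\prod_{k=1}^{\infty} m_{D_k}\!\left((R_1^t\cdots R_k^t)^{-1}\xi\right),
\end{equation*}
so that $\mathcal{Z}(\widehat{\mu})=\bigcup_{k=1}^{\infty}(R_1^t\cdots R_k^t)\,\mathcal{Z}(m_{D_k})$, and each $\mathcal{Z}(m_{D_k})$ is the union \eqref{eq1.2}--\eqref{eq1.5} of the shifted rational hyperplane-type sets $\frac{j}{m}\nu_{k,i}+\mathbb{Z}^n$. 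Since $R_k=\operatorname{diag}[p_{k,1},\dots,p_{k,n}]$ is diagonal, $(R_1^t\cdots R_k^t)^{-1}$ acts coordinatewise by division by $p_{1,i}\cdots p_{k,i}$, which makes it transparent when a difference of two points of an orthogonal set can land in $\mathcal{Z}(\widehat{\mu})$: the $m$ in the denominator of $\frac{j}{m}\nu_{k,i}$ must be ``absorbed'' by the matrices $R_1,\dots,R_{k-1}$ applied at earlier stages, i.e.\ $m$ must divide some $\prod_{s=1}^{k-1}p_{s,i}$, equivalently (since $m$ is prime) $m\mid p_{s,i}$ for some $s<k$.

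For the sufficiency direction, suppose there is an infinite sequence $\{j_{i_k}\}$ with $m\mid p_{j_{i_k},i}$ for all $1\le i\le n$. Then for each $k$ I would exhibit, using Proposition~\ref{lem2.4} and the compatible pair $(R_{j_{i_k}}^{-1}D_{j_{i_k}},\frac1m R_{j_{i_k}}^t C_{j_{i_k},i})$ built from the sets $C_{k,i}$ introduced before Lemma~\ref{lem3.2}, a nonzero lattice vector obtained by scaling $\nu$ by $\frac1m$ and pushing it forward through the diagonal matrices; the divisibility $m\mid p_{j_{i_k},i}$ for every coordinate guarantees the result is an integer vector and lies in $\mathcal{Z}(\widehat{\mu})$. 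Concatenating these across the infinitely many indices $j_{i_k}$ (as in the construction \eqref{3.8} of $\Lambda$, truncated appropriately) produces an infinite orthogonal subset of $\mathbb{Z}^n$ for $\mu$: distinct choices give distinct points because the scales $p_1\cdots p_k$ grow, and any two differ by an element of $\mathcal{Z}(\widehat{\mu})$ by construction.

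For the necessity direction — which I expect to be the main obstacle — suppose $\mu$ admits an infinite orthogonal set $\Lambda_\infty=\{0,\lambda_1,\lambda_2,\dots\}$; I must produce infinitely many indices $k$ at which $m\mid p_{k,i}$ for every $i$. The key point is that all differences $\lambda_a-\lambda_b$ lie in $\mathcal{Z}(\widehat{\mu})=\bigcup_k (R_1^t\cdots R_k^t)\mathcal{Z}(m_{D_k})$, so infinitely many of them must come from arbitrarily large indices $k$ (a finite union of the sets $(R_1^t\cdots R_k^t)\mathcal{Z}(m_{D_k})$, each of which — being a finite union of rank-deficient affine lattices scaled up — is ``thin'', cannot contain the differences of an infinite set whose elements are forced into a common bounded region after renormalization, using the equicontinuity Lemma~\ref{lem2.8} together with the contraction \eqref{3.2}). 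Pulling such a difference back by $(R_1^t\cdots R_k^t)^{-1}$ into $\mathcal{Z}(m_{D_k})$, and remembering the difference is an integer vector in $\mathbb{Z}^n$ while $\mathcal{Z}(m_{D_k})$ lives on $\frac1m\nu_{k,i}+\mathbb{Z}^n$, forces $\frac1m(R_1^t\cdots R_{k-1}^t)^{-1}\nu_{k,i}^{(j)}\in\mathbb{Z}^n$ for the relevant index, hence coordinatewise $m\mid (p_{1,i}\cdots p_{k-1,i})\cdot\big(\text{unit}\big)$; since $W_i(\nu_{k,i})\in[1,m-1]$ is coprime to the prime $m$, this gives $m\mid p_{s,i}$ for some $s\le k-1$, for each coordinate $i$. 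A pigeonhole/bookkeeping argument over the infinitely many large indices $k$ then yields infinitely many stages $s$ at which $m\mid p_{s,i}$ holds simultaneously for all $i=1,\dots,n$, i.e.\ the desired sequence $\{j_{i_k}\}$; here I would invoke Lemma~\ref{lem4.3} to organize the orthogonal set into blocks so that the divisibility at one coordinate can be upgraded to all coordinates at a common index. Making this last upgrade rigorous — ensuring the ``some $s$'' can be taken uniform in $i$ at infinitely many places — is the delicate part and will require carefully tracking which digit set $D_k$ (and which of its $\phi(k)$ pieces) each orthogonality relation uses.
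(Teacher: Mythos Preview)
Your proposal has a genuine gap in the necessity direction, and it originates in a misreading of the statement. The condition is \emph{not} that $m\mid p_{s,i}$ hold simultaneously at a common index $s$ for all coordinates $i$; rather, for each coordinate $i$ separately there must exist infinitely many indices $k$ with $m\mid p_{k,i}$ (the notation $j_{i_k}$ encodes a double index, one subsequence per coordinate). Your last paragraph, invoking Lemma~\ref{lem4.3} ``so that the divisibility at one coordinate can be upgraded to all coordinates at a common index'', is therefore chasing an unnecessary and strictly stronger conclusion. Lemma~\ref{lem4.3} is in any case stated for a spectrum of $\mu$, not a mere orthogonal set, so it cannot be invoked here.

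Once the statement is read correctly, the paper's argument is far more elementary than what you outline. It proceeds coordinatewise: writing $W_j$ for the $j$-th coordinate projection, one first shows $W_j(\Lambda)$ is infinite for every $j$ (otherwise two distinct elements of $\Lambda$ share a $j$-th coordinate, so their difference has $j$-th coordinate zero, while every element of $(R_1\cdots R_k)\mathcal{Z}(m_{D_k})$ has $j$-th coordinate in $p_{1,j}\cdots p_{k,j}\bigl(\tfrac{l}{m}+\mathbb{Z}\bigr)$ with $m\nmid l$, hence nonzero). Then, fixing $j$, one argues by contradiction: if $m\nmid p_{k,j}$ for all $k>N$, a short pigeonhole computation on the $j$-th coordinates forces $\#W_j(\Lambda)\le m^N$. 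No equicontinuity, no ``rank-deficient lattice'' heuristic, and no block decomposition is needed. Several of your intermediate claims are also unjustified: elements of an orthogonal set $\Lambda$ need not lie in $\mathbb{Z}^n$ in this section (there is no assumption $R_1=\diag[m,\dots,m]$ here), so ``the difference is an integer vector'' is unfounded; the sets $(R_1\cdots R_k)\mathcal{Z}(m_{D_k})$ are unions of full-rank affine lattices, not rank-deficient ones; and Lemma~\ref{lem2.8} has no bearing on the combinatorics of a discrete orthogonal set.

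For sufficiency your idea is in the right spirit but heavier than required: the paper simply chooses increasing cut-points $l_k$ so that between $l_{k-1}$ and $l_k$ every coordinate $i$ sees at least one index with $m\mid p_{\cdot,i}$, sets $\lambda_k=m^{-1}R_1\cdots R_{l_k}\nu_{l_k,1}^{(1)}$, and checks directly that $R_{l_i+1}\cdots R_{l_j}\nu_{l_j,1}^{(1)}\in m\mathbb{Z}^n$, whence $\lambda_i-\lambda_j\in\mathcal{Z}(\widehat{\mu})$.
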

\begin{proof}
We demonstrate the sufficiency by constructing an infinite orthogonal set. There exists a sequence \( \{l_k\}_{k=1}^{\infty} \) such that \begin{equation*}
l_{k-1}<j_{1_{k_{1}}}\le j_{2_{k_{2}}}\le \cdots \le j_{n_{k_{n}}}<l_{k}.
\end{equation*}
Write
\begin{equation*}
	\lambda_{k}=m^{-1}R_{1}R_{2}\cdots R_{l_{k}}\nu_{l_{k},1}^{(1)}\;\text{and}\;\Lambda=\{\lambda_{k}\}_{k=1}^{\infty}.
\end{equation*}
Thus, for any two distinct $\lambda$, $\lambda^{'}\in\Lambda$, it follows that
\begin{equation*}
	\lambda-\lambda^{'}=\lambda_{i}-\lambda_{j}=m^{-1}R_{1}R_{2}\cdots R_{l_{i}}(\nu_{l_{i},1}^{(1)}-R_{l_{i}+1}\cdots R_{l_{j}}\nu_{l_{j},1}^{(1)}).
\end{equation*}
According to the definition of the sequence \( \{l_{k}\}_{k=1}^{\infty} \), we know that \(R_{l_{i}+1}\cdots R_{l_{j}}\nu_{l_{j},1}^{(1)}\in m\mathbb{Z}^{n} \). Hence,
\begin{equation*}
		\lambda-\lambda^{'}\in R_{1}R_{2}\cdots R_{l_{i}}\mathcal{Z}(\widehat{\delta}_{D_{l_{i}}})\subset\mathcal{Z}(\widehat{\mu}).
\end{equation*}
Consequently, it follows that \( (\Lambda - \Lambda) \setminus \{0\} \subset \mathcal{Z}(\widehat{\mu}) \), which implies that \( \Lambda \) is an infinite orthogonal set for \( \mu \).

Next we prove the necessity. Suppose that \(0\in\Lambda \) is an infinite orthogonal set of \(\mu \). Then
\begin{equation}\label{4.1}
	(\Lambda-\Lambda)\backslash\{0\}\subset\mathcal{Z}(\widehat{\mu})=\cup_{k=1}^{\infty}R_{1}R_{2}\cdots R_{k}\cup_{j=1}^{\phi(k)}\cup_{i=1}^{m-1}(m^{-1}\nu_{k,j}^{(i)}+\mathbb{Z}^{n}).
\end{equation}
According to \eqref{4.1}, it is straightforward to deduce that
\begin{equation}\label{4.2}
	(W_{j}(\Lambda)-W_{j}(\Lambda))\backslash\{0\}\subset\cup_{k=1}^{\infty}p_{1,j}p_{2,j}\cdots p_{k,j}\cup_{l=1}^{m-1}(\dfrac{l}{m}+\mathbb{Z}^{n})
\end{equation}for each \( j\in\{1,2,\cdots,n\} \). In fact, \( W_{j}(\Lambda) \) is an infinite set. Without loss of generality, we only need to prove that $ W_{1}(\Lambda) $ is infinite.
If \( W_{1}(\Lambda) \) is finite, then by the Pigeonhole Principle, there exist two distinct vectors \( \lambda_{1} = (\lambda_{1,1}, \lambda_{2,1}, \cdots, \lambda_{n,1})^{t} \) and \( \lambda_{2} = (\lambda_{1,2}, \lambda_{2,2}, \cdots, \lambda_{n,2})^{t} \in \Lambda \) such that
\begin{equation*}
\lambda_{1}-\lambda_{2}=(0,\lambda_{2,1}-\lambda_{2,2},\cdots,\lambda_{n,1}-\lambda_{n,2})^{t}.
\end{equation*}
Then, $ \lambda_{1}-\lambda_{2}\notin\mathcal{Z}(\widehat{\mu})\cup\{0\} $.
This contradicts the assumption that \( \Lambda \) is an infinite orthogonal set of \( \mu \). Therefore, \( W_j(\Lambda) \) is also an infinite set for all \( j \in \{1, \cdots, n\} \).
In addition, we will prove that for every fixed \(j\in\{1,2,\cdots,n\} \), there are infinite elements in the sequence \(\{p_ {k,j}\}_{k=1}^{\infty} \) that can be divisible by \(m\). Proof by contradiction, there exist \( j\in\{1,2,\cdots,n\} \) and \( N>0 \) such that for \( k > N \), \( m \nmid p_{k,j} \). Then, 
%for any two distinct \( \beta_{1}, \beta_{2} \in W_{j}(\Lambda) \), we may assume \( \beta_{1} = m^{-1} p_{1,j} p_{2,j} \cdots p_{t_{1},j} a \) and \( \beta_{2} = m^{-1} p_{1,j} p_{2,j} \cdots p_{t_{2},j} b \), where \( t_{1}, t_{2} > N \) and \( a, b \in \cup_{i=1}^{m-1}(i + m\mathbb{Z}) \). Thus
% \begin{equation*}
%\beta_{1}-\beta_{2}=p_{1,j}p_{2,j}\cdots p_{N,j}m^{-1}(p_{N+1,j}\cdots p_{t_{1},j}a-p_{N+1,j}\cdots p_{t_{2},j}b).
%\end{equation*}
%Since \( m \) is prime, we can infer that \( \beta_{1}-\beta_{2}\in p_{1,j}p_{2,j}\cdots p_{N,j}\cup_{i=1}^{m-1}(\frac{i}{m} +\mathbb{Z}) \). Hence, one has
\begin{equation*}
	W_j(\Lambda)\backslash\{0\} \subset	(W_j(\Lambda)-	W_j(\Lambda))\backslash\{0\}\subset \cup_{k=1}^{N}p_{1,j}p_{2,j}\cdots p_{k,j}\cup_{i=1}^{m-1}(\frac{i}{m} +\mathbb{Z}).
\end{equation*}
We obtain \( \# W_j(\Lambda)\le m^{N} \), which contradicts the fact that \(W_j(\Lambda) \) is an infinite set.
Therefore, the proof is complete.
\end{proof}

To prove the necessity of Theorem \ref{thm1.2}, we also require the following key lemma, the proof of which is inspired by \cite{CLZ}.
\begin{lemma}\label{lem4.2}
For $k\ge2, $ if \( \mu_k \) is a spectral measure, then \( m \mid p_{k,j} \), \( 1\le j\le n \).
\end{lemma}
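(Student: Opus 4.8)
The plan is to argue by contradiction. Suppose $\mu_k$ is a spectral measure with spectrum $\Lambda$ (we may assume $0\in\Lambda$), and suppose that $m\nmid p_{k,j_0}$ for some coordinate $j_0\in\{1,\dots,n\}$. Since $\mu_k=\delta_{R_1^{-1}D_1}\ast\cdots\ast\delta_{R_1^{-1}\cdots R_k^{-1}D_k}$ is a finite convolution of uniform discrete measures, $\#\Lambda=\#D_1\cdots\#D_k=m^k$, and the zero set $\mathcal{Z}(\widehat{\mu}_k)=\bigcup_{s=1}^{k}R_1\cdots R_s\,\mathcal{Z}(m_{D_s})$ is a finite union of rationally structured affine pieces, each of the form $R_1\cdots R_s(\tfrac{i}{m}\nu_{s,j}+\mathbb{Z}^n)$. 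The first step is to pass to the $j_0$-th coordinate: writing $W_{j_0}$ for the projection onto coordinate $j_0$ as in the proof of Lemma \ref{lem4.1}, one has $(W_{j_0}(\Lambda)-W_{j_0}(\Lambda))\setminus\{0\}\subset\bigcup_{s=1}^{k}p_{1,j_0}\cdots p_{s,j_0}\bigcup_{i=1}^{m-1}(\tfrac{i}{m}+\mathbb{Z})$, exactly the one-dimensional reduction used there.

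Next I would set up a decomposition of $\Lambda$ along the lines of Lemma \ref{lem4.3} (the tool imported from \cite{CLZ}). Choose a maximal orthogonal set $\Lambda'=\{\lambda_0=0,\lambda_1,\dots,\lambda_t\}$ of $\mu_{k-1}$ inside $\Lambda$; since $(R_k^{-1}D_k,\tfrac1m R_k^t C_{k,i_k})$ is a compatible pair, $\#\Lambda'=m^{k-1}$ and $\Lambda$ splits into $m^{k-1}$ translated classes $\Lambda^{(0)},\dots,\Lambda^{(t)}$ as in \eqref{4.11}–\eqref{4.12}, with differences within a class lying in $\mathcal{Z}(\widehat{\delta}_{R_1^{-1}\cdots R_k^{-1}D_k})=R_1\cdots R_k\mathcal{Z}(m_{D_k})$ and differences across classes lying in $\mathcal{Z}(\widehat{\mu}_{k-1})$. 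Each class $\Lambda^{(i)}$ therefore has the form $\lambda_i+R_1\cdots R_{k-1}\Delta_i$ where $\Delta_i\subset\tfrac1m R_k^t C_{k,i}+\mathbb{Z}^n$ (after absorbing $R_1\cdots R_{k-1}=\mathrm{diag}[p_{1,j}\cdots p_{k-1,j}]_j$). The heart of the matter is the coordinate $j_0$: within a single class, the $j_0$-components of its $m$ elements differ by elements of $p_{1,j_0}\cdots p_{k,j_0}\cdot\tfrac1m\{0,c^{(1)},\dots,c^{(m-1)}\}$, i.e. all $m$ of them collapse modulo $p_{1,j_0}\cdots p_{k-1,j_0}\mathbb{Z}$ into a single residue (since $m\nmid p_{k,j_0}$ and $m$ is prime, multiplication by $p_{k,j_0}$ is a bijection on $\mathbb{Z}/m\mathbb{Z}$, so the $m$ values $\tfrac{c^{(l)}p_{k,j_0}}{m}$ run over all of $\tfrac1m\mathbb{Z}/\mathbb{Z}$ — this is the good direction).

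The contradiction then comes from a counting/pigeonhole argument in coordinate $j_0$, parallel to the end of the proof of Lemma \ref{lem4.1}: the differences $W_{j_0}(\Lambda)-W_{j_0}(\Lambda)$ are forced into finitely many cosets $p_{1,j_0}\cdots p_{k-1,j_0}\bigcup_{i=1}^{m-1}(\tfrac im+\mathbb{Z})$ together with a contribution from the finitely many $\mathcal{Z}(m_{D_s})$ with $s<k$, whereas orthogonality of the full set $E_\Lambda$ in $L^2(\mu_k)$ (with $\#\Lambda=m^k$) forces $W_{j_0}(\Lambda)$ to contain $m^k$ distinct values with prescribed pairwise differences — and one shows these constraints are incompatible because the $k$-th "layer" of zeros, which is the only one genuinely using $R_k$, fails to separate points in the $j_0$-direction when $m\nmid p_{k,j_0}$. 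Concretely, I expect to show that if $m\nmid p_{k,j_0}$ then any orthogonal set of $\mu_k$ that is "complete in the first $k-1$ levels" cannot be enlarged to size $m^k$, contradicting $\#\Lambda=m^k$; equivalently, invoking Lemma \ref{lem4.3}(i)–(ii) would force $\mu_{>k-1}$-type pieces to be spectral in a way that conflicts with $m\nmid p_{k,j_0}$.

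The main obstacle is the middle step: correctly tracking how the prime $m$ interacts with the divisibility of $p_{k,j_0}$ when $D_k$ is only constrained through its mask's zero set \eqref{eq1.5} rather than being an explicit digit set. One has to control the possible $\nu_{k,i}$ and the induced sets $C_{k,i}$ in every coordinate simultaneously, and rule out "accidental" compatibilities coming from the lower levels $s<k$ whose zero sets $R_1\cdots R_s\mathcal{Z}(m_{D_s})$ may overlap the troublesome coset in coordinate $j_0$. Handling this cleanly will likely require isolating the top level by a quotient argument (reduce mod $R_1\cdots R_{k-1}\mathbb{Z}^n$, or equivalently work with $\mu_k\circ(R_1\cdots R_{k-1})$ via Lemma \ref{lem2.6}) so that only $\mathcal{Z}(m_{D_k})$ and the prime $m$ remain, at which point the bijectivity of $x\mapsto p_{k,j_0}x$ on $\mathbb{Z}/m\mathbb{Z}$ delivers the needed rigidity.
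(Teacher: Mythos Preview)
Your proposal has a genuine gap: the decomposition you set up cannot produce a contradiction. You take a maximal orthogonal set $\Lambda'$ of $\mu_{k-1}$ inside $\Lambda$ and assert $\#\Lambda'=m^{k-1}$; this equality is unjustified (maximality only gives $\#\Lambda'\le m^{k-1}$), and the compatible-pair fact you cite for $(R_k^{-1}D_k,\tfrac1m R_k^t C_{k,i_k})$ does not imply it. Worse, even granting $\#\Lambda'=m^{k-1}$, each class $\Lambda^{(i)}$ has differences in $\{0\}\cup R_1\cdots R_k\mathcal{Z}(m_{D_k})$ and hence $\#\Lambda^{(i)}\le m$, so the best you can conclude is $\#\Lambda\le m^{k-1}\cdot m=m^k$ --- no contradiction. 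You also contradict yourself mid-paragraph: you first claim the $j_0$-components ``collapse modulo $p_{1,j_0}\cdots p_{k-1,j_0}\mathbb{Z}$ into a single residue'' and then immediately observe that multiplication by $p_{k,j_0}$ is a bijection on $\mathbb{Z}/m\mathbb{Z}$, which means they do \emph{not} collapse. Finally, Lemma~\ref{lem4.3} as stated concerns the infinite measure $\mu$ and its tail $\mu_{>k_0}$; it is not the right tool for the finite convolution $\mu_k$.

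The paper's argument is a direct cardinality bound with a different decomposition level. Set $U_s=R_1\cdots R_s\,\mathcal{Z}(m_{D_s})$, so $\mathcal{Z}(\widehat{\mu}_k)=\bigcup_{s=1}^k U_s$. The key estimate is $\#(\Lambda\cap U_{k-1})<m$ and $\#(\Lambda\cap U_k)<m$: if $\Lambda\cap U_{k-1}$ contained $m$ points, then in coordinate $j_0$ two of their preimages under $R_1\cdots R_{k-1}$ would differ by an integer (pigeonhole on the $m-1$ residues $\tfrac{1}{m},\dots,\tfrac{m-1}{m}$), so the corresponding $\lambda$-difference has $j_0$-coordinate in $p_{1,j_0}\cdots p_{k-1,j_0}\mathbb{Z}$; one checks this lies in no $U_t$ (the case $t=k$ is exactly where $m\nmid p_{k,j_0}$ and primality of $m$ are used), contradicting orthogonality. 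With this in hand, one decomposes $\Lambda$ via a maximal orthogonal set of $\mu_{k-2}$ (not $\mu_{k-1}$): there are at most $m^{k-2}$ classes, and each class $\Lambda^{(i)}\subset\{\lambda_i\}\cup(\lambda_i+U_{k-1})\cup(\lambda_i+U_k)$ has at most $1+(m-1)+(m-1)=2m-1$ elements by the estimate above (applied to the translated orthogonal set $\Lambda-\lambda_i$). Hence $\#\Lambda\le m^{k-2}(2m-1)<m^k=\dim L^2(\mu_k)$, the desired contradiction. The point you are missing is that going two levels down, not one, is what creates the gap $2m-1<m^2$.
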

\begin{proof}
Suppose on the contrary that $m\nmid p_{k,j_{0}}$ for some $1\le j_{0}\le n.$ Let $0\in \Lambda$ be a spectrum of $\mu_{k}$, then
\begin{equation*}
	\Lambda\backslash\{0\}\subset (\Lambda-\Lambda)\backslash\{0\}\subset\mathcal{Z}(\widehat{\mu}_{k})= \cup_{i=1}^{k}R_{1}R_{2}\cdots R_{i}\cup_{j=1}^{\phi(i)}\cup_{t=1}^{m-1}(m^{-1}\nu_{i,j}^{(t)}+\mathbb{Z}^{n}).
\end{equation*}
Denote $U_{i}:=R_{1}R_{2}\cdots R_{i}\cup_{j=1}^{\phi(i)}\cup_{t=1}^{m-1}(m^{-1}\nu_{i,j}^{(t)}+\mathbb{Z}^{n})$, then $\mathcal{Z}(\widehat{\mu}_{k})= \cup_{i=1}^{k}U_{i}.$

Next, we prove that \( \#(\Lambda\cap U_{i})<m \) for $i=k-1,k$. If not, we can assume \(\#(\Lambda\cap U_{k-1})=m \), denoted as \( \Lambda\cap U_{k-1}:= \{\lambda_{1}, \lambda_{2},\cdots,\lambda_{m}\} \), where $\lambda_{i}=R_{1}R_{2}\cdots R_{k-1}a_{i}$ and $a_{i}\in\cup_{j=1}^{\phi(k-1)}\cup_{t=1}^{m-1}(m^{-1}\nu_{k-1,j}^{(t)}+\mathbb{Z}^{n}) $ for $i\in\{1,2,\cdots,m\}$. Since for any \( j \in \{1, 2, \cdots, n\} \) and \( i \in \{1, 2, \cdots, m\} \), \( W_j(a_i) \in \cup_{i=1}^{m-1} ( \frac{i}{m} + \mathbb{Z} ) \), by the pigeonhole principle, there must exist two indices \( i_1(j), i_2(j) \in \{1, 2, \cdots, m\} \) such that
\[
W_j(a_{i_1(j)}) - W_j(a_{i_2(j)}) \in \mathbb{Z}.
\]
This also shows that, for any \(j \),
\begin{equation*}
W_j(\lambda_{i_{1}(j)})-W_j(\lambda_{i_{2}(j)})\in p_{1,j}p_{2,j}\cdots p_{k-1,j}\mathbb{Z}.
\end{equation*}
Therefore, \( \lambda_{i_{1}(j)}-\lambda_{i_{2}(j)}\notin R_{1}R_{2}\cdots R_{k-1} \mathcal{Z}(\widehat{\delta}_{D_{k-1}}) \). Letting \( j = j_0 \), and given that \( m \nmid p_{k, j_0} \) and \( m \) is prime, we have \( \lambda_{i_1(j_0)} - \lambda_{i_2(j_0)} \notin R_1 R_2 \cdots R_k \mathcal{Z}(\widehat{\delta}_{D_k}) \).
Clearly, when \( t < k - 1 \), it follows that
\begin{equation*}
\lambda_{i_1(j_0)} - \lambda_{i_2(j_0)} \notin R_1 R_2 \cdots R_t \mathcal{Z}(\widehat{\delta}_{D_t}).
\end{equation*}
Hence, \( \lambda_{i_1(j_0)} - \lambda_{i_2(j_0)} \notin\mathcal{Z}(\widehat{\mu}_{k}) \), which contradicts the fact that \( \Lambda\) is a spectrum of \( \mu_{k} \). Then, we get $\#(\Lambda\cap U_{k-1})<m.$ For \( i = k \), since the proof method is similar, we will not detail it.

It is easy to see that $\Lambda=\{0\}\cup\cup_{i=1}^{k}(\Lambda\cap U_{i}).$ If $k=2$, then
\begin{equation*}
\#\Lambda\le\#(\Lambda\cap U_{1})+\#(\Lambda\cap U_{2})+1\le 2m-1< m^{2},\;\;(m>1).
\end{equation*}
This means that $\Lambda$ is not a spectrum of $\mu_{2}$ since $\dim(L^{2}(\mu_{2}))=m^{2}$, a contradiction. If $k>2$, we set $0\in\Lambda^{'}\subset\Lambda$ is a maximal orthogonal set of $\mu_{k-2}.$ Then, $\#\Lambda^{'}\le m^{k-2}.$ Write $ \Lambda^{'}=\{\lambda_{0},\lambda_{1},\cdots,\lambda_{t}\}$, where $\lambda_{0}=0$, $t\le m^{k-2}-1. $ Then $\Lambda$ can be written as the disjoint union $\Lambda=\cup_{i=0}^{t}\Lambda^{(i)}$, where
\begin{equation*}
	\Lambda^{(0)}=\{\lambda\in\Lambda:\lambda-\lambda_{0}\in U_{k-1}\cup U_{k}\cup \{0\}\}
\end{equation*}
and
\begin{equation*}
\Lambda^{(i)}=\{\lambda\in\Lambda:\lambda-\lambda_{i}\in U_{k-1}\cup U_{k}\cup \{0\}\}\backslash(\cup_{j=0}^{i-1}\Lambda^{(j)}),\;i=1,2,\cdots,t.
\end{equation*}
Clearly, $\Lambda^{(i)}\subset\{\lambda_{i}\}\cup\{\lambda\in\Lambda:\lambda-\lambda_{i}\in U_{k-1}\}\cup\{\lambda\in\Lambda:\lambda-\lambda_{i}\in U_{k}\} .$
It is easy to check that $\#\{\lambda\in\Lambda:\lambda-\lambda_{i}\in U_{j}\}\le m-1 $ for $j=k-1,k.$ In fact, if \(\#\{\lambda \in \Lambda : \lambda - \lambda_{i} \in U_{k-1}\} = m\), then for any \(\alpha, \beta \in \{\lambda \in \Lambda : \lambda - \lambda_{i} \in U_{k-1}\}\),
\begin{equation*}
\alpha - \beta = (\alpha - \lambda_{i}) - (\beta - \lambda_{i}) \in \mathcal{Z}(\widehat{\mu}_{k}).
\end{equation*}
This means that $\{\lambda\in\Lambda:\lambda-\lambda_{i}\in U_{k-1}\}-\lambda_{i}\subset U_{k-1} $ is an orthogonal set of $\mu_{k}$. However, we know that $\#\{\lambda\in\Lambda:\lambda-\lambda_{i}\in U_{k-1}\}\le\#(\Lambda\cap U_{k-1})<m$, a contradicts. Hence, $\#\{\lambda\in\Lambda:\lambda-\lambda_{i}\in U_{k-1}\}\le m-1 $. When \( j = k \), the proof method is similar, and we will omit it. Consequently, we find that
\begin{equation*}
	\#\Lambda\le\sum_{i=0}^{t}\#\Lambda^{(i)}\le m^{k-2}(2m-1)< m^{k}=\dim(L^{2}(\mu_{k})).
\end{equation*}
This implies that \( \Lambda \) is not a spectrum of \( \mu_{k} \), leading to a contradiction. Therefore, we have completed the proof.
\end{proof}
Next, we will prove Theorem \ref{thm1.2}. The sufficiency follows from Theorem \ref{thm1.4}, so we will concentrate on the necessity. For this, we will construct a set $\Lambda^{(i)}$ that satisfies conditions (i) and (ii) of Lemma \ref{lem4.3}, and then use Lemma \ref{lem4.2} to complete the proof.
\begin{proof}[\textbf{ Proof of Theorem \ref{thm1.2} }]
	Suppose that there are $k\ge2$ and $1\le j_{0}\le n$ such that $m\nmid p_{k,j_{0}}$. From Lemma \ref{lem4.1}, it follows that there exists $k_{0}\ge2$ such that $m\nmid p_{k_{0},j_{0}}$ but $m\mid p_{k_{0}+1,j_{0}} .$ Let $0\in\Lambda$ be a spectrum of $\mu$ and let $0\in\Lambda^{'}\subset\Lambda$ is a maximal orthogonal set of $\mu_{k_{0}}.$ Then, $\#\Lambda^{'}\le m^{k_{0}}.$ Write $ \Lambda^{'}=\{\lambda_{0},\lambda_{1},\cdots,\lambda_{t}\}$, where $\lambda_{0}=0$, $t\le m^{k_{0}}-1. $ Similar to \eqref{4.11} and \eqref{4.12}, $\Lambda$ can be expressed as the disjoint union $\Lambda=\cup_{i=0}^{t}\Lambda^{(i)}$, where
	\begin{equation*}
		\Lambda^{(0)}=\{\lambda\in\Lambda:\lambda-\lambda_{0}\in  \mathcal{Z}(\widehat{\mu}_{>k_{0}})\cup\{0\}\}
	\end{equation*}
	and
	\begin{equation*} \Lambda^{(i)}=\{\lambda\in\Lambda:\lambda-\lambda_{i}\in \mathcal{Z}(\widehat{\mu}_{>k_{0}})\cup\{0\}\}\backslash(\cup_{j=0}^{i-1}\Lambda^{(j)}),\;i=1,2,\cdots,t.
	\end{equation*}
Furthermore, we will prove that \( \Lambda^{(i)} \) satisfies the conditions $(i)$ and $(ii)$ of Lemma \ref{lem4.3}, i.e. \\$(i)$ for $0\le i\le t$, \( (\Lambda^{(i)}-\Lambda^{(i)})\backslash\{0\} \subset\mathcal{Z}(\widehat{\mu}_{>k_{0}})\); $(ii)$ for $0\le i\neq j\le t$, \( (\Lambda^{(i)}-\Lambda^{(j)})\subset\mathcal{Z}(\widehat{\mu}_{k_{0}})\).\\
For $(i)$, suppose that there exist two distinct elements $\alpha,\beta\in\Lambda^{(i)}$ such that $\alpha-\beta\in\mathcal{Z}(\widehat{\mu}_{k_{0}})\backslash\mathcal{Z}(\widehat{\mu}_{>k_{0}}) .$ Then
\begin{equation}\label{4.3}
	\alpha-\beta=(\alpha-\lambda_{i})-(\beta-\lambda_{i})\in\mathcal{Z}(\widehat{\mu}_{k_{0}})\backslash\mathcal{Z}(\widehat{\mu}_{>k_{0}}).
\end{equation}
If $\alpha=\lambda_{i} $ or $\beta=\lambda_{i}$, then either $\alpha-\lambda_{i}\in\mathcal{Z}(\widehat{\mu}_{k_{0}})\backslash\mathcal{Z}(\widehat{\mu}_{>k_{0}}) $ or $-(\beta-\lambda_{i})\in\mathcal{Z}(\widehat{\mu}_{k_{0}})\backslash\mathcal{Z}(\widehat{\mu}_{>k_{0}}) $, which leads to a contradiction. If
$\{\alpha-\lambda_{i},\beta-\lambda_{i}\}\subset\mathcal{Z}(\widehat{\mu}_{>k_{0}})$, we may assume \( \alpha-\lambda_{i}=R_{1} R_{2} \cdots R_{t_{1}} a \) and \( \beta-\lambda_{i}=R_{1} R_{2} \cdots R_{t_{2}} b \), where \( t_{1}, t_{2} > k_{0} \) and \( a\in\mathcal{Z}(\widehat{\delta}_{D_{t_{1}}}), b \in \mathcal{Z}(\widehat{\delta}_{D_{t_{2}}}) \). Thus
\begin{equation*}
	W_{j_{0}}(\alpha-\lambda_{i})-W_{j_{0}}(\beta-\lambda_{i})=p_{1,j_{0}}p_{2,j_{0}}\cdots p_{k_{0},j_{0}}(p_{k_{0}+1,j_{0}}\cdots p_{t_{1},j_{0}}W_{j_{0}}(a)-p_{k_{0}+1,j_{0}}\cdots p_{t_{2},j_{0}}W_{j_{0}}(b)).
\end{equation*}
Since $m\mid p_{k_{0}+1,j_{0}}$, then we get
\begin{equation*}
W_{j_{0}}(\alpha-\lambda_{i})-W_{j_{0}}(\beta-\lambda_{i})\in p_{1,j_{0}}p_{2,j_{0}}\cdots p_{k_{0},j_{0}}\mathbb{Z}\nsubseteq W_{j_{0}}(\mathcal{Z}(\widehat{\mu}_{k_{0}})).
\end{equation*}
This implies that \( \alpha-\beta\notin\mathcal{Z}(\widehat{\mu}_{k_{0}})\), which contradicts \eqref{4.3}. Therefore, condition $(i)$ holds.\\
For $(ii)$, suppose that there exist two elements $\alpha\in\Lambda^{(i)},\beta\in\Lambda^{(j)}(i\neq j)$ such that $\alpha-\beta\in\mathcal{Z}(\widehat{\mu}_{>k_{0}})\backslash\mathcal{Z}(\widehat{\mu}_{k_{0}}) .$ If $\alpha=\lambda_{i} $ and $\beta=\lambda_{j}$, then $\alpha-\beta=\lambda_{i}-\lambda_{j}\in(\Lambda^{'}-\Lambda^{'})\backslash\{0\}\subset\mathcal{Z}(\widehat{\mu}_{k_{0}}) ,$ a contradiction. If $\alpha=\lambda_{i} $ or $\beta=\lambda_{j}$, then
 $$\alpha-\beta=(\lambda_{i}-\lambda_{j})-(\beta-\lambda_{j})
\quad {\rm or } \quad \alpha-\beta=(\alpha-\lambda_{i})-(\lambda_{j}-\lambda_{i}). $$
Notice that $\lambda_{i}-\lambda_{j}\in\mathcal{Z}(\widehat{\mu}_{k_{0}}) $ and $\alpha-\lambda_{i},\beta-\lambda_{j}\in\mathcal{Z}(\widehat{\mu}_{>k_{0}}) .$ Using a similar method as for $(i)$, we obtain \( \alpha-\beta\in \mathcal{Z}(\widehat{\mu}_{k_{0}})\), which is a contradiction. If \(\{\alpha - \lambda_{i}, \beta - \lambda_{j}\} \subset \mathcal{Z}(\widehat{\mu}_{>k_{0}})\), then \( (\alpha - \lambda_{i}) - (\beta - \lambda_{j}) \in \mathcal{Z}(\widehat{\mu}_{>k_{0}})\). Similarly,
\begin{equation*}
	\alpha-\beta=(\alpha-\lambda_{i})-(\beta-\lambda_{j})-(\lambda_{j}-\lambda_{i})\in \mathcal{Z}(\widehat{\mu}_{k_{0}}),
\end{equation*}
which is a contradiction. Hence, condition $(ii)$ holds.

Clearly, \( \Lambda \setminus \{0\} \subset \mathcal{Z}(\widehat{\mu}) \subset m^{-1} \mathbb{Z}^{n} \). By Lemma \ref{lem4.3}, we establish that \( \mu_{k_{0}} \) is a spectral measure. However, since \( m \nmid p_{k_{0}, j_{0}} \), this contradicts Lemma \ref{lem4.2}. Thus, we have completed the proof of Theorem \ref{thm1.2}.
\end{proof}

\section{\bf  Proof of Theorem \ref{thm1.3} and some examples}
Firstly, we prove Theorem \ref{thm1.3}. By Lemma \ref{lem2.6}, without loss of generality, we can assume $ R_{1}=\text{diag}[m,\cdots,m] $. Then, from \eqref{2.2}, one has
\begin{equation}\label{5.1}
 \mu=\delta_{\frac{1}{m}D_{1}}*(\mu_{k>1}\circ m) \quad \text{and} \quad \widehat{\mu}(\xi)=m_{D_{1}}(\frac{\xi}{m})\widehat{\mu}_{k>1}(\frac{\xi}{m}).
\end{equation}
When \(\phi(k) = 1\), it follows that
\begin{equation}\label{5.2}
	\mathcal{Z}(\widehat{\mu}) = \cup_{k=1}^{\infty}(R_{k}\cdots R_{1})^{t} \mathcal{Z}(m_{D_{k}}) = \cup_{k=1}^{\infty}(R_{k}\cdots R_{1})^{t} \cup_{l=1}^{m-1} (\dfrac{l\nu_{k}}{m} + \mathbb{Z}^{n}).
\end{equation}
Let $ \Lambda $ be a spectrum of $ \mu $ with $ 0\in \Lambda $. Note that $( \Lambda-\Lambda)\backslash\{0\} \subset \mathcal{Z}(\widehat{\mu})  $. From \eqref{5.2}, it follows that $ \Lambda \subset \mathbb{Z}^{n} $.
Define
\begin{equation*}
 \Phi:=\{(i_{1},\cdots,i_{n})^{t}:i_{1},\cdots,i_{n}\in\{0,1,\cdots,m-1\}\}.
\end{equation*}
For any $ \lambda\in \Lambda, $ there exists a unique $ \kappa \in \Phi $ such that $ \lambda=\kappa+m\omega $ for some $ \omega \in \mathbb{Z}^{n} $. Furthermore, we can define
\begin{equation*}
\Lambda_{\kappa}=\{\omega\in \mathbb{Z}^{n}:\kappa+m\omega\in \Lambda\}
\end{equation*}
such that $ \Lambda $ can be decomposed as
\begin{equation*}
 \Lambda=\cup_{\kappa \in \Phi}(\kappa+m\Lambda_{\kappa}),
\end{equation*}
where $ \kappa+m\Lambda_{\kappa}=\emptyset $ if $ \Lambda_{\kappa}=\emptyset $. One can easily verify that this is a disjoint union. Since $ 0\in \Lambda $, it follows that $  \Lambda_{0}\ne \emptyset.  $

\begin{lemma}\label{lem5.1}
	Let \( \Lambda \) be a spectrum of \( \mu \) with \( 0 \in \Lambda \). Then, for each \( \kappa \in \Phi \), \( \Lambda_{\kappa} \) is either an empty set or an orthogonal set of \( \mu_{k>1} \).
\end{lemma}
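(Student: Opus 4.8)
The plan is to exploit the factorisation $\widehat{\mu}(\xi)=m_{D_{1}}(\xi/m)\,\widehat{\mu}_{k>1}(\xi/m)$ recorded in \eqref{5.1}, together with the elementary criterion from Section~2 that a countable set is an orthogonal set for a compactly supported probability measure precisely when its difference set, with the origin removed, lies in the zero set of the Fourier transform.

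First I would dispose of the trivial cases: if $\Lambda_{\kappa}=\emptyset$ there is nothing to prove, and if $\Lambda_{\kappa}$ is a singleton it is automatically an orthogonal set. So assume $\Lambda_{\kappa}$ contains two distinct points $\omega,\omega'$. Then $\kappa+m\omega$ and $\kappa+m\omega'$ are distinct elements of the spectrum $\Lambda$, so $m(\omega-\omega')\in(\Lambda-\Lambda)\setminus\{0\}\subset\mathcal{Z}(\widehat{\mu})$. Evaluating the factorisation at $\xi=m(\omega-\omega')$ gives $0=\widehat{\mu}(m(\omega-\omega'))=m_{D_{1}}(\omega-\omega')\,\widehat{\mu}_{k>1}(\omega-\omega')$, so at least one of the two factors vanishes.

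Next I would rule out $m_{D_{1}}(\omega-\omega')=0$. Since $\omega-\omega'\in\mathbb{Z}^{n}$, it suffices to observe that $\mathcal{Z}(m_{D_{1}})\cap\mathbb{Z}^{n}=\emptyset$. By \eqref{5.2} with $\phi(1)=1$ one has $\mathcal{Z}(m_{D_{1}})=\bigcup_{l=1}^{m-1}\bigl(\tfrac{l}{m}\nu_{1}+\mathbb{Z}^{n}\bigr)$, and for each $1\le l\le m-1$ the vector $\tfrac{l}{m}\nu_{1}$ has no integer coordinate, because every coordinate of $\nu_{1}$ lies in $\{1,\dots,m-1\}$ and $m$ is prime; hence each coset $\tfrac{l}{m}\nu_{1}+\mathbb{Z}^{n}$ is disjoint from $\mathbb{Z}^{n}$ and the first factor cannot vanish. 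Therefore $\widehat{\mu}_{k>1}(\omega-\omega')=0$, i.e. $\omega-\omega'\in\mathcal{Z}(\widehat{\mu}_{k>1})$. As $\omega,\omega'$ were arbitrary distinct elements of $\Lambda_{\kappa}$, we conclude $(\Lambda_{\kappa}-\Lambda_{\kappa})\setminus\{0\}\subset\mathcal{Z}(\widehat{\mu}_{k>1})$, and the orthogonality criterion of Section~2 (applied to $\mu_{k>1}$, which is compactly supported by \eqref{1.2}) gives that $\Lambda_{\kappa}$ is an orthogonal set of $\mu_{k>1}$. No step is a genuine obstacle; the only point requiring care is the number-theoretic observation that $\mathcal{Z}(m_{D_{1}})$ misses $\mathbb{Z}^{n}$, which is exactly where the primality of $m$ and the constraint $\nu_{1}\in\{(l_{1},\dots,l_{n})^{t}:l_{i}\in[1,m-1]\cap\mathbb{Z}\}$ enter.
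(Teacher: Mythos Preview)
Your proof is correct and follows essentially the same route as the paper's: take two distinct $\omega,\omega'\in\Lambda_{\kappa}$, use the factorisation \eqref{5.1} at $m(\omega-\omega')$, and observe that the first factor is nonzero because $\omega-\omega'\in\mathbb{Z}^{n}$. The paper dispatches that last point in one stroke by noting $m_{D_{1}}(\omega-\omega')=1$ since $D_{1}\subset\mathbb{Z}^{n}$, so your appeal to the explicit description of $\mathcal{Z}(m_{D_{1}})$ and the primality of $m$ is correct but unnecessary.
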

\begin{proof}
	Suppose that \( \Lambda_{\kappa} \) is a nonempty set for \( \kappa \in \Phi \), and let \( \omega_{0} \neq \omega_{1} \in \Lambda_{\kappa} \). Then \( \kappa + m\omega_{0} \) and \( \kappa + m\omega_{1} \) belong to \( \Lambda \). By \eqref{5.1}, we obtain
\begin{equation*}
	0 = \widehat{\mu}(m(\omega_{0} - \omega_{1})) = m_{D_{1}}(\omega_{0} - \omega_{1}) \widehat{\mu}_{k>1}(\omega_{0} - \omega_{1}) = \widehat{\mu}_{k>1}(\omega_{0} - \omega_{1}),
\end{equation*}
which implies that \( \Lambda_{\kappa} \) is an orthogonal set of \( \mu_{k>1} \).
\end{proof}
\smallskip
To facilitate the construction of the spectrum of \(  \mu_{k>1} \), we need to define some notations. For each $ k\ge1$, write $\Pi_{k,0}=\{\nu_{k,0}^{(0)}=0,\nu_{k,0}^{(1)},\nu_{k,0}^{(2)},\cdots,\nu_{k,0}^{(m-1)} \},$ where $ \nu_{k,0}^{(i)}=i\nu_{k}\pmod{ m\mathbb{Z}^{n}}\in \Phi $. Define
\begin{equation*}
\Pi_{k,i}=\{\nu_{k,i}^{(0)},\nu_{k,i}^{(1)},\cdots,\nu_{k,i}^{(m-1)}\}=\Pi_{k,0}+ \varphi_{k}(i)\pmod{ m\mathbb{Z}^{n}}\subset\Phi,
\end{equation*}
 where $\varphi_{k}(i)$ is a one-to-one mapping from $ \{1,\cdots,m^{n-1}-1\}$ to $H$, and
 \begin{equation*}
 H:=\{(i_{1},i_{2},\cdots,i_{n-1},0)^{t}\backslash\{0\}:i_{1},\cdots, i_{n-1}\in\{0,1,\cdots,m-1\}\}.
 \end{equation*}
 Note that $ \#(\cup_{i=0}^{m^{n-1}-1}\Pi_{1,i})=\#\Phi  $ and $\cup_{i=0}^{m^{n-1}-1}\Pi_{1,i}\subset \Phi $. This implies that $ \cup_{i=0}^{m^{n-1}-1}\Pi_{1,i}=\Phi. $ Hence,
 \begin{equation}\label{5.5}
 	\Lambda=\cup_{\kappa \in \Phi}(\kappa+m\Lambda_{\kappa})=\cup_{i=0}^{m^{n-1}-1}\cup_{\pi \in \Pi_{1,i}}(\pi+m\Lambda_{\pi}).
 \end{equation}
 Fix $ k=1 $. For each \( i\in\{0,1,\cdots,m^{n-1}-1\}\), we arbitrarily select one element \( \pi_i \) from \( \Pi_{1,i} \). Using the selected \( m^{n-1} \) elements, we can define
 \begin{equation}\label{5.4}
 	\Lambda_{\pi_{0},\cdots,\pi_{ m^{n-1}-1}}:=\cup_{i=0}^{m^{n-1}-1}(\frac{\pi_{i}}{m}+\Lambda_{\pi_{i}}) .
 \end{equation}
Next, we will prove that the set \(\Lambda_{\pi_{0},\cdots,\pi_{ m^{n-1}-1}} \) defined by \eqref{5.4} is a spectrum of \( \mu_{k>1} \).

\begin{lemma}\label{lem5.2}
	Let $ \Lambda $ be a spectrum of $ \mu $ with $ 0\in\Lambda $. Then, for any choice of $ \pi_{i}\in \Pi_{1,i}\;\text{for}\;i=0,1,\cdots,m^{n-1}-1, $ the set $ \Lambda_{\pi_{0},\cdots,\pi_{m^{n-1}-1}} $ is a spectrum of $ \mu_{k>1} $.
\end{lemma}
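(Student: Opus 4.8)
The plan is to verify the Jorgensen--Pedersen criterion (Lemma~\ref{lem2.2}) for the pair $\big(\mu_{k>1},\ \Lambda_{\pi_{0},\cdots,\pi_{m^{n-1}-1}}\big)$: first that the exponential system is orthogonal, and then that $Q_{\mu_{k>1},\Lambda_{\pi_{0},\cdots,\pi_{m^{n-1}-1}}}\equiv 1$. Two facts are used repeatedly: the factorization $\widehat\mu(\xi)=m_{D_{1}}(\xi/m)\,\widehat\mu_{k>1}(\xi/m)$ from \eqref{5.1} together with the $\mathbb Z^{n}$-periodicity of $m_{D_{1}}$; and the fact that each $\Pi_{1,i}$ is a spectrum of $\delta_{\frac1m D_{1}}$. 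For the latter, two distinct elements of $\Pi_{1,i}=\Pi_{1,0}+\varphi_{1}(i)\pmod{m\mathbb Z^{n}}$ differ by a vector $\equiv l\nu_{1}\pmod{m\mathbb Z^{n}}$ with $l\in\{1,\cdots,m-1\}$, so that difference divided by $m$ lies in $\mathcal Z(m_{D_{1}})$, and $\#\Pi_{1,i}=m=\dim L^{2}(\delta_{\frac1m D_{1}})$; hence $\sum_{\pi\in\Pi_{1,i}}\big|m_{D_{1}}(\eta+\tfrac{\pi}{m})\big|^{2}=1$ for every $\eta\in\mathbb R^{n}$.

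\emph{Orthogonality.} Take two distinct points $\tfrac{\pi_{i}}{m}+\omega$ and $\tfrac{\pi_{i'}}{m}+\omega'$ of $\Lambda_{\pi_{0},\cdots,\pi_{m^{n-1}-1}}$, with $\omega\in\Lambda_{\pi_{i}}$, $\omega'\in\Lambda_{\pi_{i'}}$. If $i=i'$, then $\omega\neq\omega'$ both lie in $\Lambda_{\pi_{i}}$ and Lemma~\ref{lem5.1} gives $\widehat\mu_{k>1}(\omega-\omega')=0$. If $i\neq i'$, then $\pi_{i}+m\omega$ and $\pi_{i'}+m\omega'$ are distinct elements of the spectrum $\Lambda$ of $\mu$ (the classes $\Pi_{1,i}$ partition $\Phi$, cf.\ the discussion around \eqref{5.5}), so $\widehat\mu\big((\pi_{i}-\pi_{i'})+m(\omega-\omega')\big)=0$; by \eqref{5.1} and periodicity this reads $m_{D_{1}}\!\big(\tfrac{\pi_{i}-\pi_{i'}}{m}\big)\,\widehat\mu_{k>1}\!\big(\tfrac{\pi_{i}-\pi_{i'}}{m}+\omega-\omega'\big)=0$, and the first factor is nonzero. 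Otherwise $\pi_{i}-\pi_{i'}\equiv l\nu_{1}\pmod{m\mathbb Z^{n}}$ for some $l\in\{1,\cdots,m-1\}$; writing $\pi_{i}\equiv a\nu_{1}+\varphi_{1}(i)$, $\pi_{i'}\equiv b\nu_{1}+\varphi_{1}(i')\pmod{m\mathbb Z^{n}}$ and comparing last coordinates --- those of $\varphi_{1}(i),\varphi_{1}(i')\in H\cup\{0\}$ vanish, while the last coordinate of $\nu_{1}$ is a unit modulo the prime $m$ --- forces $a-b\equiv l$, hence $\varphi_{1}(i)\equiv\varphi_{1}(i')\pmod{m\mathbb Z^{n}}$, hence $i=i'$ by injectivity of $\varphi_{1}$, a contradiction. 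Thus $\widehat\mu_{k>1}\!\big(\tfrac{\pi_{i}-\pi_{i'}}{m}+\omega-\omega'\big)=0$, so the system is orthogonal and, by Lemma~\ref{lem2.2}(i), $Q_{\mu_{k>1},\Lambda_{\pi_{0},\cdots,\pi_{m^{n-1}-1}}}\le 1$ --- and this holds for \emph{every} admissible choice of representatives $\pi_{i}\in\Pi_{1,i}$.

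\emph{Completeness.} Since $\Lambda=\bigcup_{i}\bigcup_{\pi\in\Pi_{1,i}}(\pi+m\Lambda_{\pi})$ is a spectrum of $\mu$, Lemma~\ref{lem2.2}(ii), \eqref{5.1} and periodicity yield, for all $\eta\in\mathbb R^{n}$,
\[
1=Q_{\mu,\Lambda}(m\eta)=\sum_{i=0}^{m^{n-1}-1}\ \sum_{\pi\in\Pi_{1,i}}\big|m_{D_{1}}(\eta+\tfrac{\pi}{m})\big|^{2}\,Q_{\mu_{k>1},\Lambda_{\pi}}\!\big(\eta+\tfrac{\pi}{m}\big).
\]
Fix $\eta$ outside the closed null set $\bigcup_{\pi\in\Phi}\big(\mathcal Z(m_{D_{1}})-\tfrac{\pi}{m}\big)$, write $\pi(i,j)$ for the $j$-th element of $\Pi_{1,i}$, and set $p_{i,j}=\big|m_{D_{1}}(\eta+\tfrac{\pi(i,j)}{m})\big|^{2}>0$ and $x_{i,j}=Q_{\mu_{k>1},\Lambda_{\pi(i,j)}}\!\big(\eta+\tfrac{\pi(i,j)}{m}\big)\ge 0$. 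Then $\sum_{j}p_{i,j}=1$ by the spectrum property of $\Pi_{1,i}$, and $\sum_{i}\max_{j}x_{i,j}\le 1$: selecting for each $i$ the representative $\rho_{i}\in\Pi_{1,i}$ attaining $\max_{\pi\in\Pi_{1,i}}Q_{\mu_{k>1},\Lambda_{\pi}}(\eta+\tfrac{\pi}{m})$, the sum $\sum_{i}\max_{j}x_{i,j}$ equals $Q_{\mu_{k>1},\Lambda_{\rho_{0},\cdots,\rho_{m^{n-1}-1}}}(\eta)$, which is $\le 1$ by the orthogonality just proved. Lemma~\ref{lem2.1} then forces $x_{i,j}$ to be independent of $j$ and $\sum_{i}x_{i,1}=1$; hence $\sum_{i}Q_{\mu_{k>1},\Lambda_{\pi_{i}}}\!\big(\eta+\tfrac{\pi_{i}}{m}\big)=1$ for \emph{every} choice $\pi_{i}\in\Pi_{1,i}$, i.e.\ $Q_{\mu_{k>1},\Lambda_{\pi_{0},\cdots,\pi_{m^{n-1}-1}}}(\eta)=1$ on this dense open set. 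As $\Lambda_{\pi_{0},\cdots,\pi_{m^{n-1}-1}}$ is an orthogonal set, $Q_{\mu_{k>1},\Lambda_{\pi_{0},\cdots,\pi_{m^{n-1}-1}}}$ is continuous (Lemma~\ref{lem2.2}(iii)), so it equals $1$ on all of $\mathbb R^{n}$, and by Lemma~\ref{lem2.2}(ii), $\Lambda_{\pi_{0},\cdots,\pi_{m^{n-1}-1}}$ is a spectrum of $\mu_{k>1}$.

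The step I expect to be the main obstacle is verifying the hypothesis $\sum_{i}\max_{j}x_{i,j}\le 1$ of Lemma~\ref{lem2.1}: the key observation is that the orthogonality argument applies uniformly to \emph{all} systems of representatives, so the max-attaining representatives $\rho_{i}$ assemble into one more orthogonal set $\Lambda_{\rho_{0},\cdots,\rho_{m^{n-1}-1}}$, which furnishes exactly the required bound. The non-vanishing of $m_{D_{1}}\!\big(\tfrac{\pi-\pi'}{m}\big)$ between distinct classes $\Pi_{1,i}$ (using that every coordinate of $\nu_{1}$ is a unit modulo the prime $m$ and that $H$ has vanishing last coordinate) and the passage from the dense open set to all of $\mathbb R^{n}$ via continuity of $Q$ are the remaining points needing care.
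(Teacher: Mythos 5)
Your proof is correct and follows essentially the same route as the paper: factor $Q_{\mu,\Lambda}$ through \eqref{5.1}, prove orthogonality of $\Lambda_{\pi_{0},\cdots,\pi_{m^{n-1}-1}}$ via the non-vanishing of $m_{D_{1}}$ at differences of representatives from distinct classes $\Pi_{1,i}$, and then apply Lemma \ref{lem2.1}, with the max-attaining representatives assembling into one more orthogonal set that supplies the bound $\sum_{i}\max_{j}x_{i,j}\le 1$. Your explicit restriction to $\eta$ where all weights $|m_{D_{1}}(\eta+\pi/m)|^{2}$ are positive, followed by the extension to all of $\mathbb{R}^{n}$ via Lemma \ref{lem2.2}(iii), is a detail the paper glosses over and is a refinement rather than a different approach.
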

\begin{proof}
	By Lemma \ref{lem2.2}, equations \eqref{5.1}, \eqref{5.5} and  \eqref{5.4}, it follows that for any $ \xi \in \mathbb{R}^{n} $,
\begin{align*}
		1&=\displaystyle{\sum_{\lambda\in \Lambda}}|\widehat{\mu}(\xi+\lambda)|^2=\sum_{\kappa\in\Phi}\sum_{\lambda\in\Lambda_{\kappa}}|\widehat{\mu}(\xi+\kappa+m\lambda)|^{2}\\
		&=\sum_{\kappa\in\Phi}\sum_{\lambda\in\Lambda_{\kappa}}|\widehat{\delta}_{D_{1}}(\dfrac{\xi+\kappa}{m}+\lambda)|^{2}|\widehat{\mu}_{k>1}(\dfrac{\xi+\kappa}{m}+\lambda)|^{2}\\
		&=\sum_{\kappa\in\Phi}|m_{D_{1}}(\dfrac{\xi+\kappa}{m})|^{2}\sum_{\lambda\in\Lambda_{\kappa}}|\widehat{\mu}_{k>1}(\dfrac{\xi+\kappa}{m}+\lambda)|^{2}\\
		&=\sum_{i=0}^{m^{n-1}-1}\sum_{\pi\in\Pi_{1,i}}|m_{D_{1}}(\dfrac{\xi+\pi}{m})|^{2}\sum_{\lambda\in\Lambda_{\pi}}|\widehat{\mu}_{k>1}(\dfrac{\xi+\pi}{m}+\lambda)|^{2}.
	\end{align*}
For $ \pi \in \Phi ,$ we can express $ \rho_{\pi}(\xi)$ as $|m_{D_{1}}(\frac{\xi+\pi}{m})|^{2} $ and define $ \widetilde{\rho}_{\pi}(\xi)$ as $\sum_{\lambda\in\Lambda_{\pi}}|\widehat{\mu}_{>1}(\frac{\xi+\pi}{m}+\lambda)|^{2}.$
	Then
\begin{equation}\label{5.6}
 \sum_{i=0}^{m^{n-1}-1}\sum_{\pi\in\Pi_{1,i}}\rho_{\pi}\widetilde{\rho}_{\pi}=1.
\end{equation}
Based on the definition of \( \Pi_{1,i} \), we can conclude that for each \( i\in\{0,1,\cdots,m^{n-1}-1\} \), $ (\frac{1}{m}D_{1},\Pi_{1,i}) $ is a compatible pair, which implies $ \sum_{\pi\in\Pi_{1,i}}\rho_{\pi}=1. $

For any $i\ne j\in\{0,1,\cdots,m^{n-1}-1\} $, if we select an arbitrary element \( \pi_{i} \) from \( \Pi_{1,i} \) and an arbitrary element \( \pi_{j} \) from \( \Pi_{1,j} \), then there exist $ \omega_{i}\in\Lambda_{\pi_{i}} $ and $ \omega_{j}\in\Lambda_{\pi_{j}} $ such that \begin{equation*}
0=\widehat{\mu}(\pi_{i}+m\omega_{i}-\pi_{j}-m\omega_{j})=m_{D_{1}}(\frac{\pi_{i}}{m}-\frac{\pi_{j}}{m})  \widehat{\mu}_{k>1}(\frac{\pi_{i}}{m}+\omega_{i}-\frac{\pi_{j}}{m}-\omega_{j}).
\end{equation*}
From the definitions of \( \Pi_{1,i} \) and \( \Pi_{1,j} \), one can verify that \( m_{D_{1}}(\frac{\pi_{i}}{m} - \frac{\pi_{j}}{m}) \neq 0 \). In fact, there exist \( \pi_{i_{0}}, \pi_{j_{0}} \in \Pi_{1,0} \) such that
\begin{equation*}
	\pi_{i} - \pi_{j} = \pi_{i_{0}} - \pi_{j_{0}} + (p_{i_{1}} - p_{j_{1}}, p_{i_{2}} - p_{j_{2}}, \cdots, p_{i_{n-1}} - p_{j_{n-1}}, 0)^{t}.
\end{equation*}
For the set \( \{p_{i_{s}} - p_{j_{s}} : s = 1, 2, \cdots, n-1\} \), there exists at least one \( s \) such that \( m \nmid (p_{i_{s}} - p_{j_{s}}) \). If \( (\pi_{i} - \pi_{j}) \in \Pi_{1,0} \setminus \{0\} + m\mathbb{Z}^{n} \), and since \( (\pi_{i_{0}} - \pi_{j_{0}}) \in \Pi_{1,0} \), it follows that
\begin{equation*}
	(p_{i_{1}} - p_{j_{1}}, p_{i_{2}} - p_{j_{2}}, \cdots, p_{i_{n-1}} - p_{j_{n-1}}, 0)^{t} \in \Pi_{1,0} + m\mathbb{Z}^{n},
\end{equation*}
which leads to a contradiction.
Hence \( \widehat{\mu}_{k>1}(\frac{\pi_{i}}{m} + \omega_{i} - \frac{\pi_{j}}{m} - \omega_{j}) = 0 \). Together with Lemma \ref{lem5.1}, this also demonstrates that   \( \Lambda_{\pi_{0}, \cdots, \pi_{m^{n-1}-1}} \) is an orthogonal set of \( \mu_{k>1} \).
By Lemma \ref{lem2.2}, we get
\begin{equation*}
\sum_{i=0}^{m^{n-1}-1} \sum_{\lambda \in \Lambda_{\pi_{i}}} |\widehat{\mu}_{k>1}(\frac{\pi_{i}}{m} + \xi + \lambda)|^{2} \le 1, \quad \xi \in \mathbb{R}^{n},
\end{equation*}
which implies that
\begin{equation*}
\sum_{i=0}^{m^{n-1}-1} \max_{\pi \in \Pi_{1,i}} \widetilde{\rho}_{\pi} \le 1.
\end{equation*}
Using \eqref{5.6} and Lemma \ref{lem2.1}, for any $ \pi_{i} \in \Pi_{1,i}$, we have $ \widetilde{\rho}_{\pi_{i}}=\max\{\widetilde{\rho}_{\pi}:\pi\in\Pi_{1,i}\}:=\widetilde{\rho}_{i} $ and
\begin{equation*}
 1=\sum_{i=0}^{m^{n-1}-1}\widetilde{\rho}_{i}=\sum_{i=0}^{m^{n-1}-1}\sum_{\lambda\in\Lambda_{\pi_{i}}}|\widehat{\mu}_{k>1}(\dfrac{\xi+\pi_{i}}{m}+\lambda)|^{2}=\sum_{\lambda\in\Lambda_{\pi_{0},\cdots,\pi_{ m^{n-1}-1}}} |\widehat{\mu}_{k>1}(\dfrac{\xi}{m}+\lambda)|^{2}.
\end{equation*}
Consequently, according to Lemma \ref{2.2}, $ \Lambda_{\pi_{0},\cdots,\pi_{ m^{n-1}-1}} $ is a spectrum of \( \mu_{k>1} \).
\end{proof}
\medskip
From the proof of Lemma \ref{lem5.2}, we know that \( \widetilde{\rho}_{0} = \widetilde{\rho}_{\nu_{1,0}^{(1)}} = \widetilde{\rho}_{\nu_{1,0}^{(m-1)}} \) and \( \widetilde{\rho}_{0} > 0 \). Therefore, \( \Lambda_{\nu_{1,0}^{(1)}} \) and \( \Lambda_{\nu_{1,0}^{(m-1)}} \) are non-empty sets. It follows that there exist $ z_{1} $, $ z_{2} \in \mathbb{Z}^{n}$ such that $ \nu_{1}+mz_{1} $ and $ -\nu_{1}+mz_{2} $ lie in $ \Lambda $. In the previous discussion, if we replace \( \mu \) with \( \mu_{\{diag[m,\cdots,m]\}_{k>l}} \), we can derive the following lemma.
\begin{lemma}\label{lem5.3}
	Let $ \Lambda $ be a spectrum of \( \mu_{\{diag[m,\cdots,m]\}_{k>l}} \) with $ 0\in\Lambda $. Then there exist $ z_{1} $ and $ z_{2} \in \mathbb{Z}^{n}$ such that both $ \nu_{l+1}+mz_{1} $ and $ -\nu_{l+1}+mz_{2} $ lie in $ \Lambda $.
\end{lemma}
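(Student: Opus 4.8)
The plan is to apply, essentially verbatim, the reasoning of Lemmas \ref{lem5.1} and \ref{lem5.2} (and the remark following Lemma \ref{lem5.2}) to the tail measure $\widetilde{\mu}:=\mu_{\{\text{diag}[m,\cdots,m]\}_{k>l}}$ in place of $\mu$. The key observation is that $\widetilde{\mu}$ is itself a measure of exactly the form treated in Section 5: by \eqref{2.2} its leading contraction matrix is $\text{diag}[m,\cdots,m]$, and since $\phi\equiv1$ its leading digit set $D_{l+1}$ satisfies $\mathcal{Z}(m_{D_{l+1}})=\bigcup_{j=1}^{m-1}\bigl(\frac{j}{m}\nu_{l+1}+\mathbb{Z}^{n}\bigr)$. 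Moreover, arguing as in \eqref{5.1},
\[
\widetilde{\mu}=\delta_{\frac{1}{m}D_{l+1}}*(\mu_{k>l+1}\circ m),\qquad \widehat{\widetilde{\mu}}(\xi)=m_{D_{l+1}}\bigl(\tfrac{\xi}{m}\bigr)\,\widehat{\mu}_{k>l+1}\bigl(\tfrac{\xi}{m}\bigr),
\]
and, as in \eqref{5.2}, $\mathcal{Z}(\widehat{\widetilde{\mu}})\subset\mathbb{Z}^{n}$, so any spectrum $\Lambda$ of $\widetilde{\mu}$ with $0\in\Lambda$ satisfies $\Lambda\subset\mathbb{Z}^{n}$.

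First I would decompose $\Lambda=\bigcup_{\kappa\in\Phi}(\kappa+m\Lambda_{\kappa})$ with $\Lambda_{\kappa}=\{\omega\in\mathbb{Z}^{n}:\kappa+m\omega\in\Lambda\}$, and build the blocks $\Pi_{l+1,i}$ from $\nu_{l+1}$ exactly as $\Pi_{k,i}$ was built from $\nu_{k}$; in particular $\Pi_{l+1,0}=\{0,\nu_{l+1,0}^{(1)},\cdots,\nu_{l+1,0}^{(m-1)}\}$ with $\nu_{l+1,0}^{(j)}\equiv j\nu_{l+1}\pmod{m\mathbb{Z}^{n}}$. Writing $\rho_{\pi}(\xi)=\bigl|m_{D_{l+1}}(\tfrac{\xi+\pi}{m})\bigr|^{2}$ and $\widetilde{\rho}_{\pi}(\xi)=\sum_{\lambda\in\Lambda_{\pi}}\bigl|\widehat{\mu}_{k>l+1}(\tfrac{\xi+\pi}{m}+\lambda)\bigr|^{2}$, the computation in the proof of Lemma \ref{lem5.2} yields $\sum_{i}\sum_{\pi\in\Pi_{l+1,i}}\rho_{\pi}\widetilde{\rho}_{\pi}\equiv1$; the compatible pair $(\tfrac{1}{m}D_{l+1},\Pi_{l+1,i})$ gives $\sum_{\pi\in\Pi_{l+1,i}}\rho_{\pi}\equiv1$; and Lemma \ref{lem5.1} together with Lemma \ref{lem2.2}(i) applied to $\mu_{k>l+1}$ gives $\sum_{i}\max_{\pi\in\Pi_{l+1,i}}\widetilde{\rho}_{\pi}\le1$. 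Lemma \ref{lem2.1} then forces $\widetilde{\rho}_{\pi}$ to be constant on each block; in particular $\widetilde{\rho}_{0}\equiv\widetilde{\rho}_{\nu_{l+1,0}^{(1)}}\equiv\cdots\equiv\widetilde{\rho}_{\nu_{l+1,0}^{(m-1)}}$.

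To conclude, $0\in\Lambda$ gives $0\in\Lambda_{0}$, so $\widetilde{\rho}_{0}(0)\ge|\widehat{\mu}_{k>l+1}(0)|^{2}=1>0$, whence $\Lambda_{\nu_{l+1,0}^{(1)}}$ and $\Lambda_{\nu_{l+1,0}^{(m-1)}}$ are non-empty. Since $\nu_{l+1,0}^{(1)}\equiv\nu_{l+1}$ and $\nu_{l+1,0}^{(m-1)}\equiv(m-1)\nu_{l+1}\equiv-\nu_{l+1}\pmod{m\mathbb{Z}^{n}}$, choosing $\omega\in\Lambda_{\nu_{l+1,0}^{(1)}}$ and $\omega'\in\Lambda_{\nu_{l+1,0}^{(m-1)}}$ produces $z_{1},z_{2}\in\mathbb{Z}^{n}$ with $\nu_{l+1}+mz_{1}=\nu_{l+1,0}^{(1)}+m\omega\in\Lambda$ and $-\nu_{l+1}+mz_{2}=\nu_{l+1,0}^{(m-1)}+m\omega'\in\Lambda$.

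I do not expect a genuine obstacle: the whole content is that $\mu_{\{\text{diag}[m,\cdots,m]\}_{k>l}}$ has exactly the structure handled in Section 5, so each step of Lemmas \ref{lem5.1}--\ref{lem5.2} transfers word for word under the relabelling $1\leftrightarrow l+1$. The only points deserving a line of comment are that the structural hypotheses ($\phi\equiv1$, leading matrix $\text{diag}[m,\cdots,m]$) survive passing to the tail (immediate from \eqref{2.2}) and the strict positivity $\widetilde{\rho}_{0}(0)>0$ (immediate from $0\in\Lambda$). For this reason it is legitimate to write the proof in one line, namely ``apply the reasoning of Lemmas \ref{lem5.1} and \ref{lem5.2} to $\mu_{\{\text{diag}[m,\cdots,m]\}_{k>l}}$ and read off that $\Lambda_{\nu_{l+1,0}^{(1)}}$ and $\Lambda_{\nu_{l+1,0}^{(m-1)}}$ are non-empty.''
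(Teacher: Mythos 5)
Your proposal is correct and follows essentially the same route as the paper: the paper likewise derives Lemma \ref{lem5.3} by noting (from the proof of Lemma \ref{lem5.2}) that $\widetilde{\rho}_{0}=\widetilde{\rho}_{\nu_{1,0}^{(1)}}=\widetilde{\rho}_{\nu_{1,0}^{(m-1)}}>0$, so that $\Lambda_{\nu_{1,0}^{(1)}}$ and $\Lambda_{\nu_{1,0}^{(m-1)}}$ are non-empty, and then replaces $\mu$ by the tail measure $\mu_{\{\mathrm{diag}[m,\cdots,m]\}_{k>l}}$. Your added remarks on why the tail measure retains the structural hypotheses and on the positivity $\widetilde{\rho}_{0}(0)\ge|\widehat{\mu}_{k>l+1}(0)|^{2}=1$ are exactly the two points that make the relabelling legitimate.
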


In the following, we give the proof of Theorem \ref{thm1.3}.
\begin{proof}[\textbf{ Proof of Theorem \ref{thm1.3} }]By Lemma \ref{lem2.6}, without loss of generality, we can assume that $ R_{1}=\text{diag}[m,\cdots,m] $. Let $ 0\in\Lambda $ be a spectrum of $ \mu $, then $ \Lambda\subset \mathbb{Z}^{n} $. For $i\in\{1,2,\cdots,m^{n-1}-1\}$, we can choose $ \pi_{i}=\nu_{1,i}^{(1)} $ and set $\pi_{0}=\nu_{1,0}^{(0)}=0 $. By Lemma \ref{lem5.2}, it follows that $  0\in\Lambda_{\pi_{0},\cdots,\pi_{m^{n-1}-1}} $ is a spectrum of $ \mu_{k>1} $. Then, from Lemma \ref{lem2.6}, we have $ m{R_{2}^{t}}^{-1}\Lambda_{\pi_{0},\cdots,\pi_{m^{n-1}-1}} $ as a spectrum of $ \mu_{\{R_{1}\}_{k>1}} $. According to Lemma \ref{lem5.3}, there exist $ z_{1},z_{2}\in\mathbb{Z}^{n} $ such that
\begin{equation*}
 \dfrac{1}{m}R_{2}^{t}\nu_{2}+R_{2}^{t}z_{1},\;-\dfrac{1}{m}R_{2}^{t}\nu_{2}+R_{2}^{t}z_{2}\in \Lambda_{\pi_{0},\cdots,\pi_{m^{n-1}-1}}=\cup_{i=0}^{m^{n-1}-1}(\frac{1}{m}\pi_{ i}+\Lambda_{\pi_{ i}}).
\end{equation*}
Suppose that $ \frac{1}{m}R_{2}^{t}\nu_{2}\notin\mathbb{Z}^{n} $. Then $\frac{1}{m}R_{2}^{t}\nu_{2}+R_{2}^{t}z_{1},\;-\frac{1}{m}R_{2}^{t}\nu_{2}+R_{2}^{t}z_{2}\notin \Lambda_{\pi_{0}}=\Lambda_{0} $. If there exists $ i\in\{1,2,\cdots,m^{n-1}-1\} $ such that both $ \frac{1}{m}R_{2}^{t}\nu_{2}+R_{2}^{t}z_{1} $ and $ -\frac{1}{m}R_{2}^{t}\nu_{2}+R_{2}^{t}z_{2} $ belong to $ (\frac{1}{m}\pi_{i}+\Lambda_{\pi_{i}}) $, then it follows that $ \pm\frac{1}{m}R_{2}^{t}\nu_{2}-\frac{1}{m}\pi_{ i}\in\mathbb{Z}^{n} $. Consequently, this implies $ -\frac{2}{m}\pi_{ i}=-\frac{2}{m}\nu_{1,i}^{(1)}\in\mathbb{Z}^{n} $, which is impossible. If there exist $ i\neq j\in\{1,2,\cdots,m^{n-1}-1\} $ such that
\begin{equation*}
 \frac{1}{m}R_{2}^{t}\nu_{2}+R_{2}^{t}z_{1}\in\frac{1}{m}\pi_{i}+\Lambda_{\pi_{ i}}  \;\;\text{and}\;\;   -\frac{1}{m}R_{2}^{t}\nu_{2}+R_{2}^{t}z_{2}\in\frac{1}{m}\pi_{j}+\Lambda_{\pi_{j}} .
\end{equation*}
Then,
  \begin{equation*}
 -\frac{1}{m}R_{2}^{t}\nu_{2}-\frac{1}{m}\pi_{j}\in\mathbb{Z}^{n}   \;\;\text{and}\;\;   \frac{1}{m}R_{2}^{t}\nu_{2}-\frac{1}{m}\pi_{i}\in\mathbb{Z}^{n},
\end{equation*}
implies that $ -\frac{1}{m}(\pi_{i}+\pi_{j})=-\frac{1}{m}(\nu_{1,i}^{(1)}+\nu_{1,j}^{(1)})\in\mathbb{Z}^{n} $, which is impossible. Hence, $ \frac{1}{m}R_{2}^{t}\nu_{2}\in\mathbb{Z}^{n} $.
	
Based on the above discussion and the previous lemma, we can replace \( \mu \) with \(\mu_{k>1} \) to obtain $ \frac{1}{m}{R_{3}}^{t}\nu_{3}\in\mathbb{Z}^{n} $. Then, by continuing this iteration, we can establish that $ \frac{1}{m}{R_{k}}^{t}\nu_{k}\in\mathbb{Z}^{n} $ for $k\ge4$. Hence, for $k\ge2$, $ {R_{k}}^{t}\nu_{k}\in m\mathbb{Z}^{n} $. At this point, we have completed the proof of necessity, and sufficiency follows directly from Theorem \ref{thm1.4}.
\end{proof}
Next, we will prove Corollary \ref{cor1.5} and Corollary \ref{cor1.6}.
 \begin{proof}[\textbf{Proof of Corollary \ref{cor1.5}}]
	Without loss of generality, we can assume $R_{1}=diag[m,\cdots,m]$ by Lemma \ref{lem2.6}. Regarding necessity, it follows from Theorem \ref{thm1.3} that \( R_{k}^{t}\nu_{k} \in m\mathbb{Z}^{n} \) for \( k \ge 2 \). Denote $\nu_{k}=(\tilde{\nu}_{k,1},\tilde{\nu}_{k,2},\cdots,\tilde{\nu}_{k,n})^{t} .$ Then
	\begin{equation*}
	R_{k}^{t}\nu_{k}= (\tilde{\nu}_{k,1}a_{1}^{(k)},\sum_{i=1}^{2}\tilde{\nu}_{k,i}a_{i}^{(k)},\cdots,\sum_{i=1}^{n}\tilde{\nu}_{k,i}a_{i}^{(k)})^{t}\in m\mathbb{Z}^{n}.
	\end{equation*}
Since  \( m \) is a prime, we conclude that \( m\mid a_{j}^{(k)} \) for \( k \ge2 \), \( j \in\{1,\cdots,n\} \). Next,  we prove the sufficiency. A simple calculation gives
 \begin{equation*}
 	R_{k}^{t^{-1}}:=\begin{bmatrix}
 		{\frac{1}{a_{1}^{(k)}}} & {0}& {\cdots} &{0}& {0}\\
 		{-\frac{1}{a_{2}^{(k)}}} & {\frac{1}{a_{2}^{(k)}}} & {\cdots} &{0}& {0}\\
 		{\vdots} & {\vdots} &{\ddots} & {\vdots}& {\vdots}\\
 		{0} & {0} & {\cdots} &{\frac{1}{a_{n-1}^{(k)}}} & {0}\\
 		{0} & {0} &{\cdots} &{-\frac{1}{a_{n}^{(k)}}}& {\frac{1}{a_{n}^{(k)}}}\\
 	\end{bmatrix}
 	\quad\text{for}\;k\ge2.
 \end{equation*}
Then  \(\|R_{k}^{t^{-1}}\|^{'} = \max\{\|R_{k}^{t^{-1}}x\| : \|x\| = 1\} < 1\) by

\begin{align*}
	\|R_{k}^{t^{-1}}x\| &= \sqrt{(\frac{x_{1}}{a_{1}^{(k)}})^{2} + (\frac{x_{2}-x_{1}}{a_{2}^{(k)}})^{2} + (\frac{x_{3}-x_{2}}{a_{3}^{(k)}})^{2} + \cdots + (\frac{x_{n}-x_{n-1}}{a_{n}^{(k)}})^{2}} \\
	&\le \frac{1}{m} \sqrt{\sum_{i=1}^{n} x_{i}^{2} + \sum_{i=1}^{n-1} x_{i}^{2} - \sum_{i=1}^{n-1} x_{i} x_{i+1}} \\
	&\le \frac{1}{m} \sqrt{2 + 2\sqrt{1 - x_{n}^{2}}\sqrt{1 - x_{1}^{2}}} \\
	&\le \frac{2}{m} < 1,
\end{align*}
where \(x = (x_{1}, x_{2}, \cdots, x_{n})^{t}\) with \(\|x\| = 1\). Therefore, \( \limsup_{k \to \infty} \| R_{k}^{-1} \|^{'} \leq r < 1 \). Additionally, since \( R_{k} \in M_{n}(m\mathbb{Z}) \), it follows that \( \frac{1}{m} R_{k}^{t}\nu_{k}  \in \mathbb{Z}^{n} \). Furthermore, by choosing \( \delta = \frac{1}{8} \) and \( \beta = \frac{1}{8m} \), we obtain
\begin{equation}\label{5.8}
	R_{k}^{t^{-1}} [-\frac{5}{8}, \frac{5}{8}] \subset [-\frac{5}{8a_{1}^{(k)}}, \frac{5}{8a_{1}^{(k)}}] \times \mathbb{R}^{n-1} \subset [-\frac{5}{8m}, \frac{5}{8m}] \times \mathbb{R}^{n-1}
\end{equation}
for all \( k > 1 \). However, $[-\frac{5}{8m}, \frac{5}{8m}]\cap (\frac{\{1,2,\cdots,m-1\}}{m}+\mathbb{Z})_{\frac{1}{8m}}=\emptyset $. Together with \eqref{5.8}, this means that for any \( k > 1 \) and \( i \geq 0 \),
\begin{equation*}
R_{k}^{t} R_{k+1}^{t} \cdots R_{k+i}^{t} \in \mathcal{A}_{\frac{1}{8}, \frac{1}{8m}}.
\end{equation*}
Therefore, based on Theorem \ref{thm1.3}, we can establish sufficiency.
\end{proof}
%\begin{remark}
%In the above inference, if \( R_k \) takes the following form:
%\begin{equation*}
%\begin{bmatrix}
%	a_{1}^{(k)} & a_{2}^{(k)} & \cdots & a_{n}^{(k)} \\
%	0 & a_{2}^{(k)} & \cdots & a_{n}^{(k)} \\
%	\vdots & \vdots & \ddots & \vdots \\
%	0 & 0 & \cdots & a_{n}^{(k)}
%\end{bmatrix},
%\quad
%\begin{bmatrix}
%	a_{1}^{(k)} & 0 & \cdots & 0 \\
%	a_{1}^{(k)} & a_{2}^{(k)} & \cdots & 0\\
%	\vdots & \vdots & \ddots & \vdots \\
%	a_{1}^{(k)} & a_{2}^{(k)} & \cdots & a_{n}^{(k)}
%\end{bmatrix}
%\; \text{and} \;
%\begin{bmatrix}
%	a_{1}^{(k)} & 0 & \cdots & 0 \\
%	a_{2}^{(k)}& a_{2}^{(k)} & \cdots & 0 \\
%	\vdots & \vdots & \ddots & \vdots \\
%	a_{n}^{(k)} & a_{n}^{(k)} & \cdots & a_{n}^{(k)}
%\end{bmatrix},
%\end{equation*}
%the same conclusion can also be reached.
%\end{remark}
 \begin{proof}[\textbf{Proof of Corollary \ref{cor1.6}}]
According to Theorem \ref{thm1.3}, we only need to prove that \( \mathcal{Z}(m_{D_{k}}) \) satisfies \eqref{eq1.2}. For $k\ge1$, $  \mathcal{Z}(m_{D_{k}}) =0$ if and only if
\begin{equation*}
\begin{cases}
		x_{1}=\dfrac{d_{k}-2b_{k}+k_{1}}{3(a_{k}d_{k}-c_{k}b_{k})}\\
		x_{2}=\dfrac{c_{k}-2a_{k}+k_{2}}{3(c_{k}b_{k}-a_{k}d_{k})}
\end{cases}\;\;\text{or}\;\;
\begin{cases}
	x_{1}=\dfrac{2d_{k}-b_{k}+k_{1}^{'}}{3(a_{k}d_{k}-c_{k}b_{k})}\\
	x_{2}=\dfrac{2c_{k}-a_{k}+k_{2}^{'}}{3(c_{k}b_{k}-a_{k}d_{k})}
\end{cases}
\;\;\text{for}\;\; k_{1}^{'},k_{2}^{'},k_{2},k_{1}\in\mathbb{Z}.
\end{equation*}
Since $|a_{k}d_{k}-c_{k}b_{k}|=1,$  then
\begin{equation*}
	\begin{cases}
		x_{1}=\pm\dfrac{d_{k}-2b_{k}+k_{1}}{3}\\
		x_{2}=\mp\dfrac{c_{k}-2a_{k}+k_{2}}{3}
	\end{cases}\;\;\text{or}\;\;
	\begin{cases}
		x_{1}=\pm\dfrac{2d_{k}-b_{k}+k_{1}^{'}}{3}\\
		x_{2}=\mp\dfrac{2c_{k}-a_{k}+k_{2}^{'}}{3}
	\end{cases}
	\;\;\text{for}\;\; k_{1}^{'},k_{2}^{'},k_{2},k_{1}\in\mathbb{Z}.
\end{equation*}
Additionally, from the assumed conditions, we get
\begin{equation*}
	\begin{cases}
	(2b_{k}-d_{k})=(c_{k}-2a_{k})\pmod3,\;(2c_{k}-a_{k})=(b_{k}-2d_{k})\pmod3 &\quad\text{if}\;D_{k}\in\Gamma_{1}\\
(d_{k}-2b_{k})=(c_{k}-2a_{k})\pmod3,\;(2c_{k}-a_{k})=(2d_{k}-b_{k})\pmod3 &\quad\text{if}\;D_{k}\in\Gamma_{2}
	\end{cases}
\end{equation*}
and $\{1,2\}\equiv\{(2b_{k}-d_{k}),(2c_{k}-a_{k})\}\equiv\{(d_{k}-2b_{k}),(2c_{k}-a_{k})\}\pmod3 $. This means that if \( D_{k}\in\Gamma_{1} \), then \( \mathcal{Z}(m_{D_{k}})=\{\frac{1}{3}(1,1)^{t}+\mathbb{Z}^{2}\}\cup\{\frac{2}{3}(1,1)^{t}+\mathbb{Z}^{2}\} \); if \( D_{k}\in\Gamma_{2} \), then \( \mathcal{Z}(m_{D_{k}}) =\{\frac{1}{3}(1,2)^{t}+\mathbb{Z}^{2}\}\cup\{\frac{2}{3}(1,2)^{t}+\mathbb{Z}^{2}\} \). Therefore, \eqref{eq1.2} is satisfied, and the proof is complete.
\end{proof}
In the final part of this section, we provide several examples to illustrate our theorem. First, we reference two examples from \cite{LLZ} to demonstrate that the condition \eqref{1.2} in Theorem \ref{thm1.4} and Theorem \ref{thm1.2} is non-removable.
\begin{exam}\label{exam1}
Let $R=\text{diag}[9,9]$ and $D_{k}=\{(0,0)^{t},(1,0)^{t},(2^{2k},1)^{t}\}$ for all $k\ge1$. Then the associated Moran-Sierpinski type measure $\mu_{\{R\}, \{D_{k}\}}$ with compact support is not a spectral measure.
\end{exam}
We can easily calculate that for each \( k \ge 1 \), \( \mathcal{Z}(m_{D_{k}}) = \pm\frac{1}{3}(1,1)^{t} + \mathbb{Z}^{2} \). Clearly, this example satisfies condition \eqref{a}, and \( \frac{1}{3} R^{t}\nu_{k}  \subset \mathbb{Z}^{2} \), where \( \nu_{k} = (1,1)^{t} \). It is also straightforward to see that \( \sup\{\|d\| : d \in D_{k}, k \ge 1\} = \infty \) in this example. However, using a proof method similar to that in \cite[Proposition 5.1]{LLZ}, we can conclude that \( \mu_{\{R\}, \{D_{k}\}} \) is not a spectral measure. The following example implies that \( \limsup_{k\rightarrow\infty} \| R_{k}^{-1}\|^{'} \leq r < 1 \) is non-removable.
\begin{exam}\label{exam2}
Let $p\in3\mathbb{Z}$, and integers $\{a_{k}\}_{k=1}^{\infty}$ be a sequence with $\sum_{i=1}^{k}a_{k}=-2^{k}p$ for all $k\ge1.$ Suppose that
\begin{equation*}
	R_{k}=\begin{bmatrix}
		p & a_{k} \\
		0     & p
	\end{bmatrix}, \; D=\{(0,0)^{t}, (1,0)^{t}, (0,1)^{t}\}
\end{equation*}
for all $k\ge1$. Then the associated Moran-Sierpinski type measure $\mu_{\{R_{k}\}, \{D\}}$ with compact support is not a spectral measure.
\end{exam}

\begin{exam}\label{exam3}
	Let $ R_{k}=\text{diag}[p_{k,1},p_{k,2}]\subset M_{2}(\mathbb{Z})$ and $D_{k}\in\{B_{1},B_{2},B_{3}\}$ for all $k\ge1$, where
\begin{align*}
	\begin{cases}
		B_{1}=\{(0,0)^{t}, (1,0)^{t}, (0,1)^{t}, (1,1)^{t}, (3,3)^{t}\}\\
		B_{2}=\{(0,0)^{t}, (1,0)^{t}, (0,-1)^{t}, (1,-1)^{t}, (3,-3)^{t}\}\\
		B_{3}=\{(0,0)^{t}, (1,0)^{t}, (1,1)^{t}, (2,1)^{t}, (2,2)^{t}\}.
	\end{cases}
\end{align*}
Then $ \mu_{\{R_{k}\},\{D_{k}\}} $ is a spectral measure if and only if \( 5 \mid p_{k,i} \) for \( i = 1, 2\) and \( k \ge 2 \).
\end{exam}
\begin{proof}
	Let $\xi=(\xi_{1},\xi_{2})^{t}\in[0,1)^{2}$, then $\mathcal{Z}(m_{B_{1}})=0$  implies that
	\begin{equation*}
		\cos\pi(\xi_{1}-\xi_{2})+\cos\pi(\xi_{1}+\xi_{2})=-\frac{1}{2}e^{5\pi i (\xi_{1}+\xi_{2})}.
	\end{equation*}
	In that case, there is definitely $ 5(\xi_{1}+\xi_{2})\in\{0,1,2,\cdots,9\} .$ If $  5(\xi_{1}+\xi_{2})\in\{0,1,2,5,8,9\}$, then it has no solution in $[0, 1)^2$.
	When $  5(\xi_{1}+\xi_{2})=3, 4, 6, 7$, we get
	\begin{center}
		\begin{tabular}{|Sc|Sc|Sc|Sc|Sc|}
			\hline
			$\xi_1+\xi_2$ & $\xi_{1}+\xi_{2}=\frac{3}{5}$     &$\xi_{1}+\xi_{2}=\frac{4}{5}$     & $\xi_{1}+\xi_{2}=\frac{6}{5}$     & $\xi_{1}+\xi_{2}=\frac{7}{5}$     \\  \hline
			$(\xi_1,\xi_2)$ &$(\frac{1}{5}, \frac{2}{5})^t, (\frac{2}{5}, \frac{1}{5})^t$ & $(\frac{1}{5}, \frac{3}{5})^t, (\frac{3}{5}, \frac{1}{5})^t$ & $(\frac{2}{5}, \frac{4}{5})^t, (\frac{4}{5}, \frac{2}{5})^t$ & $(\frac{3}{5}, \frac{4}{5})^t, (\frac{4}{5}, \frac{3}{5})^t$ \\
			\hline
		\end{tabular}
	\end{center}
	Hence,
\begin{equation*}
	\mathcal{Z}(m_{B_{1}})=(\cup_{j=1}^{4}(\frac{j}{5}(1,2)^{t}+\mathbb{Z}^{2}))\cup(\cup_{j=1}^{4}(\frac{j}{5}(1,3)^{t}+\mathbb{Z}^{2})).
\end{equation*}
For the digit set $B_{2}$, 	let $\xi=(\xi_{1},\xi_{2})^{t}\in[0,1)^{2}$. Then, if $\mathcal{Z}(m_{B_{1}})=0$, it  implies that
\begin{equation*}
	\cos\pi(\xi_{1}+\xi_{2})+\cos\pi(\xi_{2}-\xi_{1})=-\frac{1}{2}e^{5\pi i (\xi_{2}-\xi_{1})}.
\end{equation*}
In this case, we have \( 5(\xi_{2} - \xi_{1}) \in \{0, \pm 1, \pm 2, \pm 3, \pm 4\} \). If \( 5(\xi_{2} - \xi_{1}) \in \{0, \pm 3, \pm 4\} \), then there are no solutions in \( [0, 1)^{2} \). When \( 5(\xi_{2} - \xi_{1}) = \pm 1 \) or \( \pm 2 \), we obtain the following results:
\begin{center}
	\begin{tabular}{|Sc|Sc|Sc|Sc|Sc|}
		\hline
		$\xi_2-\xi_1$ & $\xi_{2}-\xi_{1}=-\frac{1}{5}$     &$\xi_{2}-\xi_{1}=\frac{1}{5}$     & $\xi_{2}-\xi_{1}=-\frac{2}{5}$     & $\xi_{2}-\xi_{1}=\frac{2}{5}$     \\  \hline
		$(\xi_1,\xi_2)$ &$(\frac{2}{5}, \frac{1}{5})^t, (\frac{4}{5}, \frac{3}{5})^t$ & $(\frac{1}{5}, \frac{2}{5})^t, (\frac{3}{5}, \frac{4}{5})^t$ & $(\frac{3}{5}, \frac{1}{5})^t, (\frac{4}{5}, \frac{2}{5})^t$ & $(\frac{1}{5}, \frac{3}{5})^t, (\frac{2}{5}, \frac{4}{5})^t$ \\
		\hline
	\end{tabular}
\end{center}
Therefore, we obtain that
\begin{equation*}
	\mathcal{Z}(m_{B_{2}})=((1,
			2)^{t}+\mathbb{Z}^{2}))\cup(\cup_{j=1}^{4}(\frac{j}{5}(1,
			3)^{t}+\mathbb{Z}^{2})).
\end{equation*}
Regarding $m_{B_{3}}(\xi)=0$, a simple calculation shows that:
 \begin{equation*}
	-1=(1+e^{-2\pi i(\xi_{1}+\xi_{2})})(e^{-2\pi i\xi_{1}}+e^{-2\pi i(\xi_{1}+\xi_{2})}).
\end{equation*}
By further examining the argument and modulus on both sides of the equation, we can deduce that \( \cos(\pi \xi_{2}) \cos(\pi (\xi_{1} + \xi_{2})) = \frac{1}{4} \) and \( 3\xi_{1} + 2\xi_{2} \in 2\mathbb{Z} + 1 \). Thus,
\begin{equation*}
	\mathcal{Z}(m_{B_{3}})=\cup_{j=1}^{4}(\frac{j}{5}(1,
	1)^{t}+\mathbb{Z}^{2}).
\end{equation*}
Based on the above calculation, it is found that \( B_{1} \), \( B_{2} \) and \( B_{3} \) all satisfy the equation \eqref{eq1.2}. The proof is then completed directly using Theorem \ref{thm1.2}.
\end{proof}
Let $R_{1}=\text{diag}[5,5] $ and $R_{k}=\text{diag}[10,5] $ for all $k\ge2,$ \[ B=\{(0,0)^{t}, (1,0)^{t}, (1,1)^{t}, (2,1)^{t}, (2,2)^{t}\}.\]
From \ref{exam3}, \( \mu_{\{R_{k}\}, \{B\}} \) is a spectral measure. Note that \( T(\{R_{k}\}, B) \subset [0, 1]^2 \), as shown in Figs. \ref{Fractal-1}, \ref{Fractal-2} and \ref{Fractal-3}, which also represent the first three approximations of the Sierpinski fractal.

\begin{figure}[ht]
	\centering
	% 第一张图片
	\begin{minipage}{0.30\textwidth}
		\centering
		\includegraphics[width=\textwidth]{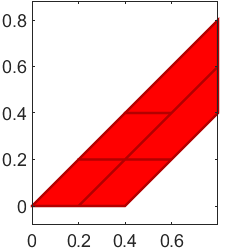}
		\caption{}
		\label{Fractal-1}
	\end{minipage}
	\hfill % 填充水平间距
	% 第二张图片
	\begin{minipage}{0.30\textwidth}
		\centering
		\includegraphics[width=\textwidth]{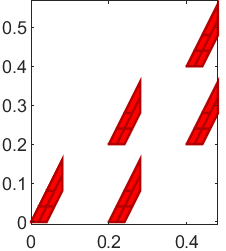}
		\caption{}
		\label{Fractal-2}
	\end{minipage}
	\hfill % 填充水平间距
	% 第二张图片
	\begin{minipage}{0.30\textwidth}
		\centering
		\includegraphics[width=\textwidth]{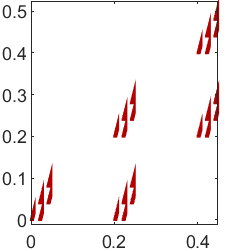}
		\caption{}
		\label{Fractal-3}
	\end{minipage}
\end{figure}
\medspace
\begin{exam}\label{exam4}
	Let $ 	R_{k}=\begin{bmatrix}
	a_{k} & a_{k} \\
	0     & b_{k}
\end{bmatrix}\subset M_{2}(\mathbb{Z})$ and $D_{k}\in\{B_{1},B_{2}\}$ for all $k\ge1$, where
\begin{equation*}
	B_{1}=\{(0,0)^{t}, (1,2)^{t}, (1,3)^{t}\}
	,\;  B_{2}=\{(0,0)^{t}, (2,3)^{t}, (3,5)^{t}\}.
\end{equation*}
Then $ \mu_{\{R_{k}\},\{D_{k}\}} $ is a spectral measure if and only if \(R_{k}\in M_{2}(3\mathbb{Z})\) for \( k \ge 2 \).
\end{exam}
\begin{proof}
	Direct verification of Corollary \ref{cor1.5} is sufficient to prove the result; therefore, the details are omitted.
\end{proof}
Finally, a natural question arises as follows.
\begin{que}
When \( \phi(k) \neq 1 \) in \eqref{eq1.2}, is the conclusion of Theorem \ref{thm1.4} both necessary and sufficient?
\end{que}

\section*{Acknowledgements}
\thanks{The authors thank Ming Liang Chen for valuable suggestions on this work. The research is supported in part by the NSFC (No.12371072).}

\textbf{Competing interests.} The authors declare that they have no competing interests.

\textbf{Data availability.}
Data availability is not applicable to this article as no new data were created or analyzed in this study.

\end{document}